\newtheorem{theorem}{Theorem}[section]
\newtheorem{fact}[theorem]{Fact}
\newtheorem{lemma}[theorem]{Lemma}
\newtheorem{corollary}[theorem]{Corollary}
\newtheorem{claim}[theorem]{Claim}
\newtheorem{proposition}[theorem]{Proposition}
\newtheorem{question}[theorem]{Question}
\newtheorem{introtheorem}{Theorem}
\theoremstyle{definition}
\newtheorem{definition}[theorem]{Definition}
\newtheorem{example}[theorem]{Example}
\newtheorem{non-example}[theorem]{Non-Example}
\newtheorem{notation}[theorem]{Notation}
\newtheorem{assumption}[theorem]{Assumption}
\theoremstyle{remark}
\newtheorem{remark}[theorem]{Remark}
\newcommand{\acl}{\operatorname{acl}}
\newcommand{\tp}{\operatorname{tp}}
\begin{document}

\title{Reconstructing Abelian Varieties via Model Theory}
\author{Benjamin Castle and Assaf Hasson}
\address{Department of Mathematics, University of Illinois Urbana-Champaign}
\email{btcastl2@illinois.edu}
\address{Department of Mathematics, Ben Gurion University of the Negev, Be'er Sehva, Israel}
\email{hassonas@math.bgu.ac.il}
\subjclass{Primary 03C45; Secondary 14K12}

\begin{abstract}
    In 2012, Zilber used model-theoretic techniques to show that a curve of high genus over an algebraically closed field is determined by its Jacobian (viewed only as an abstract group with a distinguished subset for an image of the curve). In this paper, we consider an analogous problem for arbitrary (semi)abelian varieties $A$ over algebraically closed fields $K$ with a distinguished subvariety $V$. Our main result characterizes when the data $(A(K),+,V(K))$ (as a group with distinguished subset) determines the pair $(A,V)$ in the strongest reasonable sense. 
    
    As it turns out, the situation is best understood by developing a theory of \textit{factorizations} for such pairs $(A,V)$. In the final sections of the paper, we develop such a theory and prove unique factorization theorems (one for abelian varieties and a weaker one for semi-abelian varieties). In this language, the main theorem mentioned above (in the abelian case) says that the pair $(A,V)$ is determined by the data $(A(K),+,V(K))$ precisely when $(A,V)$ is simple and $0<\dim(V)<\dim(A)$.
\end{abstract}

\maketitle

\section{Introduction}
\subsection{History and Motivation}
	In \cite{ZilJac}, Zilber uses model theoretic techniques to solve a conjecture of Bogomolov, Korotiaev and Tschinkel. Informally, he shows that one can reconstruct a projective algebraic curve of genus at least 2 over an algebraically closed field from an Abel-Jacobi embedding into its Jacobian (viewed as an abstract group).  Let us give a precise formulation of Zilber's result in a language that will be convenient for our purposes. 
	
	\begin{definition}\label{D: determines}
		Let $G$ be an algebraic group over an algebraically closed field $K$, and let $V$ be a closed irreducible subvariety of $G$. We say that $V$ \textit{geometrically determines $G$} if the following holds: let $G'$ be another algebraic group over an algebraically closed field $K'$, with a closed irreducible subvariety $V'$. Let $\phi:G(K)\rightarrow G'(K')$ be a group isomorphism satisfying $\phi(V(K))=V'(K')$. Then $\phi$ decomposes as a composition $\phi=f\circ\sigma$, where $\sigma$ is induced by a field automorphism $K\rightarrow K'$ and $f$ is a bijective isogeny of algebraic groups over $K'$.
	\end{definition}
	
	Definition \ref{D: determines} says that from the data $(G(K),\cdot,V(K))$ (viewed as a group with a distinguished subset), one is able to reconstruct the full algebraic structure of $G$ in the strongest possible sense. Note, in particular, that field isomorphisms and bijective isogenies always induce isomorphisms of abstract groups, and thus cannot be avoided. 
	
	In this language, Zilber shows:
	
	\begin{fact}\label{F: zil jac} Let $A$ be an abelian variety of dimension at least 2 over an algebraically closed field, and let $C$ be a closed irreducible curve in $A$. Assume further that:
    \begin{enumerate}
        \item $C$ is smooth and $0\in C$.
        \item $A$ is isomorphic over $C$ to its Jacobian. That is, let $C\rightarrow\operatorname{Jac}(C)$ be the Abel-Jacobi embedding determined by the point $0\in C$. Then the induced map $\operatorname{Jac}(C)\rightarrow A$ (making $C\rightarrow\operatorname{Jac}(C)\rightarrow A$ commute) is an isomorphism.
    \end{enumerate}
    Then $C$ geometrically determines $A$.
	\end{fact}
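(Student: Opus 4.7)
The plan is to construct from the abstract data $(A(K),+,C(K))$ a Zariski geometry in the sense of Hrushovski--Zilber, then apply the recognition theorem for non-locally-modular Zariski geometries to interpret an algebraically closed field, and finally show that this field is canonically $K$ and the construction is functorial in the pair $(A,C)$. Concretely, I would first define a structure $\mathcal{M}$ on the universe $A(K)$ with basic relations for $+$, $0$, and $C(K)$, and consider the family $\mathcal{F}$ of subsets of $A(K)^n$ obtained by closing under projections, intersections, and Minkowski sums of translates of $C$ and $-C$. The Jacobian hypothesis is essential at this step: the surjectivity of the sum morphism $C^g \to A$ (with $g = \dim A$), together with properties of the Abel--Jacobi embedding, implies that iterated sums of translates generate a rich enough collection to witness every algebraic subvariety of $A^n$ over $K$.

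The second step is to check that $\mathcal{F}$ satisfies the axioms of a Zariski geometry: Noetherianity, the dimension theorem for intersections, and properness of coordinate projections on irreducible closed sets. These properties are inherited from the classical algebraic geometry of the $A^n$, so the real content is showing that $\mathcal{F}$ exactly equals the family of algebraic subvarieties of the $A^n$ defined over $K$. Non-local-modularity then follows from $C$ being a curve of genus $\geq 2$ whose translates give a sufficiently rich family of plane curves in $A \times A$. By the Hrushovski--Zilber recognition theorem, $\mathcal{M}$ interprets an algebraically closed field $\widetilde{K}$, and $A(K)$ is naturally the set of $\widetilde{K}$-points of an abelian variety equipped with a distinguished subvariety corresponding to $C(K)$.

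For the reconstruction, given another triple $(A',C',K')$ and a group isomorphism $\phi: A(K) \to A'(K')$ carrying $C(K)$ onto $C'(K')$, the canonicity of the above construction forces $\phi$ to be an isomorphism between the associated Zariski geometries. This transports $\widetilde{K}$ onto $\widetilde{K}'$, yielding a field isomorphism $\sigma: K \to K'$; then $f := \phi \circ \sigma^{-1}$ is a bijection of $K'$-points preserving the full algebraic structure, hence a bijective isogeny as required. The main obstacle will be the second step: verifying that the family $\mathcal{F}$ recovered purely from $(A(K),+,C(K))$ exhausts all algebraic subvarieties of powers of $A$. This is exactly the place where the Jacobian hypothesis is indispensable---without it, translates of $C$ may live in a proper abelian subvariety of $A$ and the reconstruction genuinely fails.
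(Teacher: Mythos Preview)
This statement is cited from \cite{ZilJac} rather than proved in the paper, but the paper does describe Zilber's method: the crucial input is Rabinovich's theorem (the Restricted Trichotomy for rational curves), applied after using the Jacobian hypothesis to reduce to a strongly minimal set on a rational curve; once a field is interpreted, the reconstruction proceeds as in Lemma~\ref{L: full implies determines}.

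Your proposed route via Hrushovski--Zilber Zariski geometries contains a genuine circularity. You define $\mathcal{F}$ by closing translates of $C$ under projections, intersections, and Minkowski sums, and then wish to verify the Zariski-geometry axioms for $\mathcal{F}$ by saying they are ``inherited from the classical algebraic geometry of the $A^n$''. But that inheritance only holds once $\mathcal{F}$ already coincides with all algebraic subvarieties of the $A^n$; if $\mathcal{F}$ is a proper subfamily, projections of $\mathcal{F}$-closed sets need not be $\mathcal{F}$-constructible, so properness and the dimension theorem can fail. You yourself identify the ``real content'' as showing $\mathcal{F}$ equals the full family of algebraic subvarieties---but that is precisely fullness of the relic $(A(K),+,C(K))$, which is the whole theorem. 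Once you have it, Hrushovski--Zilber is redundant; without it, the axioms are unverified. The Jacobian hypothesis does not directly yield that iterated sums recover every subvariety of every $A^n$; in Zilber's actual argument it is used far more narrowly, to pass to a rational curve where Rabinovich's theorem applies.
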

	
	The crucial input of Zilber's proof is a general model-theoretic reconstruction result for algebraically closed fields, denoted in his paper as the \textit{Rabinovich Theorem} (\cite{Ra}). In fact, the Rabinovich Theorem is a special case of a larger program of Zilber, called the \textit{Restricted Trichotomy Conjecture}. The full Restricted Trichotomy Conjecture concerns all varieties over an algebraically closed field, while Rabinovich's Theorem is the same statement for rational curves.
	
	Since Zilber's paper, significant progress emerged toward the full Restricted Trichotomy Conjecture (see \cite{hasu}, \cite{CasACF0}, \cite{CaHaYe}), culminating in a complete proof in \cite{CaHaYe}. In particular, the statement of Rabinovich's Theorem now extends to varieties of any dimension. In light of this, it is natural to consider variants of Fact \ref{F: zil jac} replacing $C$ by a higher-dimensional variety. For example, a natural question is the following:
	
	\begin{question}\label{Q: main question} Let $A$ be an abelian variety over an algebraically closed field $K$. Which irreducible closed subvarieties of $A$ geometrically determine $A$?
	\end{question}
	
	\subsection{Main Results} There are three straightforward families of negative examples to Question \ref{Q: main question} (see Corollary \ref{C: determines characterization} for details):
	
	\begin{enumerate}
		\item Suppose $V$ is contained in a proper abelian subvariety $B$ of $A$. Then $V$ does not geometrically determine $A$, essentially because an automorphism of $(A(K),+,V(K))$ (as a group with distinguished subset) can involve wild group automorphisms of the quotient $A(K)/B(K)$.
		\item On the opposite side, suppose $V$ is really contained in a proper quotient -- that is, suppose $V$ is stabilized by a proper abelian subvariety $B$ of positive dimension. Then $V$ does not geometrically determine $A$, essentially because an automorphism of $(A(K),+,V(K))$ can involve wild group automorphisms of $B(K)$.
		\item Finally, suppose $V$ is essentially a product -- that is, suppose there are proper abelian subvarieties $B_1,B_2$ of $A$ such that the sum map $B_1\times B_2\rightarrow A$ is an isogeny, and such that $V=W_1+W_2$ for some subvarieties $W_i$ of $B_i$. Then $V$ does not geometrically determine $A$, but now it is less obvious. The idea is to choose an automorphism of $(A(K),+,V(K))$ by giving the identity on $B_1(K)$ and a wild field automorphism on $B_2(K)$, and show that these cannot be patched together into a single factorization $f\circ\sigma$ of the desired form.
	\end{enumerate}
	
	Our first main result says that the above examples are the only obstructions. In fact, the proof works also for semiabelian varieties:
	
	\begin{introtheorem}\label{T: main}
		Let $K$ be an algebraically closed field. Let $G$ be a semiabelian variety over $K$. Let $V$ be a closed, irreducible subvariety of $G$. Assume that:
		\begin{enumerate}
			\item $V$ generates $G$ as a group.
			\item $V$ has finite stabilizer.
			\item There do not exist proper connected algebraic subgroups $H_1,H_2$ of $G$ such that the sum map $H_1\times H_2\rightarrow G$ is an isogeny and $V$ decomposes as a sum $W_1+W_2$ with each $W_i$ a subvariety of $H_i$.
		\end{enumerate}
		Then $V$ geometrically determines $G$.
	\end{introtheorem}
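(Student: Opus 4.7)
The plan is to view a putative isomorphism $\phi:(G(K),+,V(K))\to(G'(K'),+,V'(K'))$ of two-sorted ``group with distinguished subset'' structures model-theoretically, and to extract a field isomorphism $\sigma:K\to K'$ beneath $\phi$ by invoking the Restricted Trichotomy Theorem of \cite{CaHaYe}. A standard Zilber-style argument will then identify the residual map as a bijective isogeny $f$, yielding the required decomposition $\phi=f\circ\sigma$.

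\textbf{Interpreting the field.} Writing $\mathcal{N}:=(G(K),+,V(K))$, the first goal is to show that $\mathcal{N}$ interprets an infinite field definably isomorphic to $K$. Since $\mathcal{N}$ is a reduct of the algebraically closed field $(K,+,\cdot)$ which already sees a commutative algebraic group of finite Morley rank, the Restricted Trichotomy Theorem reduces this to showing that $\mathcal{N}$ is not locally modular. Here all three hypotheses play a role: hypothesis (1) ensures that every point of $G$ is a sum/difference of elements of $V$, so that the geometry of $V$ is not confined to a proper subgroup; hypothesis (2) ensures $V$ is not a translate of a proper positive-dimensional subgroup, which is precisely the ``modular'' configuration above a group; and hypothesis (3) prevents $V$ from splitting across a product decomposition of $G$, which would reduce local modularity to the individual factors. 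Together these should imply that the family of translates of $V$ and their Minkowski sums witness non-local-modularity, and the trichotomy then produces an interpretable field $F$, which by standard arguments must be definably isomorphic to $K$.

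\textbf{From the interpretation to $(\sigma,f)$.} The same analysis applied to $\mathcal{N}':=(G'(K'),+,V'(K'))$ produces an interpretable field isomorphic to $K'$. Since $\phi$ is an isomorphism of the two structures, it transports one interpretation to the other, producing a pure field isomorphism $\sigma:K\to K'$. Replacing $(G',V')$ by its $\sigma^{-1}$-conjugate reduces to the case $K=K'$ and $\sigma=\operatorname{id}$. In this situation the graph of $\phi$ in $G(K)\times G'(K)$ is definable in the pure-field expansion of $\mathcal{N}$, hence constructible by Chevalley, and its projections to both factors are groups, so it is an algebraic subgroup. Being the graph of a bijective group homomorphism sending $V(K)$ onto $V'(K)$, it is the graph of a bijective isogeny $f:G\to G'$, completing the decomposition.

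\textbf{Principal obstacle.} The hard step is the verification of non-local-modularity of $\mathcal{N}$ under the precise hypotheses (1)--(3). The three negative examples in the introduction show that failure of any single hypothesis already yields a locally modular configuration in disguise, so the three must be used in concert rather than separately. In the semiabelian case one must also ensure the field interpretation descends compatibly along the toric component of $G$, since tori admit abstract group endomorphisms (such as Frobenius-like maps in positive characteristic) that do not arise from field automorphisms of the ambient variety; managing these uniformly across the abelian quotient and the toric kernel is likely the most delicate point of the argument, and it is where the factorization theory developed later in the paper may be needed as a bookkeeping device.
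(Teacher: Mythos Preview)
Your overall architecture matches the paper's: reduce to showing the relic $\mathcal N=(G,+,V)$ is \emph{full} (equivalently, interprets a field $F$ together with a definable injection $G\hookrightarrow F^n$), and then run Zilber's bi-interpretability argument to extract $\sigma$ and $f$. The second half of your sketch is essentially the paper's Lemma~\ref{L: full implies determines}, and is fine.

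The gap is in your first step. You write that the Restricted Trichotomy ``reduces this to showing that $\mathcal N$ is not locally modular,'' but local modularity is a property of minimal types, not of $\mathcal N$ as a whole. What actually must be shown (Fact~\ref{F: type characterization of full}) is that $G$ is almost internal to a \emph{single} non-locally modular minimal type. The danger is not that every minimal type in $\mathcal N$ is locally modular; it is that $\mathcal N$ may interpret \emph{several} mutually orthogonal fields, with $G$ spread across them and internal to none individually. Your informal argument (``translates of $V$ and Minkowski sums witness non-local-modularity'') does not address this, and indeed cannot: the same heuristic would apply verbatim to the product example $(\operatorname{Jac}(C_1)\times\operatorname{Jac}(C_2),\,C_1\times C_2)$ from the introduction, where two orthogonal fields appear and the relic is \emph{not} full.

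The paper handles this via a socle analysis. One defines the \emph{full components} $H_1,\dots,H_n$ (maximal connected full subgroups) and their sum $H$. Hypotheses (1) and (2), together with strong rigidity of semiabelian varieties and a stabilizer argument, force $V$ into a single coset of $H$, hence $H=G$ (Proposition~\ref{P: H=G}). Orthogonality of the $H_i$ then decomposes $V$ as $X_1+\dots+X_n$ with $X_i\subset H_i$ (Proposition~\ref{P: Z_i}), and only now does hypothesis~(3) enter, forcing $n=1$. Your proposal uses (1)--(3) ``in concert'' at the non-local-modularity stage, but in the actual proof they play sequential and quite different roles; in particular (3) is invoked only at the very end, after the socle theorem has already done the heavy lifting.

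Finally, your ``principal obstacle'' is misdiagnosed. The torus-versus-abelian-quotient issue and exotic endomorphisms of tori play no role; the delicate point is the socle theorem (getting $H=G$), whose proof combines the algebraic choice property of full subgroups, rigidity, and the dichotomy of Lemma~\ref{L: stab vs nlm} for minimal types.
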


    In case $G$ is an abelian variety, the unique factorization of $G$ into simple factors allows us to slightly weaken the hypotheses of Theorem \ref{T: main}. Namely, in this case we can replace (1) and (2) above by the assumption that $V$ is not a single point or all of $G$; see Corollary \ref{C: determines characterization}.
	
	When $G$ is a simple abelian variety, Theorem \ref{T: main} is essentially Zilber's theorem, powered by the full Restricted Trichotomy instead of Rabinovich's Theorem. Roughly speaking, standard model-theoretic arguments (combined with the Restricted Trichotomy) provide an algebraic curve generating $G$; then one can invoke Zilber's argument. 
	
	In the general (non-simple) case, one might not be able to directly reconstruct a curve generating $G$. Instead, our proof of Theorem \ref{T: main} operates in a more general framework, where we allow `decompositions' of the pair $(G,V)$ into finitely many pairs as in Theorem \ref{T: main}. In the end, this leads to a stronger conclusion and our second main result: every instance of (1) and (2) above decomposes canonically into a sum of instances of (3):
	
	\begin{introtheorem}\label{T: general}
		Let $K$ be an algebraically closed field. Let $G$ be a semiabelian variety over $K$. Let $V$ be a closed, irreducible subvariety of $G$. Assume that $V$ generates $G$ and has finite stabilizer. Then there are:
		\begin{itemize}
			\item connected algebraic subgroups $H_1,...,H_n$ of $G$ of positive dimension, and
			\item for each $i$, a closed irreducible subvariety $W_i$ of $H_i$,
		\end{itemize}
		
		such that:
		\begin{enumerate}
			\item The sum map $H_1\times...\times H_n\rightarrow G$ is an isogeny.
			\item $V=W_1+...+W_n$.
			\item Each $W_i$ geometrically determines $H_i$.
		\end{enumerate}
		
		Moreover, the $H_1,...,H_n$ are unique, and the $W_1,...,W_n$ are unique up to translation.
	\end{introtheorem}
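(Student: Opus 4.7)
The plan is to prove existence by induction on $\dim G$, with Theorem \ref{T: main} providing the atomic case, and to prove uniqueness by a common-refinement argument in the isogeny category of semiabelian varieties. Beyond Theorem \ref{T: main} itself, the only new ingredient is its converse, asserted in Corollary \ref{C: determines characterization}: if $(H, W)$ admits a nontrivial product decomposition as in condition (3) of Theorem \ref{T: main}, then $W$ does not geometrically determine $H$.

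\textbf{Existence.} I would induct on $\dim G$. If $(G, V)$ satisfies condition (3) of Theorem \ref{T: main}, that theorem supplies the trivial decomposition $n = 1$, $H_1 = G$, $W_1 = V$. Otherwise pick a nontrivial decomposition $V = W_1' + W_2'$ with $W_i' \subseteq H_i'$ proper and $H_1' \times H_2' \to G$ an isogeny. One then verifies that each $(H_i', W_i')$ inherits hypotheses (1) and (2): the quotient $G \to G/H_{\hat{i}}'$ maps $V$ onto a translate of $W_i'$ inside a group isogenous to $H_i'$, so both generation and finiteness of the stabilizer descend from $V$ to $W_i'$. Since $\dim H_i' < \dim G$, the inductive hypothesis yields decompositions of each $(H_i', W_i')$, and these concatenate to a decomposition of $(G, V)$.

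\textbf{Uniqueness.} Suppose $(H_i, W_i)_{i=1}^n$ and $(H_j', W_j')_{j=1}^m$ are two such decompositions. Up to isogeny, $G$ splits as a direct sum of simple semiabelian factors (Poincar\'e reducibility), and each $H_i$ and each $H_j'$ is a sum of some of these simples, so I can form a common refinement by setting $H_{ij}$ to be the sum of simples common to both $H_i$ and $H_j'$. Projecting $V$ onto each $H_{ij}$ yields subvarieties $V_{ij}$. The plan is then to show that for each fixed $i$, at most one $j$ gives nontrivial $V_{ij}$: if several $j$'s contributed, one would produce a nontrivial product decomposition of $(H_i, W_i)$ in the sense of Theorem \ref{T: main}(3), contradicting — via Corollary \ref{C: determines characterization} — the hypothesis that $W_i$ geometrically determines $H_i$. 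This would force a bijection $\{H_i\} \leftrightarrow \{H_j'\}$ as subgroups of $G$. Once the $H_i$ are identified, any two compatible decompositions of $V$ differ by an $n$-tuple of translations summing to zero, yielding uniqueness of each $W_i$ up to translation.

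\textbf{Main obstacle.} The hard step is the penultimate one: deducing a genuine product decomposition of $(H_i, W_i)$ from the fact that $V$ has nontrivial projections on two distinct $H_{ij}$ inside $H_i$. In general, a subvariety of a direct sum is only contained in the product of its projections, not equal to it, so one cannot simply project and glue. I expect the resolution to involve a careful interaction between the two original decompositions $V = \sum W_i = \sum W_j'$, using the "row" and "column" constraints they place on $V$ viewed as a subvariety of $\bigoplus_{ij} H_{ij}$ to force the product structure on each $W_i$. An alternative that might sidestep this difficulty is to characterize each $H_i$ directly as a canonical invariant of $(G, V)$ — for instance as a minimal subgroup through which a suitable functorial construction (the Albanese of $V$, or its image in $G$) factors — thereby reducing uniqueness to a functorial statement rather than a lattice-theoretic one.
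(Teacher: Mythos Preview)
Your existence argument is correct and essentially the paper's: both keep factoring until the pieces are indecomposable and then invoke Theorem~\ref{T: main} (the paper phrases this as ``take a factorization with the maximal number of factors'' rather than an explicit induction, but the content is the same; see Lemma~\ref{L: existence of factorization} and Corollary~\ref{C: big equivalence}).

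For uniqueness there are two real problems. First, the common-refinement step via Poincar\'e reducibility is shaky: semiabelian varieties do not enjoy complete reducibility, so there is no canonical decomposition of $G$ into simples through which every $H_i$ and $H_j'$ must factor; and even for abelian varieties with repeated simple factors, the simple subgroups are not unique as \emph{subgroups} (only up to isogeny), so ``the simples common to $H_i$ and $H_j'$'' is not well-posed. Second, the obstacle you flag is genuine and unresolved: knowing that $W_i$ sits inside a product $H_{ij_1}\times H_{ij_2}$ with both projections positive-dimensional does not by itself produce a product decomposition of $W_i$, and the sketch you give (``row and column constraints'') is not a proof.

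The paper avoids both difficulties by taking your second suggested escape route---characterizing the $H_i$ as a canonical invariant---but the invariant is model-theoretic rather than geometric. It establishes an order-reversing bijection (Theorem~\ref{T: Galois Correspondence}) between factorizations of $(G,[V])$ and $K$-relics expanding $(G,+,V)$: to a relic one assigns its full components (Proposition~\ref{P: Z_i}), and to a factorization $\{G_i\}$ one assigns the relic in which each $G_i$ carries its full induced structure and nothing more. The crucial point replacing your common-refinement lemma is that in the relic associated to $\{G_i\}$, the $G_i$ are pairwise \emph{orthogonal} full subgroups, so each is contained in a distinct maximal full component, and a dimension count forces equality. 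Uniqueness of the simple factorization is then immediate: maximal factorizations correspond to the minimal relic, namely $(G,+,V)$ itself (Corollary~\ref{C: uniqueness}). What this buys over your approach is that orthogonality of the factors is built into the model-theoretic construction, so one never has to wrestle with the lattice of algebraic subgroups directly.
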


    So Theorem \ref{T: main} now corresponds to Theorem \ref{T: general} in the case $n=1$.
	
	%In the language of Theorem \ref{T: general}, it follows that $V$ geometrically determines $A$ if and only if $n=1$.
	
	\subsection{Relation to Mordell-Lang} The decomposition of $G$ provided by Theorem \ref{T: general} is reminiscent of, and in fact closely related to, Hrushovski's proof of the Mordell-Lang Conjecture for function fields \cite{HrMordellLang}. We do not elaborate on the statement of this conjecture; instead, we informally recall that one is given a certain expansion of a group (coming from a semiabelian variety) with unusual properties, and one needs to produce an algebraically closed field `explaining' these properties.  Let us now recall the two overarching steps in Hrushovski's proof:
    
    \begin{enumerate}
        \item[A.] First (Proposition 5.6., \emph{loc. cit.}), he proves the result if the group is algebraic over the realizations of a single minimal type (see the appendix for more on these terms; Hrushovski calls such a subgroup \textit{semi-minimal}). This case amounts to an instance of the Zilber Trichotomy (\cite{HrZil}).
        \item[B.] Second, he reduces the general case to the semi-minimal case by a series of model-theoretic and group-theoretic reductions. The key technical tool here is the \emph{socle theorem} (\cite[Proposition 4.3]{HrMordellLang}).
    \end{enumerate}

    Our Theorems \ref{T: main} and \ref{T: general} above follow a similar outline, working in the structure $(G(K),+,V(K))$ (i.e. the group $G$ with a predicate naming $V$). In essence, they are proved by combining Hrushovski's socle analysis with a different instance of the trichotomy more suited to this structure (namely the Restricted Trichotomy mentioned above).
	
	Let us elaborate on the socle analysis and how it pertains to our results. Suppose $G$ is an expansion of an abelian group of finite Morley rank (such as the group $G$ in either our setting or Hrushovski's). As in item (B) above, one wishes to in some sense reduce $G$ into semi-minimal pieces. To that end, one then defines the (model-theoretic) \textit{socle} of $G$ as the group generated by all of its connected semi-minimal subgroups. As it turns out, the socle of $G$ is itself a connected definable subgroup, and is in fact the sum of finitely many maximal connected semi-minimal subgroups. The main question we are interested in here -- and also the crucial question considered by Hrushovski -- is the relation between $G$ and its socle.
	
	Hrushovski's theorem states that if $G$ is `rigid', and $X\subseteq G$ is a definable set which is in some (model-theoretic) sense `irreducible' and with `finite stabilizer', then $X$ is `almost everywhere' contained in a unique coset of the socle. In particular, if such a set $X$ exists which is not almost contained in a coset of a proper subgroup, then $G$ is equal to its socle, and thus $G$ is a sum of semi-minimal subgroups. The reader should notice the similarity of this result with the conclusion of our Theorem \ref{T: general}. 
	
	In fact, it is possible (but not trivial) to incorporate direct applications of Hrushovski's socle theorem into our poofs. Instead, however, we have chosen to give a slightly longer variant. Our approach uses what we call the `full socle'. The hope is that the full socle is a more natural object to consider in our particular setting, so might clarify the argument for those outside model theory. Incidentally, the authors discovered the proof of Theorem \ref{T: general} internally (using the full socle) before realizing the relationship to Hrushovski's paper.

    \subsection{The role of the Trichotomy}
	
	In all of the results we have discussed -- Hrushovski's Mordell-Lang proof, Zilber's theorem on Jacobians, and the present paper -- there is a vital role played by Zilber's Trichotomy. In each case, one is given a group $G$ with some extra structure, and one wishes to definably endow $G$ with algebraic structure (i.e. realize $G$ as an algebraic group over an algebraically closed field $K$). In each case, the role of the Zilber Trichotomy is to produce the field $K$ (in a definable manner). Moreover, the Trichotomy then provides a definable finite correspondence between $G$ and some $K^n$, and this allows (with some care) to definably reconstruct the algebraic structure on $G$ over $K$.
    
    %also produces a finite corresponds the machine which  The most notable application of the Trichotomy in Hrushovski's argument is in the proof of the special case of a non locally-modular semi-minimal group $H$. In that case, the Trichotomy produces a field, $k$, providing $H$ with the structure of an algebraic $k$-group. More precisely, the Trichotomy produces also a finite correspondence between $k$ and $H$, allowing with some care, to endow $H$ with the structure of an algebraic group over $k$. This is, in essence, also the role of Rabinovich's Theorem in Zilber's theorem on the Jacobian. 

    It is, however, worthwhile to point out a subtle difference in the use of the Trichotomy in these three theorems. Hrushovski's theorem statement (i.e. the portion of his paper relevant to our work) most resembles the \textit{hypotheses} of our Theorem \ref{T: main} paired with the \textit{conclusion} of our Theorem \ref{T: general}. That is, without assuming any indecomposability requirement (as in our clause (3) from Theorem \ref{T: main}), he produces a \textit{single} algebraically closed field controlling the whole of the given group $G$. This is because in his context, the statement of the Zilber Trichotomy is a bit stronger: not only does each semi-minimal component of $G$ produce an algebraically closed field, but any two such fields are definably isomorphic (so that in the end, the whole group is itself semi-minimal). Something similar happens in Zilber's paper on Jacobians, essentially because the Jacobian of a curve is generated by one copy of the curve.
    
    In contrast, in the current paper, the group $G$ may admit several definable isomorphism classes of definable algebraically closed fields. For example, suppose $G$ is a direct product of two Jacobians $\operatorname{Jac}(C_1)\times\operatorname{Jac}(C_2)$, endowed with the subvariety $C_1\times C_2$ (viewed through Abel-Jacobi embeddigs of each $C_i$); then each $(\operatorname{Jac}(J_i),C_i)$ interprets a field by Zilber's theorem, but these two fields are orthogonal. In this case, the socle of $G$ will have two semi-minimal components -- namely $\operatorname{Jac}(C_1)$ and $\operatorname{Jac}(C_2)$. In fact, under the assumptions of Theorem \ref{T: general}, the components of the full socle correspond precisely to the definable isomorphism classes of algebraically closed fields definable from $G$ (and each component is a connected algebraic group over such a field). It is for this reason that the full socle seems the more natural object to consider for the problem at hand. 
	
	%It is interesting to note that there is another, more subtle application of Zilber's Trichotomy in Hrushovski's proof, that does not exist in Zilber's theorem (essentially, because the Jacobian of a curve $C$ is generated by $C$), but reappears in our proof of Theorem \ref{T: general}. 
	
	%In our description of the Hrushovski's socale theorem we have not dwelt on the actual reduction to the semi-minimal case. The socle theorem itself only reduces the problem to the -- in Hrushovski's terminology -- pluriminimal case. To reduce to the semi-minimal case the Zariski Trichotomy is applied once again to show that there exists a unique non-locally modular component of the socle. This follows from the fact that, in Hrushovski's setting, all fields associated with all Zariski types are definably isomorphic. In the setting of the present paper, unless the variety $X$ geometrically determines $A$, this need not be the case. Under the assumptions of Theorem \ref{T: general} the components of the full socle correspond precisely to the definable isomorphism classes of definable infinite fields (and each component is a connected algebraic group over such a field). It is for this reason that the full socle seems the more natural object to consider for the problem at hand. 

    \subsection{Factorization Theorems}

	Theorem \ref{T: general} -- or more precisely the full socle theorem underlying it (Proposition \ref{P: H=G}) -- provides a natural decomposition of a semiabelian variety with respect to certain distinguished subvarieties. The last two sections of the paper study this type of factorization in more detail. Precisely, we define a \textit{semiabelian pair} as a pair $(G,[X])$, where $G$ is a semiabelian variety and $[X]$ is an equivalence class of closed irreducible subvarieties of $G$ modulo translation. We then prove general factorization theorems for semiabelian pairs. First, Theorem \ref{T: Galois Correspondence} treats the case that $X$ has finite stabilizer and generates $G$; in this case we give a model-theoretic classification of all factorizations of the pair $(G,[X])$. As corollaries, we deduce that there is exactly one factorization into `simple' pairs; and that $(G,[X])$ is simple if and only if $X$ geometrically determines $G$.
	
	In the final section we study factorizations of pairs $(G, [X])$ for arbitrary $X$. To get a clean result in this case, we restrict to abelian varieties, and work up to \emph{weak isogeny} (Definition \ref{D: w-siog}). In this case we again characterize the simple weak isogeny classes (Lemma \ref{L: simple characterization}) and prove that every class factors uniquely into simple classes (Theorem \ref{T: unique factorization}). 

    \subsection{For those outside model theory} Our proofs are almost purely model theoretic, involving even less algebraic geometry than Zilber's proof of the Jacobian theorem  (Zilber needed a little geometry to show that the assumptions of Rabinovich's Theorem were satisfied; while in the present paper, having the full Restricted Trichotomy in our hands, this is automatic). As such, we assume familiarity with basic model theory and stability theory. \textbf{For the benefit of readers outside model theory, we have included an expository appendix in the online version.} The purpose of the appendix is to informally explain the relevant model-theoretic notions appearing in the proof, emphasizing where possible the relation to analogous notions in algebraic geometry. We hope this will allow readers to mostly follow the definitions and arguments, as well as references to more detailed literature. 
	
	\subsection{Relation to other work.}
	
		Theorem \ref{T: main} relates to work of Kollar, Lieblich, Olsson, and Sawin (\cite{KLOS}) on topological reconstruction theorems for varieties. They prove that over any uncountable field of characteristic zero, a smooth, projective variety of dimension at least two is determined by its underlying Zariski topological space (the formal statement is expressed in a similar language to Definition \ref{D: determines} above). The same statement fails for curves, and Zilber's theorem is currently the best known attempt to salvage it. Note that the theorem of \cite{KLOS} works over any field of characteristic zero, and requires a bijection preserving all Zariski closed sets. Meanwhile, our theorem requires an algebraically closed field, and for the bijection to be a group isomorphism; but we get a stronger conclusion -- i.e. the map only needs to preserve \textit{one} Zariski closed set, as long as this one set does not come from a small list of exceptions.
		
		In \cite[Conjecture 2.6]{BooVol} Booher and Voloch extend the original conjecture of Bogomolov, Korotiaev and Tschinkel (\cite{BoKoTs}) proved by Zilber to generalized Jacobians. The results of the present paper (indeed, already Zilber's Theorem augmented with the Restricted Trichotomy for algebraic curves) provide a slightly weaker version of the conjecture than the one formulated by Booher and Voloch. A proof of the full conjecture is delegated to a sequel.

\section{Full Relics}

Most of our work will be expressed in terms of \textit{relics} of algebraically closed fields (also called ACF-relics) -- this is a modern term\footnote{The terminology of relics is new, but the concepts involved are not. The notions of \textit{reducts} and \textit{interpretable structures} are very similar, but technically less suited for the current setting.}  allowing a convenient statement of the Restricted Trichotomy Conjecture mentioned above. ACF-relics were studied extensively in \cite{CasHas}. We recall the definitions:

\begin{definition}
    Let $K$ be an algebraically closed field. 
    \begin{enumerate}
        \item A \textit{$K$-relic} is a structure $\mathcal M=(M,...)$ such that every definable set in $\mathcal M$ (including $M$ itself) is interpretable in $K$.
        \item Suppose $\mathcal M=(M,...)$ is a $K$-relic, and $X$ is an interpretable set in $\mathcal M$. We say that $X$ is \textit{full in $K$ relative to $\mathcal M$} if every $K$-interpretable subset of every $X^n$ is $\mathcal M$-interpretable.
        \item A $K$-relic $\mathcal M$ is \textit{full} if its universe $M$ is full in $K$ relative to $\mathcal M$.
    \end{enumerate}
\end{definition}

So a $K$-relic is just a structure built using the interpretable sets in $K$. For example, any structure $(A(K),+,V(K))$ as in the statement of Theorem \ref{T: general} is a $K$-relic. Note that any $K$-relic has finite Morley rank -- and if $K$ is uncountable and $\mathcal M$ is a $K$-relic whose language is smaller than $|K|$, then $\mathcal M$ is saturated. 
% (so that the discussion of the previous section applies).

In general, the expressive power of a $K$-relic might be significantly weaker than that of $K$. For example, the pure group $(\mathbb C,+)$ is a $\mathbb C$-relic that only recovers linear algebra over $\mathbb Q$. In contrast, \textit{full} relics reconstruct \textit{all algebraic structure}. A simple example is the complex field endowed with only addition and the square map $z\mapsto z^2$. This is a full relic, because one can define multiplication from addition and squaring: $xy$ is the unique solution to $x^2+y^2+z+z=(x+y)^2$. Note that once addition and multiplication are recovered, one automatically recovers every $(\mathbb C,+,\times)$-definable set of every $\mathbb C^n$.

Full relics capture reconstruction theorems in a general setting. That is, a proof that a $K$-relic is full is just a proof of a reconstruction theorem beginning with the data in the language of the relic. 

In \cite{CasHas}, we proved several characterizations of full ACF-relics. For the purposes of this paper, we state:

\begin{fact}\cite[Proposition 4.12]{CasHas}\label{F: full characterization} Let $K$ be an algebraically closed field, and let $\mathcal M$ be a $K$-relic. Then $\mathcal M$ is full if and only if there are an $\mathcal M$-interpretable infinite field $F$, and an $\mathcal M$-interpretable injection $M\rightarrow F^n$ for some $n$.
\end{fact}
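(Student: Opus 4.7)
The plan is to prove both directions using one classical input repeatedly: Poizat's theorem that any infinite field $F$ interpretable in an algebraically closed field $K$ is $K$-definably isomorphic to $K$, so in particular the full $K$-induced structure on $F$ coincides with the pure field structure on $F$. This lets us move freely between $K$-interpretability and $\mathcal M$-interpretability across any $\mathcal M$-interpretable field.

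For the reverse direction, given the $\mathcal M$-interpretable field $F$ and injection $i: M \to F^n$, take a $K$-interpretable $X \subseteq M^k$. Its image $i^k(X)$ is a $K$-interpretable subset of $F^{nk}$; by Poizat this set is $F$-definable in the pure field structure on $F$, and hence $\mathcal M$-interpretable. Pulling back along the $\mathcal M$-interpretable injection $i^k$ exhibits $X$ itself as $\mathcal M$-interpretable, giving fullness.

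For the forward direction, suppose $\mathcal M$ is full and (this being the nontrivial case) $M$ is infinite. Elimination of imaginaries in ACF first provides a $K$-definable injection $\iota_0 : M \to K^d$ for some $d$. The heart of the argument is the production of an $\mathcal M$-interpretable infinite field. To this end, I would select a strongly minimal $\mathcal M$-definable subset $S \subseteq M^e$ and note that fullness equips $S$ with its full $K$-induced structure as an $\mathcal M$-interpretable structure; the Restricted Trichotomy Conjecture (in the form proved in \cite{CaHaYe}) then yields an $\mathcal M$-interpretable infinite field $F$. Fullness is essential here, as it rules out the locally modular cases of the trichotomy: those would correspond to $K$-relics on $S$ capturing strictly less than the full $K$-induced structure. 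Poizat now provides a $K$-definable isomorphism $\psi: F \to K$, and $\psi^{-1} \circ \iota_0 : M \to F^d$ is a $K$-definable injection. Writing $F$ as a quotient $D/\sim$ with $D \subseteq M^r$ and $\sim$ both $\mathcal M$-definable, the preimage of the graph of this injection in $M^{rd+1}$ is $K$-interpretable, hence by fullness $\mathcal M$-interpretable; projecting back to $M \times F^d$ gives the desired $\mathcal M$-interpretable injection.

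The main obstacle is clearly the production of the field $F$ in the forward direction, which invokes the full Restricted Trichotomy; everything else is bookkeeping with Poizat's theorem and the standard correspondence between $K$- and $\mathcal M$-interpretability under fullness.
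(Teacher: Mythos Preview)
The paper does not prove this statement; it is cited as a fact from \cite{CasHas}, so there is no in-paper proof to compare against.

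Your reverse direction is correct and is the standard argument. Your forward direction also works, but it routes through the Restricted Trichotomy, which is both unnecessary and, in the logic of this paper, out of order: the paper subsequently (Fact~\ref{F: type characterization of full}) derives a sharper characterization precisely by \emph{combining} the present fact with the Trichotomy, so Fact~\ref{F: full characterization} is meant to be elementary and Trichotomy-free. And it is. Once $\mathcal M$ is full with infinite universe, elimination of imaginaries places $M$ inside some $K^d$; some coordinate projection $M\to K$ then has infinite, hence cofinite, image. The $K$-definable equivalence relations on powers of $M$ induced by the field operations on $K$ (e.g.\ $(a,b)\approx(a',b')$ iff $\pi(a)-\pi(b)=\pi(a')-\pi(b')$) are $\mathcal M$-definable by fullness, and the quotient is an $\mathcal M$-interpretable copy of $K$ directly. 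Your remaining steps (Poizat, transporting the injection back through fullness) then go through unchanged.

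Separately, your one-line justification that fullness ``rules out the locally modular cases'' is not really an argument: saying that local modularity would mean ``capturing strictly less than the full $K$-induced structure'' is a restatement of the desired conclusion, not a reason for it. The honest reason a strongly minimal set with full $K$-structure is non-locally modular is that it is bi-interpretable with $K$---but establishing that is exactly the elementary construction above, after which invoking the Trichotomy is redundant.
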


Fact \ref{F: full characterization} allows us to relate full relics to the situation of Theorem \ref{T: main}. The following is implicit in Zilber's paper (\cite{ZilJac}):

\begin{lemma}\label{L: full implies determines} Let $G$ be an algebraic group over the algebraically closed field $K$, and $V$ a closed, irreducible subvariety of $G$. Let $\mathcal G=(G(K),\cdot,V(K))$, a $K$-relic in the language of groups with a distinguished subset. If $\mathcal G$ is a full $K$-relic, then $V$ geometrically determines $G$.
\end{lemma}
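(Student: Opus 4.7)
The plan is to use fullness to canonically recover the algebraic structure of $G$ from the relic $\mathcal{G}$, so that any abstract isomorphism with $\mathcal{G}' := (G'(K'), \cdot, V'(K'))$ transports this recovered structure over to $G'$. Suppose $\phi : G(K) \to G'(K')$ is a group isomorphism with $\phi(V(K)) = V'(K')$. Apply Fact \ref{F: full characterization} to obtain a $\mathcal{G}$-interpretable infinite field $F$ and a $\mathcal{G}$-interpretable injection $j : G(K) \hookrightarrow F^n$. Since $\mathcal{G}$ is a $K$-relic, $F$ is also interpretable in the pure field $K$; by the classical fact that every infinite field interpretable in an algebraically closed field is definably isomorphic to it (Poizat), there is a $K$-definable field isomorphism $\alpha : F \to K$. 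Set $\bar{j} := \alpha^n \circ j : G(K) \hookrightarrow K^n$, a $K$-constructible injection whose image $X$ is a constructible group. By the recognition theorem identifying constructible groups with algebraic groups (Weil/Hrushovski/van den Dries), $\bar{j}$ realizes as a bijective isogeny $G \to \widetilde{G}$ of algebraic groups over $K$ for some algebraic group $\widetilde{G}$ with underlying set $X$.

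Run the same construction in $\mathcal{G}'$, using the same formulas, to obtain corresponding data $F', \alpha', \bar{j}', X', \widetilde{G}'$ over $K'$. The isomorphism $\phi$ preserves the interpretation of $F$, and so induces an abstract ring isomorphism $\phi_F : F \to F'$; composing, define the abstract field isomorphism $\sigma := \alpha' \circ \phi_F \circ \alpha^{-1} : K \to K'$. Applied coordinatewise, $\sigma$ sends $X$ bijectively onto $X'$ and $\bar{j}(V(K))$ onto $\bar{j}'(V'(K'))$. The crucial observation is that fullness forces every $K$-constructible subset of $X^m$ to be $\mathcal{G}$-interpretable via $\bar{j}$, and hence transported by $\phi$ to a $\mathcal{G}'$-interpretable, therefore $K'$-constructible, subset of $X'^m$. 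Consequently $\sigma$ transports the entire $K$-constructible structure on $X$ onto the $K'$-constructible structure on $X'$, and in particular carries the subvariety $\bar{j}(V(K))$ onto $\bar{j}'(V'(K'))$ as algebraic varieties.

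It follows that the $\sigma$-twist $\widetilde{G}^\sigma$ is isomorphic over $K'$ to $\widetilde{G}'$ via an algebraic (biregular, bijective) morphism respecting the distinguished subvarieties. Composing with the bijective isogenies $G \to \widetilde{G}$ and $\widetilde{G}' \to G'$, and with the canonical bijection $\sigma_* : G(K) \to G^\sigma(K')$ induced by $\sigma$, yields the desired factorization $\phi = f \circ \sigma_*$ for a bijective isogeny $f : G^\sigma \to G'$ of algebraic groups over $K'$. The main obstacle is verifying that the abstract ring isomorphism $\phi_F$ extends (via the constructible-algebraic dictionary supplied by fullness) to a genuine algebraic morphism $f$, rather than merely an abstract group isomorphism; this is exactly what fullness accomplishes, by ensuring that every algebraic relation on $G$ is visible and canonically preserved inside $\mathcal{G}$.
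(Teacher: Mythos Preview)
Your proof is essentially correct and follows the same strategy as the paper's: use fullness to interpret a copy of $K$ inside $\mathcal G$, extract the field isomorphism $\sigma$ from the induced map on interpreted fields, and then argue that what remains is a $K'$-definable group isomorphism between algebraic groups. The paper packages this via a bi-interpretability diagram (with auxiliary relics $H_i$ living over the interpreted fields $F_i$), while you work more concretely with the explicit embedding $j$; but the content is the same.

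One small gap worth flagging: your claim that $\bar j$ ``realizes as a bijective isogeny $G\to\widetilde G$'' is not quite right in positive characteristic. A $K$-definable bijective group homomorphism between algebraic groups need not be a morphism of varieties---its graph is a closed subgroup of the product, but the projection to the source may be a purely inseparable (Frobenius) isogeny rather than an isomorphism, so the map itself can involve an inverse Frobenius. The same issue recurs for your final $f$. The paper handles this explicitly in its last step: after showing the residual map is $K$-definable, it factors it as a Frobenius power composed with a genuine bijective isogeny, and then absorbs the Frobenius power into $\sigma$. You should add the analogous sentence; once you do, the argument goes through.
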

\begin{proof}
The argument is essentially that of  \cite[\S 5]{ZilJac}, so we only give a quick sketch. The assumptions provide us with the following diagram (the squiggly arrows should be read as ``the structure at the tail of the arrow interprets the one at its head"): 

\begin{center}
\begin{tikzcd}
K_1 \arrow[r, squiggly]  & G_1 \ar[d, leftrightarrow, "\phi"] \ar[r, squiggly] & F_1 \ar[r, squiggly] \ar[d, leftrightarrow, "\tilde \phi"] \ar[ll, leftrightarrow, "\simeq" description, bend right]  & H_1 \ar[ll, leftrightarrow, "\simeq" description, bend right]  \ar[d, leftrightarrow, "\hat \phi"]\\
K_2 \arrow[r, squiggly]  & G_2 \ar[r, squiggly] & F_2 \ar[r, squiggly] \ar[ll, leftrightarrow, "\simeq" description, bend left]  & H_2 \ar[ll, leftrightarrow, "\simeq" description, bend left] 
\end{tikzcd}
\end{center}
Indeed, suppose we are given isomorphic full relics $\mathcal G_i$ of $K_i$ as above. Then by Fact \ref{F: full characterization} (i.e. the Restricted Trichotomy), each $\mathcal G_i$ is \textit{bi-interpretable} with $K_i$. Precisely:

\begin{enumerate}
    \item $\mathcal G_i$ interprets an algebraically closed field $F_i$, which is $K_i$-definably isomorphic to $K_i$ (by e.g. \cite[Theorem 4.15]{PoiGroups}).
   \item Each $F_i$ then admits a relic $\mathcal H_i$ (in the signature of $\mathcal G_i$) which is $\mathcal G_i$-definably isomorphic to $\mathcal G_i$ (this is a restatement of Fact \ref{F: full characterization}).
\end{enumerate}

%interprets both an algebraically closed field $F_i$, and an isomorphism between $\mathcal G_i$ and some $F_i$-relic $\mathcal H_i$.   and a each isomorphic to $K_i$, definably in $K_i$.  Since $K_i$ interprets $G_i$, we can find in $F_i$ a copy $H_i$ of $G_i$ (given by the same interpretation in $F_i$). 

Therefore, an isomorphism $\phi: G_1\to G_2$  induces an isomorphism $\tilde \phi: F_1\to F_2$, which -- in turn -- induces an isomorphism $\hat \phi: H_1\to H_2$. We thus obtain an abstract field isomorphism of $K_1$ with $K_2$, by factoring as $K_1\rightarrow F_1\rightarrow F_2\rightarrow K_2$. Now if we first apply this automorphism, we may identity $K_1$ and $K_2$, and from now on write $K_1=K_2=K$ (precisely, in the language of Definition \ref{D: determines}, we are building the isomorphism $K_1\rightarrow F_1\rightarrow F_2\rightarrow K_2$ into the desired map $\sigma$).

We thus obtain the simplified diagram: 

\begin{center}
\begin{tikzcd}
K \arrow[r, squiggly] \arrow[rd, squiggly] & G_1 \ar[d, leftrightarrow, "\phi*"] \ar[r, squiggly] & F_1 \ar[r, squiggly] \ar[d, leftrightarrow, "\tilde \phi*"] \ar[ll, leftrightarrow, "\simeq" description, bend right]  & H_1 \ar[ll, leftrightarrow, "\simeq" description, bend right]  \ar[d, leftrightarrow, "\hat \phi*"]\\
 & G_2 \ar[r, squiggly] & F_2 \ar[r, squiggly] \ar[llu, leftrightarrow, "\simeq" description, bend left=80]  & H_2 \ar[ll, leftrightarrow, "\simeq" description, bend left] 
\end{tikzcd}
\end{center}

where now $K\rightarrow F_1\rightarrow F_2\rightarrow K$ is the identity on $K$. Then it follows that the resulting isomorphism $\tilde \phi^*:F_1\rightarrow F_2$ is $K$-definable (factoring it as $F_1\rightarrow K\rightarrow F_2$), and therefore so is $\hat \phi^*:H_1\rightarrow H_2$. But recall that the isomorphisms $G_i\to H_i$ are $\mathcal G_i$-definable, and thus also $K$-definable. Thus $\phi^*:G_1\rightarrow G_2$ is also $K$-definable, by factoring it as $G_1\rightarrow H_1\rightarrow H_2\rightarrow G_2$. 

 Now the definability of $\phi^*$ (and quantifier elimination in algebraically closed fields) implies that $\phi^*$ decomposes as a composition of a Frobenius automorphism and a bijective isogeny:
$G_1\xrightarrow{\mathrm{Fr}^m}  G_3\xrightarrow{I} G_2$. Finally, absorbing the Frobenius automorphism into our original field isomorphism $K_1\rightarrow K_2$ ( that we used to pass from the first diagram to the second), we obtain a factorization of the desired form and thus conclude the lemma.
\end{proof}

By Lemma \ref{L: full implies determines}, we can restate the goal of the whole paper as follows:

\begin{question}\label{Q: relic version of main question}
    Let $G$ be a semiabelian variety over the algebraically closed field $K$, and let $V$ be a closed, irreducible subvariety. For which pairs $(G,V)$ is the $K$-relic $(G(K),+,V(K))$ full?
\end{question}

Our theorem will show that $(G(K),+,V(K))$ is full precisely when the conditions of Theorem \ref{T: main} are met. Thus, for the rest of the paper, we will work mainly in the language of Question \ref{Q: relic version of main question}.

Note that the equivalent condition in Fact \ref{F: full characterization} (and its negation) are invariant under moving to a bigger model (for the model theorists, we are using here that $K$ eliminates $\exists^{\infty}$ in all sorts, and thus so does $\mathcal M$). This says that, when determining whether a $K$-relic $\mathcal M$ is full, one can assume that $K$ is arbitrarily large, and thus that both $K$ and $\mathcal M$ are saturated. Thus, for the rest of the paper, we will always assume saturation. 
% This allows us to use the discussion from the previous section. 

We will need a much stronger version of Fact \ref{F: full characterization}, invoking Zilber's Trichotomy (or more precisely the recent solution of the \textit{Restricted Trichotomy Conjecture}, \cite{CaHaYe}). This is a general reconstruction theorem for ACF-relics. A particularly simple characterization can be given for commutative divisible groups (which include all semiabelian varieties), so we will state only this version. We also give a quick proof sketch for interested model theorists.

\begin{fact}\label{F: type characterization of full} Let $K$ be a saturated\footnote{For those unfamiliar with saturation: if the language of the relic is countable, then one could replace both instances of `saturated' in this statement with `uncountable'.} algebraically closed field, and let $\mathcal G=(G,+,...)$ be a saturated $K$-relic, where $(G,+)$ is a divisible abelian group. Then $\mathcal G$ is full if and only if its universe $G$ is almost $\mathcal G$-internal to a non-locally modular minimal type.
\end{fact}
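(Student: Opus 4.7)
The plan is to prove the two directions separately. The forward direction $(\Rightarrow)$ will follow quickly from Fact \ref{F: full characterization}, while the reverse direction $(\Leftarrow)$ will combine the Restricted Trichotomy of \cite{CaHaYe} with a standard model-theoretic group-chunk argument and elimination of imaginaries in algebraically closed fields.

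For the forward direction, assume $\mathcal G$ is full. Fact \ref{F: full characterization} supplies a $\mathcal G$-interpretable infinite field $F$ and a $\mathcal G$-interpretable injection $\iota\colon G\hookrightarrow F^n$. Since $F$ already carries its own field structure inside $\mathcal G$, applying Fact \ref{F: full characterization} a second time to $F$ itself (with the identity $F\hookrightarrow F^1$) shows that $F$ is a full $K$-relic; in particular every $K$-definable subset of $F$ is $\mathcal G$-interpretable, forcing the generic type of $F$ to be strongly minimal in $\mathcal G$. This type is moreover non-locally modular (being the generic of an ACF), and $\iota$ exhibits $G$ as $\mathcal G$-internal, and hence almost internal, to it.

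For the reverse direction, let $p$ be a non-locally modular minimal type witnessing almost $\mathcal G$-internality of $G$. The Restricted Trichotomy (\cite{CaHaYe}) produces a $\mathcal G$-interpretable algebraically closed field $F$ whose generic type is non-orthogonal to $p$; since both types are minimal, non-orthogonality upgrades to almost internality between them, and transitivity of almost internality then gives $G$ almost $\mathcal G$-internal to $F$. Because $G$ is a connected (by divisibility) abelian group of finite Morley rank, almost internal to the $\mathcal G$-interpretable algebraically closed field $F$, the model-theoretic group-chunk theorem (of Hrushovski, in the tradition of Weil and Poizat) provides a $\mathcal G$-definable isomorphism of $G$ with an algebraic group $H$ over $F$. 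Elimination of imaginaries in the pure field $F$ then yields an $F$-definable bijection of $H$ with a definable subset of some $F^M$; and since $F$ is a full $K$-relic (by the same argument as in the forward direction), this bijection is $\mathcal G$-interpretable. Composing gives a $\mathcal G$-interpretable injection $G\hookrightarrow F^M$, and fullness of $\mathcal G$ follows from Fact \ref{F: full characterization}.

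The main content, and the only serious potential obstacle, is the use of the Restricted Trichotomy in the reverse direction to manufacture the field $F$; this is the deep geometric input from \cite{CaHaYe}. The remaining steps — the upgrade of non-orthogonality to almost internality for minimal types, the group-chunk theorem, and elimination of imaginaries in $F$ — are routine tools of geometric stability theory. The divisibility hypothesis on $(G,+)$ enters only to ensure connectedness of $G$ as a group of finite Morley rank, which is needed to apply the group-chunk step.
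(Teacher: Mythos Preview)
Your forward direction is essentially the paper's, and is fine (strictly speaking, the injection $\iota$ only shows $G$ is internal to $F$, not to the generic type of $F$; the paper closes this by noting that every element of $F$ is a sum of two generics, so $F$ is internal to its own generic type --- but this is a trivial omission).

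The reverse direction has a genuine gap. You establish only that $G$ is \emph{almost} internal to $F$, and then invoke the Weil--Hrushovski group-chunk theorem to produce a $\mathcal G$-definable isomorphism of $G$ with an algebraic group over $F$. But the group-chunk theorem, as usually stated, requires $G$ to be \emph{internal} to $F$, not merely almost internal; from almost internality alone one does not get an outright definable isomorphism with an $F$-definable group. The paper handles exactly this point: it cites a result of Hrushovski (\cite[Proposition 2.25]{HrRid}) that almost internality of a group to $F$ is always witnessed by a finite subgroup $H\leq G$ with $G/H$ internal to $F$, and then uses divisibility in the sharper way --- multiplication by $m=|H|$ gives a definable surjection $G/H\to G$, so $G$ is internal to $G/H$ and hence to $F$. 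Only then does one get the interpretable injection $G\hookrightarrow F^n$ (via elimination of imaginaries in $F$) needed for Fact~\ref{F: full characterization}.

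Your final remark that divisibility ``enters only to ensure connectedness'' is therefore not right: connectedness is necessary but not sufficient for your group-chunk step, and the real work divisibility does is the upgrade from almost internality to internality via the multiplication-by-$m$ map. Once you insert this step (or equivalently cite the Hrushovski result and the divisibility argument), your proof becomes correct and coincides with the paper's.
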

\begin{proof}
    If $\mathcal G$ is full, then by Fact \ref{F: full characterization}, there are a $\mathcal G$-interpretable algebraically closed $F$ and a $\mathcal G$-interpretable injection $f:G\rightarrow F^n$ for some $n$. This shows that $G$ is internal to $F$. Now let $p$ be the generic type of $F$. So $p$ is minimal and non-locally modular. Note that every element of $F$ is a sum of two realizations of $p$ -- thus $F$ is internal to $p$, and thus by transitivity, so is $G$.

    Now assume $G$ is almost internal to the non-locally modular minimal type $p$. By the Zilber Trichotomy for ACF-relics (i.e. the Restricted Trichotomy Theorem, \cite{CaHaYe}), there is a $\mathcal G$-interpretable algebraically closed field internal to $p$. Thus, $G$ is almost internal to $F$ as well.
    
    Now a theorem of Hrushovski \cite[Proposition 2.25]{HrRid} shows that almost internality of a group can always be explained by a finite subgroup: that is, there is a finite subgroup $H\leq G$ such that $G/H$ is internal to $F$ (the proof of this result essentially appears in \cite[Lemma 4.3]{ZilJac}). Let $m=|H|$. Then the map $x\mapsto mx$ is well-defined on cosets of $H$, and by divisibility it defines a surjection $G/H\rightarrow G$. So $G$ is internal to $G/H$, and by transitivity, $G$ is internal to $F$. Finally, combining internality with elimination of imaginaries in $F$ gives an interpretable injection $G\rightarrow F^n$, so that Fact \ref{F: full characterization} applies.
\end{proof}

By Fact \ref{F: type characterization of full}, Theorem \ref{T: main} now amounts to determining when a divisible abelian group relic is almost internal to a non-locally modular minimal type. This is essentially the content of Hrushovski's socle theorem discussed in the introduction, and will also be our main focus moving forward. The proof will exploit the calculus of internality, orthogonality, and stable embededness (see the appendix for a discussion of these notions).

 \section{Groups Generated by Minimal Types}

    Suppose $A$ is an abelian variety over an algebraically closed field $K$, and $X$ is an irreducible closed curve in $A$. Then after a translation, $X$ generates an abelian subvariety of $A$ in finitely many steps. One can show this dimension-theoretically: the subvarieties $X$, $X+X$, $X+X+X$, ... have dimensions between 0 and $\dim(A)$, and so they must stabilize. Once they do, the resulting sum is easily seen to be a coset of the appropriate abelian subvariety.
    
    The same result and proof work in the model-theoretic setting, and we will need this. \textbf{Fix a sufficiently saturated expansion of an abelian group of finite Morley rank, say $\mathcal G=(G,+,...)$.} (Equivalently, we work in a structure as in the appendix, assuming that a commutative group structure is part of its basic language). We will show that up to translation, every minimal type in $G$ generates a definable subgroup of $G$. This is surely well-known to model theorists (see, e.g, Lascar's overview \cite[\S 7]{LasML} in \cite{BousML}), so we only give a quick proof sketch.
    
    \begin{lemma}\label{L: group generated by type} Let $p$ be a minimal type of an element of $G$. Then there is a unique subgroup $H\leq G$ such that:
    \begin{enumerate}
        \item $H$ is definable and connected.
        \item Every element of $H$ is a $\mathbb Z$-linear combination of realizations of $p$.
        \item The realizations of $p$ are contained in a single coset of $H$. 
    \end{enumerate}
    \end{lemma}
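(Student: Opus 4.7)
The plan is to realize $H$ as the connected component of a type stabilizer, then establish uniqueness via a connectedness obstruction. For existence, I pick independent $a, b \models p$ and set $q := \tp(a - b)$, which is stationary as a pushforward of the stationary type $p \otimes p$. Let $H := \operatorname{Stab}(q)^0$. Standard stable group theory in finite Morley rank says $H$ is a definable connected subgroup, and that $q$ is the generic type of some coset $c + H$; this handles (1).

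The crucial step is to show $c \in H$, so that $q = \Gen(H)$. To do this, I take three mutually independent $a_1, a_2, a_3 \models p$; each pairwise difference $a_i - a_j$ realizes $q$ and so lies in $c + H$. Applying this to the identity $(a_1 - a_2) + (a_2 - a_3) = a_1 - a_3$ forces $2c + H = c + H$, whence $c \in H$. With this in hand, (3) follows by picking, for any $b, b' \models p$, a realization $a \models p$ independent from both (saturation), observing that $a - b$ and $a - b'$ lie in $H$, and subtracting to get $b - b' \in H$. And (2) follows because every element of a connected group of finite Morley rank is a sum of two generic elements, each of which here realizes $q$ and so has the form $a - b$ with $a, b \models p$, exhibiting every $h \in H$ as a $\mathbb{Z}$-linear combination of realizations of $p$.

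For uniqueness, suppose $H'$ also satisfies (1)--(3). Fix $a_1 \models p$ and take any $h \in H$. Property (2) writes $h = \sum c_i a_i$ with $c_i \in \mathbb{Z}$ and $a_i \models p$, and the reorganization $h = (\sum c_i) a_1 + \sum c_i (a_i - a_1)$ places the second summand in $H'$ by (3) for $H'$. Hence the image of $H$ in $G/H'$, namely $H / (H \cap H')$, is contained in the cyclic subgroup generated by $a_1 + H'$. But this image is a connected definable group of finite Morley rank; in the saturated ambient, such a group has uncountably many realizations whenever its rank is positive, so it cannot embed into a countable (cyclic) abstract subgroup. Therefore $H / (H \cap H')$ is trivial, giving $H \subseteq H'$, and symmetrically $H = H'$. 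The main anticipated difficulty is precisely the three-realization argument showing $c \in H$; the remaining steps are routine applications of finite Morley rank machinery combined with the cyclic-group obstruction in the final paragraph.
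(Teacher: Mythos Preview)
Your existence argument has a genuine gap. You assert as ``standard stable group theory'' that $q=\tp(a-b)$ is the generic type of a coset of $H=\operatorname{Stab}(q)^0$. This amounts to $U(\operatorname{Stab}(q))=U(q)$, which is \emph{not} automatic for stationary types in groups of finite Morley rank, and it can fail for your particular $q$. Take $G=K^3$ (additive, $K$ algebraically closed of characteristic $0$) with the full field-induced structure, and let $p$ be the generic type of the twisted cubic $C=\{(t,t^2,t^3):t\in K\}$. For independent $a=(s,s^2,s^3)$ and $b=(t,t^2,t^3)$ one recovers $s$ and $t$ from $a-b$, so $U(q)=2$; but $\overline{C-C}$ is the irreducible degree-$4$ hypersurface $4xz=3y^2+x^4$, which is not a translate of a plane, so $\operatorname{Stab}(q)$ has rank strictly less than $2$ and $q$ is not the generic type of any coset of $H$. (The correct $G(p)$ here is all of $K^3$.) The paper's proof handles exactly this issue by passing to $q_n=\tp(a_1+\cdots+a_n)$ and waiting until $U(q_n)$ stabilizes; the rank stabilization is precisely what forces $U(\operatorname{Stab}(q_n))\geq U(q_n)$, since translation by a realization of $q_n$ then carries $q_n$ generically onto $q_{2n}$, a type of the same rank.

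Your three-realization trick (showing $c\in H$) and your uniqueness argument are both correct---indeed the paper omits uniqueness entirely---but they sit downstream of the broken step. The natural repair is to iterate: replace $q$ by the type of a sum of enough independent copies of $a-b$ until the rank stabilizes, and then run the rest of your argument; at that point you have essentially reproduced the paper's proof.
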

    \begin{proof}
    Without loss of generality, $p$ is a type over $\emptyset$ (otherwise, add constants to the language). Uniqueness is a straightforward and omitted (as we will only need existence). 

    For existence, let $a_1,a_2,...$ be a sequence of independent realizations of $p$. For each $n$, let $q_n=\tp(a_1+...+a_n)$. Each $q_n$ is stationary, and the ranks $U(q_n)$ are non-decreasing and uniformly bounded by the rank of $G$. Thus there are $n$ and $d$ such that $U(q_m)=d$ for all $m\geq n$.

    It follows by rank considerations that for $c\models q_n$, the translation of $q_n$ by $c$ agrees generically with $q_{2n}$. In particular, any two realizations of $q_n$ generically induce the same translation of $q_n$; and thus their difference belongs to $\operatorname{Stab}(q_n)$. It follows that $U(\operatorname{Stab}(q_n))$ is at least as large as $U(q_n)$; and this easily implies that $U(q_n)$ is the generic type of a coset of $\operatorname{Stab}(q_n)$ (in particular, we get that $\operatorname{Stab}(q_n)$ is connected). One then easily checks that $H=\operatorname{Stab}(q_n)$ satisfies the requirements of the lemma. In particular, note that every element of $H$ is the difference of two realizations of $q_n$, and thus is a linear combination of $2n$ realizations of $p$.
            \end{proof}

            \begin{definition}\label{D: G(p)}
                Let $p$ be a minimal type of an element of $G$. We call the group constructed in Lemma \ref{L: group generated by type} the \textit{connected group coset-generated by $p$}, and denote it $G(p)$.
            \end{definition}

            Recall that Zilber's theorem on Jacobians (\cite{ZilJac}) only works for curves of genus at least 2 -- essentially because in genus 1, one only recovers an elliptic curve with its pure group structure. A similar phenomenon holds in the abstract setting. Namely, one might think of Lemma \ref{L: stab vs nlm} below as an abstract analog of Zilber's theorem:

            \begin{lemma}\label{L: stab vs nlm} Let $a\in G$ be an element such that $p:=\tp(a/A)$ is minimal for some parameter set $A$. Then the following are equivalent:
            \begin{enumerate}
                \item $U(G(p))=1$.
                 \item $p$ is the generic type of a coset of a connected subgroup of $G$.
                \item $\operatorname{Stab}(p)$ is infinite.
               
            \end{enumerate}
            Moreover, if the equivalent conditions (1)-(3) fail, then $p$ is not locally modular.
            \end{lemma}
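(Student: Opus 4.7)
The plan is to prove the three equivalences in a cycle $(1)\Rightarrow(2)\Rightarrow(3)\Rightarrow(1)$, each step being essentially bookkeeping with $U$-rank and with the group $G(p)$ produced by Lemma \ref{L: group generated by type}, and then to handle the ``moreover'' clause by quoting the standard fact that locally modular minimal types realized inside a group of finite Morley rank always have infinite stabilizer.

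For $(1)\Rightarrow(2)$: if $U(G(p))=1$, then by Lemma \ref{L: group generated by type}, $G(p)$ is connected of rank $1$ and the realizations of $p$ lie in a single coset $c+G(p)$. Since $G(p)$ is connected and $A$-definable, this coset has a unique stationary generic type over $A$ of $U$-rank $1$; as $p$ is also a rank-$1$ stationary type over $A$ concentrated in the coset, the two types coincide. For $(2)\Rightarrow(3)$: if $p$ is the generic of $c+H$ with $H$ connected, then $H$ fixes the coset setwise and hence fixes its unique generic type, so $H\leq \operatorname{Stab}(p)$ is infinite. For $(3)\Rightarrow(1)$: by finite Morley rank, $\operatorname{Stab}(p)$ is definable; since translation by a generic element of $\operatorname{Stab}(p)^0$ sends a generic realization of $p$ to another realization of $p$, one has $U(\operatorname{Stab}(p))\leq U(p)=1$, so $H:=\operatorname{Stab}(p)^0$ has rank exactly $1$. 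A rank computation shows that $H$ acts generically transitively on the realizations of $p$, so $p$ is the generic type of a single coset of $H$; the uniqueness clause of Lemma \ref{L: group generated by type} then forces $G(p)=H$, giving $U(G(p))=1$.

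For the ``moreover'' clause, I would argue the contrapositive: if $p$ is locally modular and minimal, then $\operatorname{Stab}(p)$ is infinite. This is a classical application of the Hrushovski--Pillay group configuration theorem: take two independent realizations $a,b\models p$ in $G$ and consider the point $a+b$; local modularity of $p$ lets one close off the triangle $(a,b,a+b)$ into a group configuration, which must be explained by a type-definable connected rank-$1$ subgroup $H\leq G$ with $p$ the generic of a coset of $H$. In particular $H\leq \operatorname{Stab}(p)$, which is therefore infinite. Alternatively, since $\mathcal{G}$ already has the group $G$ in the language, one can directly quote the standard fact (see, e.g., \cite{LasML}) that any locally modular minimal type realized in a group of finite Morley rank is, after possibly adding parameters, the generic of a coset of a connected rank-$1$ definable subgroup; this is exactly condition (2).

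The main obstacle is the ``moreover'' clause, as the equivalences themselves are essentially formal manipulations with stabilizers and ranks; everything rests on invoking the correct form of the group configuration / locally modular type theorem for groups of finite Morley rank. The only subtlety in the equivalences is verifying that $\operatorname{Stab}(p)^0$ does act generically transitively on realizations of $p$ in step $(3)\Rightarrow(1)$, which reduces to a clean $U$-rank additivity computation using that fibers of the action map $H\times p(\mathfrak{C})\to p(\mathfrak{C})$ are generically finite.
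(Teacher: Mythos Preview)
Your cycle $(1)\Rightarrow(2)\Rightarrow(3)\Rightarrow(1)$ is correct and matches the paper's argument essentially line for line; the paper also invokes Lemma~\ref{L: group generated by type} for $(1)\Rightarrow(2)$, the obvious coset-stabilizer observation for $(2)\Rightarrow(3)$, and the identification $G(p)=\operatorname{Stab}(p)$ for $(3)\Rightarrow(1)$.

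The difference is in the moreover clause. Your first proposed route---building a group configuration from the triangle $(a,b,a+b)$---is not a proof as written. Local modularity does not by itself ``close off'' such a triangle into a six-point group configuration, and even if it did, the group configuration theorem produces an interpretable group, not a priori a subgroup of $G$ through which $p$ factors as a coset generic. Your second route (quote the standard fact) is fine as a citation, but it hides exactly the step that needs justification.

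The paper's argument is cleaner and worth knowing: rather than invoking a configuration theorem, it localizes to $G(p)$. Since $G(p)$ is generated by realizations of $p$, it is internal to $p$; hence every minimal type in $G(p)$ is non-orthogonal to $p$ and therefore locally modular. So $G(p)$ is a $1$-based group of finite Morley rank, and now Hrushovski--Pillay \cite{HrPi87} applies directly: every stationary type in $G(p)$ is the generic of a coset of a connected definable subgroup. Applying this to a translate of $p$ lying in $G(p)$ and translating back gives condition~(2). The point you were missing is this passage to $G(p)$---that is what makes the $1$-based group machinery available, since $G$ itself need not be $1$-based.
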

            \begin{proof} Again, this should be well-known to model theorists, so we give a sketch. The proof relies on another well-known model-theoretic fact about stable groups (the classification of definable sets in 1-based groups, \cite{HrPi87}).

                (1)$\rightarrow$(2) By construction, $p$ is contained in a coset of $G(p)$. By assumption, $G(p)$ is connected of U-rank 1. This is only possible if $p$ is the generic type of the coset containing it.
                
                (2)$\rightarrow$(3) If $p$ is the generic type of a coset of $H$, then $U(H)=U(p)=1$, so $H$ is infinite; and moreover $H$ stabilizes each of its cosets, so in particular stabilizes $p$.
                
                (3)$\rightarrow$(1) If $p$ is minimal and has infinite stabilizer, the only possibility is that $p$ is the generic type of a coset of its stabilizer (so in particular $\operatorname{Stab}(p)$ is connected of U-rank 1). But then $\operatorname{Stab}(p)$ is precisely $G(p)$, as is easy to check. So $U(G(p))=U(\operatorname{Stab}(p)=1$. 

                For the `moreover clause', we show the contrapositive. So, assume $p$ is locally modular. Since $G(p)$ is generated by $p$, it is internal to $p$, and thus every minimal type internal to $G(p)$ is locally modular. It now follows from \cite{HrPi87} that $G(p)$ has only \textit{affine structure}: every stationary type in $G(p)$ is the generic type of a coset of a connected subgroup. 
                
                In particular, this holds of some translate of $p$ into $G(p)$, and thus (translating back) the same holds of $p$. So we have verified condition (2), and (1) and (3) follow.
            \end{proof}

\section{The Setting}\label{S: ACP}

We now move toward the proofs of Theorems \ref{T: main} and \ref{T: general}.

\begin{notation} From now on, whenever $V$ is a variety over the algebraically closed field $K$, we abuse notation by abbreviating the set $V(K)$ as simply $V$.
\end{notation}

\begin{assumption}\label{A: setting} \textbf{Throughout Sections 6-9 (until the proof of Theorem \ref{T: main}), fix the following:
\begin{enumerate}
    \item An uncountable algebraically closed field $K$.
    \item A semiabelian variety $G$ over $K$. 
    \item A $K$-relic $\mathcal G=(G,+,...)$ expanding the group structure of $G$. We assume the language of $\mathcal M$ is countable.
    \item A structure $\mathcal K=(K,+,\cdot,...)$, expanding the field structure on $K$ by countably many constants capable of defining $\mathcal G$. That is, we assume that $G$, its group operation, and all basic relations of $\mathcal G$, are $\emptyset$-definable in $\mathcal G$.
    \end{enumerate}}
\end{assumption}

\begin{remark} As we go, we may add countably many additional constant symbols to the languages of $\mathcal G$ and $\mathcal K$.
\end{remark}

    Assumption \ref{A: setting}(4) gives, for example, that two tuples with the same $\mathcal K$-type also have the same $\mathcal G$-type. Moreover, if $a$ is algebraic over $b$ in the sense of $\mathcal G$, then $a$ is also algebraic over $b$ in the sense of $\mathcal K$.

    As relics of $K$, $\mathcal K$ and $\mathcal G$ have finite Morley rank. Moreover, since $K$ is uncountable and the languages of $\mathcal G$ and $\mathcal K$ are countable, it follows that $\mathcal K$ and $\mathcal G$ are saturated. Thus, we can use the machinery developed in the previous sections.

    By default, our model-theoretic analysis will take place in $\mathcal G$. However, we will occasionally use $\mathcal K$ as well, particularly when discussing dimension and genericity. Thus, we clarify:

    \begin{notation} Throughout, we use the prefixes $\mathcal G$ and $\mathcal K$ to distinguish model-theoretic terms in the two structures (e.g. $\mathcal G$-definable, $\mathcal K$-definable, $\mathcal G$-algebraic, $\mathcal K$-algebraic, etc.). If we wish to distinguish the parameters a set is defined over, we use notation such as $\mathcal G(A)$-definable. We use $\dim$ for dimension in the sense of $\mathcal K$ (that is, in the sense of varieties over $K$), and $U$ for the model-theoretic analog in $\mathcal G$. 
    \end{notation}

    In general, there may be no relationship between genericity in the senses of $\mathcal K$ and $\mathcal G$. However, there is an implication in the case of groups, which we will use repeatedly:

    \begin{lemma}\label{L: independence transfer} Let $N\leq G$ be a $\mathcal G(A)$-definable subgroup, and let $g\in N$. If $g$ is $\mathcal K$-generic in $N$ over $A$, then $g$ is $\mathcal G$-generic in $N$ over $A$.
    \end{lemma}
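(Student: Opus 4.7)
The plan is to argue by contradiction: suppose $g$ is $\mathcal{K}$-generic in $N$ over $A$ but not $\mathcal{G}$-generic. Then by the formula-based characterization of $U$-rank in a finite Morley rank structure, there is a $\mathcal{G}(A)$-definable set $X \subseteq N$ containing $g$ with $U(X) < U(N)$. The first observation is that, since every $\mathcal{G}$-definable set is $\mathcal{K}(A)$-definable, $X$ has a well-defined $\mathcal{K}$-dimension; and the hypothesis that $g \in X$ is $\mathcal{K}$-generic in $N$ over $A$ forces $\dim X = \dim N$.

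The heart of the argument is then to contradict the strict $U$-rank inequality by covering $N$ with finitely many translates of $X$. Concretely, I would establish an algebraic-geometric covering lemma: \emph{any $\mathcal{K}$-constructible subset $X \subseteq N$ of full $\mathcal{K}$-dimension satisfies $N \subseteq S + X$ for some finite $S \subseteq N$.} Granted this, the $\mathcal{G}$-definable translation action on $N$ gives $U(s + X) = U(X)$ for each $s$, so the finite union $S + X$ has $U$-rank equal to $U(X)$; combined with $N \subseteq S + X$ this forces $U(N) \leq U(X)$, contradicting the inequality $U(X) < U(N)$.

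The one step with real content is the covering lemma. I would prove it using the structure of $N$ as a (possibly disconnected) algebraic subgroup of the semiabelian variety $G$: $N$ has finitely many Zariski irreducible components, each a coset of the identity component $N^{0}$. Since $\dim X = \dim N = \dim N^{0}$, some component $C$ meets $X$ in a subset of full dimension in $C$; being constructible and Zariski dense in the irreducible $C$, this intersection contains a nonempty Zariski open subset of $C$. The standard fact that any nonempty open subset of a connected algebraic group covers it by finitely many translates then yields a finite cover of $C$ by translates of $X$, which I extend to all of $N$ by translating through coset representatives. This is where all the algebraic geometry goes; the rest of the proof is a transparent transfer of $\mathcal{K}$-dimension to $\mathcal{G}$-rank via the group operation.
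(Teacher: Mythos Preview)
Your argument is correct and rests on the same core idea as the paper's: the group-theoretic characterization of genericity via finite translate-covers. The difference is in how the two proofs deploy it. The paper simply cites the general stable-groups fact (e.g.\ Poizat, \emph{Stable Groups}, Lemma 2.5) that a definable subset $Y\subseteq N$ extends to a generic type if and only if finitely many translates of $Y$ cover $N$; since this characterization is purely group-theoretic, it holds simultaneously in $\mathcal G$ and in $\mathcal K$. Thus if $g$ is not $\mathcal G$-generic, one gets a $\mathcal G(A)$-definable $Y\ni g$ with no finite translate-cover of $N$, and the \emph{same} $Y$ witnesses non-genericity in $\mathcal K$. No algebraic geometry is invoked at all.

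Your version instead establishes the finite translate-cover on the $\mathcal K$-side by hand, using the Zariski topology of $N$ and the standard fact that a dense open in a connected algebraic group covers it in finitely many translates. This works, but it reproves for algebraic groups a special case of the abstract fact you then implicitly use on the $\mathcal G$-side (that a finite union of translates of $X$ has $U$-rank equal to $U(X)$, hence $X$ is generic in $\mathcal G$). The paper's route is shorter and more symmetric: apply the same characterization in both structures and observe that the relevant property---``no finite set of translates covers $N$''---is literally the same statement in each.
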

    \begin{proof}
        This follows because generic types in stable groups admit a purely group-theoretic characterization: a definable set $Y\subset N$ is generic in $N$ (i.e. extends to a generic type) if and only if finitely many translates of $Y$ cover $N$ (see \cite[Lemma 2.5]{PoiGroups}). Now if $g$ is not $\mathcal M$-generic over $A$, there is a $\mathcal G(A)$-definable $Y\subset N$ containing $g$ such that no finite set of translates of $Y$ contain $G$. All of this holds also in the structure $\mathcal K$, so $g$ is not $\mathcal K$-generic in $N$ over $A$ either.
    \end{proof}

\section{The Full Socle}

Our general goal is to show that, if $\mathcal G=(G,+,X)$ for $X$ as in Theorem \ref{T: general}, then $G$ is generated by full $\mathcal G$-definable subgroups; and that if the conditions of Theorem \ref{T: main} are met, there is only one such subgroup. Our main tool is the \textit{full socle} that will be defined below. 
% (a variant of the `socle' used by Hrushovski to prove the Mordell Lang Conjecture for function fields). In this section we define the full socle.

The following facts are well-known:

    \begin{fact}\label{L: definable subgroups are abelian subvarieties} Let $H\leq G$ be a $\mathcal G$-definable subgroup. 
    \begin{enumerate}
        \item $H$ is Zariski closed, so is an algebraic subgroup.
        \item $H$ is connected in the sense of model theory if and only if it is connected as an algebraic group.
    \end{enumerate}
    \end{fact}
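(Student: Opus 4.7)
Both parts reduce to classical facts about algebraic groups, exploiting that every $\mathcal{G}$-definable subset of $G$ is $K$-constructible (since $\mathcal{G}$ is a $K$-relic and $H \subseteq G$).

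For part (1), the plan is to pass to the Zariski closure $\bar{H}$ of $H$ in $G$, which is a closed algebraic subgroup by the standard continuity argument. Let $\bar{H}^0$ denote its identity component. Since $\bar{H}$ is the disjoint union of finitely many cosets of $\bar{H}^0$ (its irreducible components) and $H$ is dense in $\bar{H}$, one checks that $H \cap \bar{H}^0$ is Zariski-dense in $\bar{H}^0$. Being $K$-constructible and dense in the irreducible variety $\bar{H}^0$, it must contain a nonempty Zariski-open $U \subseteq \bar{H}^0$. For any $g \in \bar{H}^0$, irreducibility forces $U \cap (g - U) \neq \emptyset$, so $g = u + u'$ with $u, u' \in U \subseteq H$, whence $g \in H$. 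Thus $\bar{H}^0 \subseteq H$, and since $H$ is then a finite union of cosets of $\bar{H}^0$, it is Zariski closed.

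For part (2), the direction ``algebraically connected implies model-theoretically connected'' is immediate from (1): any $\mathcal{G}$-definable finite-index subgroup of $H$ is Zariski closed of the same dimension as $H$, and thus equals $H$ by irreducibility.

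The converse is the delicate direction and the main obstacle. The plan is to exhibit the algebraic identity component $H^0$ as a $\mathcal{G}$-definable subgroup of $H$; once this is done, model-theoretic connectedness of $H$ immediately forces $H = H^0$. The key trick is divisibility: because $G$ is semiabelian, the connected algebraic subgroup $H^0$ is itself semiabelian, hence divisible as an abstract group. Setting $n = [H : H^0]$, we have $[n]H \subseteq H^0$ (since $H/H^0$ has order $n$) and $[n]H \supseteq [n]H^0 = H^0$ (by divisibility), giving $H^0 = [n]H$. This is the image of the plainly $\mathcal{G}$-definable map $x \mapsto n x$, so $H^0$ is $\mathcal{G}$-definable, completing the argument.
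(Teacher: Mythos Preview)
Your proof is correct and matches the paper's approach: for (1) the paper simply cites \cite[Lemma 7.4.9]{MaBook}, whose content is precisely the constructible-dense-open argument you spell out; for (2) the paper observes that in both senses connectedness is equivalent to divisibility (for subgroups of a semiabelian variety), and your identity $H^0=[n]H$ is exactly the computation underlying that equivalence.
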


    Fact \ref{L: definable subgroups are abelian subvarieties}(1) is \cite[Lemma 7.4.9]{MaBook},
    % follows from quantifier elimination in algebraically closed fields, 
    while (2) follows since in both cases (algebraic and model-theoretic), connectedness is equivalent to divisibility\footnote{The equivalence of connectedness and divisibility is specific to subgroups of semi-abelian varieties, since all connected algebraic subgroups are divisible; in general there are connected groups of finite Morley rank that are not divisible.}

    By Fact \ref{L: definable subgroups are abelian subvarieties}, we will use the word \textit{connected} unambiguously when referring to definable subgroups of $G$.

    \begin{definition}
        A \textit{full subgroup of $G$} is a $\mathcal G$-definable subgroup $H\leq G$ which is full in $K$ with respect to $\mathcal G$ (that is, every constructible subset of every $H^n$ is also $\mathcal G$-definable). A \textit{maximal connected full subgroup} is a connected full subgroup not contained in any larger connected full subgroup.
    \end{definition}
    
    Note that every connected full subgroup $H$ is contained in a maximal connected full subgroup -- of all connected full subgroups of $G$ containing $H$, choose one of maximal dimension. (In fact, this subgroup turns out to be unique -- provided $H$ is non-trivial -- as a consequence of Lemma \ref{L: H_i orthogonal}(2).)

    The following basic properties are well known and easy to check using standard model theoretic techniques (see, e.g.,  \cite[\S 7]{LasML} for the proof in a closely related setting): 

    \begin{lemma}\label{L: H_i orthogonal}
        Let $H_1,...,H_n$ be distinct maximal connected full subgroups, and $H=H_1+...+H_n$. 
        \begin{enumerate}
            \item If $p$ is a minimal type almost internal to $H$, then there is exactly one $i$ such that $p$ is almost internal to $H_i$. 
            \item The $H_i$ are pairwise orthogonal.
            \item The sum map $H_1\times...\times H_n\rightarrow H$ is an isogeny.
            \item $\dim(H)=\sum_{i=1}^n\dim(H_i)$. 
        \end{enumerate}
    \end{lemma}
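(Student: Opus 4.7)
The plan is to prove the four clauses in the order (2), (1), (3), (4), since pairwise orthogonality is the key input that powers the rest. For (2) I argue by contradiction. By Fact \ref{F: full characterization}, each $H_i$ carries a $\mathcal G$-interpretable algebraically closed field $F_i$ together with a $\mathcal G$-interpretable injection $H_i \hookrightarrow F_i^{n_i}$; writing $q_i$ for the generic type of $F_i$, the embedding makes the generic type of $H_i$ non-orthogonal to $q_i$, so $H_i \perp H_j$ iff $q_i \perp q_j$, and $q_i$ is non-locally modular because $F_i$ is an algebraically closed field. Suppose for contradiction $q_i \not\perp q_j$ for some $i \neq j$. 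The standard uniqueness-of-the-field phenomenon for non-orthogonal $\mathcal G$-interpretable algebraically closed fields in a saturated stable structure yields a $\mathcal G$-definable identification of $F_i$ and $F_j$ with a common $\mathcal G$-interpretable field $F$. Then $H_i + H_j$ is almost $\mathcal G$-internal to $F$, and the divisibility-plus-finite-kernel trick from the proof of Fact \ref{F: type characterization of full} upgrades this to a $\mathcal G$-interpretable injection $H_i + H_j \hookrightarrow F^k$. By Fact \ref{F: full characterization}, $H_i + H_j$ is a full subgroup; it is connected and strictly contains each of $H_i$, $H_j$ (by distinctness and maximality), contradicting maximality. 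Hence $H_i \perp H_j$ whenever $i \neq j$.

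For (1), let $p$ be a minimal type almost $\mathcal G$-internal to $H$. Unpacking $H = H_1 + \cdots + H_n$, $p$ is almost internal to $H_1 \cup \cdots \cup H_n$, and then to $q_1 \cup \cdots \cup q_n$ via the embeddings above. Since the $q_i$ are pairwise orthogonal and $p$ is minimal, the standard fact in stable theories is that $p$ must be non-foreign to (and hence, by minimality, almost internal to) a unique $q_i$. Transitivity of almost internality, together with the embedding $H_i \hookrightarrow F_i^{n_i}$ and its inverse on the image, then yields $p$ almost internal to $H_i$. Uniqueness at the group level is immediate: if $p$ were almost internal to both $H_i$ and $H_j$ for $i \neq j$, then $q_i \not\perp q_j$, contradicting (2).

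For (3), the sum map $\sigma \colon H_1 \times \cdots \times H_n \to H$ is surjective, so I need its kernel $N$ to be finite. Suppose not, and let $N^0$ be its connected component, a positive-dimensional algebraic subgroup. Some projection $\pi_i(N^0) \le H_i$ has positive dimension, and the relation $h_i = -\sum_{j \ne i} h_j$ on $N$ gives $\pi_i(N^0) \subseteq H_i \cap \sum_{j \ne i} H_j$. Let $L$ be the connected component of this intersection; then $\dim L > 0$. By Lemma \ref{L: independence transfer} a $\mathcal K$-generic point of $L$ is $\mathcal G$-generic, and its $\mathcal G$-algebraic closure contains a realization of some minimal type $p$; by construction $p$ is almost $\mathcal G$-internal to $L$, hence simultaneously to $H_i$ and to $\sum_{j \ne i} H_j$. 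Applying (1) to the second sum produces a $j \ne i$ with $p$ almost internal to $H_j$, contradicting the uniqueness of $i$ in (1). Finally, (4) is immediate from (3), since an isogeny preserves dimension: $\dim H = \dim(H_1 \times \cdots \times H_n) = \sum_i \dim H_i$.

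I expect the main obstacle to lie entirely inside (2): converting non-orthogonality of two $\mathcal G$-interpretable algebraically closed fields into a $\mathcal G$-definable isomorphism between them, and then upgrading almost internality of the divisible group $H_i + H_j$ to an outright $\mathcal G$-interpretable embedding into a power of a single field. Both moves draw on the Restricted Trichotomy and the divisibility/finite-kernel argument already used to prove Fact \ref{F: type characterization of full}, and together they are what allow maximality of the $H_i$ to force orthogonality. Once (2) is in hand, (1), (3), and (4) are essentially corollaries obtained by standard applications of the orthogonality and internality calculus in stable groups, together with Lemma \ref{L: independence transfer}.
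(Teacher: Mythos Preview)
The paper does not actually give a proof of this lemma; it is stated as ``well known and easy to check using standard model theoretic techniques'' with a reference to \cite[\S 7]{LasML}. So your proposal is not competing against a paper proof but rather supplying the details the paper omits.

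Your argument is essentially correct and is exactly the kind of standard argument the paper has in mind. The key idea in (2)---that non-orthogonality of $H_i$ and $H_j$ would force $H_i+H_j$ to be full via a common interpretable field, contradicting maximality---is the right one, and (3) and (4) follow from (1) and (2) as you describe.

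There is one small slip in your argument for (1). Having shown that $p$ is almost internal to $q_i$, you write that $p$ is then almost internal to $H_i$ ``via the embedding $H_i\hookrightarrow F_i^{n_i}$ and its inverse on the image.'' But that embedding (and its inverse) witnesses that $H_i$ is internal to $F_i$, not the converse; it does not by itself show that $q_i$ (or $F_i$) is almost internal to $H_i$. The correct justification is that, since $H_i$ is full, the induced structure on $H_i$ already interprets an algebraically closed field internal to $H_i$; any two non-orthogonal $\mathcal G$-interpretable algebraically closed fields are $\mathcal G$-definably isomorphic, so you may take $F_i$ itself to be internal to $H_i$. With this adjustment, transitivity gives $p$ almost internal to $H_i$ as you want. (Alternatively, avoid the detour through $q_i$ entirely: $p$ almost internal to $H$ implies $p$ almost internal to $\bigcup_i H_i$ via the sum map, so $p$ is non-orthogonal to some $H_i$; then any minimal type non-orthogonal to $H_i$ is non-orthogonal to a non-locally modular minimal type in $H_i$, hence almost internal to it, hence to $H_i$.)

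A minor remark on (3): the appeal to Lemma \ref{L: independence transfer} is harmless but unnecessary---all you need is a minimal $\mathcal G$-type realized in the infinite definable group $L$, and that exists by finite Morley rank alone. Also note that your application of (1) ``to the second sum'' $\sum_{j\neq i}H_j$ is formally a use of the same argument for a subcollection of the $H_j$, not a literal invocation of (1) as stated; this is fine but worth saying explicitly.
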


    In particular, Lemma \ref{L: H_i orthogonal}(4) implies that there are only finitely many maximal connected full subgroups of positive dimension.

    \begin{definition}\label{D: socle}
        Let $H_1,...,H_n$ be the distinct maximal connected full subgroups of $G$ of positive dimension. We call $H_1,...,H_n$ the \textit{full components of $\mathcal G$}. We call their sum $H=H_1+...+H_n$ the \textit{full socle of $\mathcal G$}. 
    \end{definition}

    \begin{notation}
        From now until the proof of Theorem \ref{T: main}, we fix $H_1,...,H_n$ and $H$ as in Definition \ref{D: socle}. (Note that at this stage, we do allow the case $n=0$; however, part of the content of Theorem \ref{T: general} will eventually be that $n\geq 1$).
    \end{notation}

    So by Lemma \ref{L: H_i orthogonal}, the $H_i$ are pairwise orthogonal and the sum map $H_1\times...\times H_n\rightarrow H$ is an isogeny. Note that each $H_i$ is $\acl(\emptyset)$-definable in $\mathcal G$, and $H$ is $\emptyset$-definable in $\mathcal G$ (this follows from automorphism invariance, but could also be assumed after naming finitely many constants). Note also that as a sum of connected groups, $H$ is connected. It is, in other words, the group generated by all maximal connected full subgroups. \textbf{To prove Theorems \ref{T: main} and \ref{T: general}, our main task is to show, under appropriate assumptions, that $H=G$, i.e. that $G$ is generated by its connected full subgroups.}

\section{The Algebraic Choice Property}

We now develop a key property of the full socle $H$. Roughly speaking, it says that to any non-empty definable subset of $H$, we can definably associate a canonical finite subset. This property gives a more elementary alternative (available only in our context) for some of the delicate analysis in Section 4 of Hrushovski's proof of the function-field Mordell-Lang, \cite{HrMordellLang}.

\begin{definition}
    Let $X$ be a $\mathcal G$-interpretable set.
    \begin{enumerate}
        \item Let $A\subset G^{eq}$. We say that $X$ has the \textit{algebraic choice property} (ACP) \textit{over $A$} if $X$ is $\mathcal G(A)$-interpretable, and whenever $A\subset B\subset\mathcal G^{eq}$ and $Y\subset X$ is $\mathcal G(B)$-interpretable, there is $y\in Y\cap\acl_{\mathcal G}(B)$.
        \item We say that $X$ \textit{has the ACP} if it has the ACP over some countable set $A$.
    \end{enumerate}
\end{definition}

The following are easy to check:

\begin{lemma}\label{L: ACP preservation}
    \begin{enumerate}
        \item Every $\mathcal G$-interpretable algebraically closed field has the ACP.
        \item If $X$ and $Y$ have the ACP, then so does $X\times Y$.
        \item If $f:X\rightarrow Y$ is a surjective $\mathcal G$-interpretable function, and $X$ has the ACP, then so does $Y$.
        \item If $f:X\rightarrow Y$ is a $\mathcal G$-interpretable function with finite fibers, and $Y$ has the ACP, then so does $X$. In particular, if $Y$ has the ACP, then so does every interpretable subset of $Y$.
    \end{enumerate}
    \end{lemma}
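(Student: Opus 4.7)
My plan would be to handle the four clauses in order, using nothing more than transitivity of $\acl_{\mathcal G}$, the behavior of interpretability under projections and fibers, and one external input for the field case.

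For (1), let $F$ be a $\mathcal G$-interpretable algebraically closed field, defined over a countable set $A\subset\mathcal G^{eq}$. I would show the ACP holds over $A$. Given $B\supseteq A$ and a nonempty $\mathcal G(B)$-interpretable $Y\subseteq F^n$, the set $Y$ is in particular definable in the field $F$ over the parameters $B$ (using that $F$ is interpreted in $\mathcal G$ from $A$). By model completeness of ACF, the relative algebraic closure of the parameters defining $Y$ is an elementary substructure of $F$, so $Y$ contains a point $y$ that is $F$-algebraic over $B$. Any such $y$ lies in a finite $F$-interpretable set over $B$, which is still finite and $\mathcal G$-interpretable over $B$, giving $y\in\acl_{\mathcal G}(B)$. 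The case of interpretable subsets of $F^n$ reduces to $F$ by (2) below (or is handled identically).

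For (2), assuming $X$ has ACP over $A_1$ and $Y$ over $A_2$, I take $A=A_1\cup A_2$ and verify ACP for $X\times Y$ over $A$. Given a nonempty $\mathcal G(B)$-interpretable $Z\subseteq X\times Y$, the projection $\pi_1(Z)$ is a nonempty $\mathcal G(B)$-interpretable subset of $X$, so it contains some $x\in\acl_{\mathcal G}(B)$ by ACP of $X$. The fiber $Z_x$ is then a nonempty $\mathcal G(Bx)$-interpretable subset of $Y$, so by ACP of $Y$ there is $y\in Z_x\cap\acl_{\mathcal G}(Bx)$. Transitivity of $\acl_{\mathcal G}$ gives $(x,y)\in Z\cap\acl_{\mathcal G}(B)$. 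Clause (3) is similar and easier: after enlarging $A$ so that both $X$ and $f$ are defined over $A$, a nonempty $\mathcal G(B)$-interpretable $Z\subseteq Y$ pulls back to a nonempty $\mathcal G(B)$-interpretable $f^{-1}(Z)\subseteq X$, which contains $x\in\acl_{\mathcal G}(B)$ by ACP of $X$; then $f(x)\in Z\cap\acl_{\mathcal G}(B)$ since $f$ is over $B$.

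For (4), with $f:X\to Y$ of finite fibers, enlarge $A$ so that $Y$ has ACP over $A$ and $f$ is defined over $A$. For $B\supseteq A$ and nonempty $\mathcal G(B)$-interpretable $Z\subseteq X$, the image $f(Z)\subseteq Y$ is a nonempty $\mathcal G(B)$-interpretable set, so contains some $y\in\acl_{\mathcal G}(B)$ by ACP of $Y$. Any $x\in f^{-1}(y)\cap Z$ lies in a nonempty \emph{finite} $\mathcal G(By)$-interpretable set, hence $x\in\acl_{\mathcal G}(By)\subseteq\acl_{\mathcal G}(B)$ by transitivity. The ``in particular'' clause follows by applying this to the inclusion $Z\hookrightarrow Y$.

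I do not expect any serious obstacle; the only nontrivial input is the ACF fact in (1), which is standard. The main bookkeeping point to be careful about is that the parameter sets $A$ witnessing ACP may need to be enlarged (countably) when combining structures or invoking $\mathcal G$-interpretability of $f$, but this is harmless since the definition only asks for \emph{some} countable $A$.
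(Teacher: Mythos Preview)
The paper gives no proof of this lemma beyond ``easy to check,'' and your argument is a correct and standard verification of exactly the kind the authors have in mind. One cosmetic slip: in clause (1) the ACP only asks about subsets $Y\subseteq F$, not $Y\subseteq F^n$, so you should drop the exponent (your own parenthetical already notes that $F^n$ would follow from (2)). Also, the appeal to ``model completeness of ACF'' is slightly oblique; the cleaner point is that $F$, being a $\mathcal G$-interpretable field inside a $K$-relic, is $K$-definably isomorphic to $K$ and hence strongly minimal even with its $\mathcal G$-induced structure, so any $\mathcal G(B)$-interpretable $Y\subseteq F$ is finite or cofinite---in either case it meets $\acl_{\mathcal G}(B)$ (the prime-field and field-algebraic elements of $F$ supply infinitely many points of $\acl_{\mathcal G}(A)$). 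Clauses (2)--(4) are exactly right.
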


    \begin{corollary} $H$ has the ACP.
    \end{corollary}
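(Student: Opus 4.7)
The plan is to derive the ACP for $H$ by bootstrapping from an interpretable algebraically closed field inside each full component $H_i$, then combining the $H_i$ via the isogeny from Lemma \ref{L: H_i orthogonal}(3). All four clauses of Lemma \ref{L: ACP preservation} will be used, essentially in the order they are listed.

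First, fix a full component $H_i$. Since $H_i$ is full in $K$ relative to $\mathcal G$, the reduct of $\mathcal G$ with universe $H_i$ (naming all $\mathcal G$-interpretable subsets of powers of $H_i$) is a full $K$-relic in the sense of Fact \ref{F: full characterization}. Applying that fact produces a $\mathcal G$-interpretable algebraically closed field $F_i$ together with a $\mathcal G$-interpretable injection $\iota_i : H_i \to F_i^{m_i}$ for some $m_i$. Now chain through Lemma \ref{L: ACP preservation}: clause (1) gives the ACP for $F_i$; iterating clause (2) promotes this to the ACP for $F_i^{m_i}$; and clause (4), applied to $\iota_i$ (whose fibers are singletons), transfers the ACP back to $H_i$.

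To combine, apply clause (2) one more time, across all $i$, to conclude that $H_1 \times \cdots \times H_n$ has the ACP. By Lemma \ref{L: H_i orthogonal}(3), the sum map $H_1 \times \cdots \times H_n \to H$ is an isogeny, hence a $\mathcal G$-interpretable surjection. Clause (3) then yields the ACP for $H$, as required.

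I do not anticipate a genuine obstacle; the proof is a formal chase through Lemma \ref{L: ACP preservation}, once we combine the definition of a full subgroup with Fact \ref{F: full characterization}. The only point that deserves verification is that Fact \ref{F: full characterization} truly applies to $H_i$ even though $H_i$ is not the universe of $\mathcal G$: but this is immediate, because the condition that $H_i$ be full in $K$ relative to $\mathcal G$ is exactly what is needed to realize $H_i$ as the universe of a full $K$-relic (namely the reduct described above), and any structure interpretable in that reduct is automatically interpretable in $\mathcal G$.
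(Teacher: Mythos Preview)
Your proof is correct and follows exactly the same route as the paper's: embed each $H_i$ into a power of an interpretable algebraically closed field via Fact \ref{F: full characterization}, chain through clauses (1), (2), (4) of Lemma \ref{L: ACP preservation} to get the ACP for each $H_i$, then apply (2) and (3) together with the sum isogeny to pass to $H$. Your extra remark about why Fact \ref{F: full characterization} applies to $H_i$ (by viewing it as the universe of its own full relic) is a helpful clarification the paper leaves implicit.
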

    \begin{proof} By Fact \ref{F: full characterization}, for each $i$ there is an interpretable embedding from $H_i$ into an affine space over an interpretable algebraically closed field. By Lemma \ref{L: ACP preservation}(1), (2), and (4), each $H_i$ has the ACP. By (2), $H_1\times...\times H_n$ has the ACP. Now apply (3) to the sum map $H_1\times...\times H_n\rightarrow H$, $H$ has the ACP.
    \end{proof}

    \begin{assumption}
        For the next 3 sections, we add a set of parameters witnessing the ACP for $H$. That is, we assume moving forward that $H$ has the ACP over $\emptyset$.
    \end{assumption}
    
    \section{Strong Rigidity}
    In model theory, a stable group is called \textit{strongly rigid} if every one of its definable subgroups is $\acl(\emptyset)$-definable (this means there are no infinite definable families of subgroups). Strong rigidity played a key role in Hrushovski's proof of the function field Mordell-Lang. And, indeed:

    \begin{lemma}\label{L: Rigid} $\mathcal G$ is strongly rigid.
    \end{lemma}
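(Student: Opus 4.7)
The plan is to derive strong rigidity from two inputs: by Fact~\ref{L: definable subgroups are abelian subvarieties}(1), every $\mathcal{G}$-definable subgroup of $G$ is already a closed algebraic subgroup; and the semiabelian variety $G$ admits only countably many closed algebraic subgroups. Once both are in hand, a saturation argument closes the lemma.

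First I would verify the countability claim. Writing $G$ as an extension $1\to T\to G\to A\to 1$ of an abelian variety $A$ by a torus $T$, any algebraic subgroup $N\leq G$ is determined by $N\cap T$ (corresponding to a sublattice of the character lattice of $T$), its image in $A$, and a bit of finite gluing data. Algebraic subgroups of $T$ are evidently countable. Algebraic subgroups of $A$ are countable as well: the connected components are abelian subvarieties, and these are parametrized by a countable set since $\mathrm{Hom}$-groups between abelian varieties are finitely generated free abelian groups; component groups are finite subgroups, hence lie in the (countable) torsion of $A$. Combining, the lattice of algebraic subgroups of $G$ is countable.

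Next comes the saturation argument. Let $H\leq G$ be $\mathcal{G}$-definable, and let $\lceil H\rceil\in\mathcal{G}^{\mathrm{eq}}$ be its canonical parameter, available by stability of $\mathcal{G}$. Suppose for contradiction that $\lceil H\rceil\notin\acl_{\mathcal{G}}(\emptyset)$. Then $\tp_{\mathcal{G}}(\lceil H\rceil/\emptyset)$ is non-algebraic, so by saturation of $\mathcal{G}$ (of cardinality $|K|>\aleph_0$) this type has uncountably many realizations. Each realization is the code of a distinct $\mathcal{G}$-definable subgroup of $G$, and by Fact~\ref{L: definable subgroups are abelian subvarieties}(1) each of those subgroups is algebraic. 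This produces uncountably many distinct algebraic subgroups of $G$, contradicting the preceding paragraph. Hence $\lceil H\rceil\in\acl_{\mathcal{G}}(\emptyset)$, so $H$ is $\acl_{\mathcal{G}}(\emptyset)$-definable.

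The only real content is the bookkeeping for the countability input; once that is in place, the model-theoretic part of the argument is automatic. In particular, the lemma does not require any invocation of the trichotomy, the full socle, or the ACP -- it is just a consequence of the translation of $\mathcal{G}$-definable subgroups into algebraic subgroups together with a cardinality count.
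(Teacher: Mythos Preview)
Your proof is correct. The paper's argument is shorter and runs along a parallel but slightly different track: it observes that failure of strong rigidity would produce an infinite $\mathcal G$-definable family of distinct subgroups, notes that such a family is then constructible (since $\mathcal G$ is a $K$-relic), and cites Hrushovski's result \cite[Lemma~4.10]{HrMordellLang} that the full algebraic structure on a semiabelian variety is itself strongly rigid, so no infinite constructible family of subgroups can exist. Your version replaces that citation with a direct count --- only countably many algebraic subgroups --- and then uses saturation of $\mathcal G$ to get a contradiction from a non-algebraic canonical parameter. Both routes ultimately rest on the same geometric input (rigidity of semiabelian varieties); the paper defers it to a reference, while you sketch a proof. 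Your approach is more self-contained and makes the cardinality mechanism explicit, at the cost of the bookkeeping for the countability claim; the paper's is more economical but leans on the cited lemma as a black box.
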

    \begin{proof} If not, there is an infinite $\mathcal G$-definable (thus constructible) family of distinct subgroups. But such a family cannot exist, since the full algebraic structure on $G$ (and on any semiabelian variety) is strongly rigid (see \cite[Lemma 4.10]{HrMordellLang}). 
    \end{proof}

    Strong rigidity is used to relate the stabilizer of a subvariety $X\subset G$ to the stabilizers of smaller pieces of $X$. The idea is that if we could cover $X$ by a family of smaller sets $X_t$ with large stabilizers, then the stabilizers $\operatorname{Stab}(X_t)$ cannot vary in a family, so generically many of them are the same; and this common infinite stabilizer will then stabilize all of $X$.
    
    The next two lemmas make the above paragraph precise:

    \begin{lemma}\label{L: stabilizer of extension lemma} Let $X$ be a $\mathcal G(\emptyset)$-definable, closed, irreducible subvariety of $G$ with finite stabilizer. Let $N$ be a $\mathcal G(A)$-definable subgroup of $G$. Let $a$ be $\mathcal K$-generic in $X$ over $\emptyset$, and let $g$ be $\mathcal K$-generic in $N$ over $Aa$. If $g+a\in X$, then $N$ is finite.
    \end{lemma}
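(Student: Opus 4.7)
The plan is to show that the hypothesis $g + a \in X$, combined with $\mathcal K$-genericity of $g$ in $N$ over $a$, forces a whole irreducible component $N_0$ of $N$ to live inside the translate $X - a$. I would then spread this property from $a$ across all of $X$ using irreducibility of $X$ and $\mathcal K$-genericity of $a$, and finally deduce that $N_0$ lies in the finite group $\operatorname{Stab}(X)$, forcing $N$ to be finite.

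First, I would apply strong rigidity (Lemma \ref{L: Rigid}) to reduce to the case where $N$, together with its finitely many irreducible components, is $\emptyset$-definable in $\mathcal K$. Strong rigidity gives that $N$ is $\acl_{\mathcal G}(\emptyset)$-definable, and hence so is its identity component $N^\circ$ and each of the finitely many cosets of $N^\circ$ making up $N$. Naming finitely many constants from $\acl_{\mathcal K}(\emptyset)$ makes these $\emptyset$-definable; crucially, this does not affect the $\mathcal K$-generic type of $a$ in $X$ over $\emptyset$, since $\acl$-constants preserve $\mathcal K$-dimension.

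Let $N_0$ be the irreducible component of $N$ containing $g$ (now $\emptyset$-definable). Since $g$ is $\mathcal K$-generic in $N$ over $Aa$, it is also $\mathcal K$-generic in $N_0$ over $a$. The set $(X - a) \cap N_0$ is a closed subvariety of $N_0$ containing $g$, so by irreducibility it equals all of $N_0$; equivalently, $a + N_0 \subseteq X$. To spread this, consider $C := \{y \in G : y + N_0 \subseteq X\} = \bigcap_{n \in N_0}(X - n)$, which is closed and $\emptyset$-definable. Since $a \in C \cap X$ and $a$ is $\mathcal K$-generic in $X$ over $\emptyset$, the $\emptyset$-definable closed subvariety $C \cap X$ of the irreducible $X$ has $\mathcal K$-dimension $\dim(X)$, and hence equals $X$. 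Thus $X + N_0 \subseteq X$. For each $n \in N_0$, the translate $X + n$ is a closed irreducible subvariety of $G$ of the same dimension as $X$ contained in $X$, so $X + n = X$ and $n \in \operatorname{Stab}(X)$. Hence $N_0 \subseteq \operatorname{Stab}(X)$, which is finite by hypothesis; so $N_0$ is finite, and so is $N^\circ$ (a translate of $N_0$). As a connected algebraic group, $N^\circ$ must then be trivial. Since $N/N^\circ$ is finite, $N$ itself is finite.

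The main obstacle is the spreading step where we deduce $C \cap X = X$: this argument uses $\mathcal K$-genericity of $a$ over $\emptyset$ and requires $C$ (hence $N_0$) to be $\emptyset$-definable. Since $a$ is only assumed generic over $\emptyset$ and not over $A$, reducing to the $\emptyset$-definable setting via strong rigidity is essential --- without it, $a$ could well lie in a proper $A$-definable closed subvariety of $X$, and $C \cap X$ could fail to contain all of $X$.
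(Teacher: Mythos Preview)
Your proof is correct and follows essentially the same approach as the paper: both use strong rigidity to make $N$ (effectively) $\emptyset$-definable, then exploit $\mathcal K$-genericity to force elements of $N$ into the finite stabilizer $\operatorname{Stab}(X)$. The only difference is organizational: the paper flips independence in one step (since $g$ is generic in $N$ over $Aa\cup\acl(\emptyset)$, $a$ and $g$ are independent, so $a$ is generic in $X$ over $g$, giving $g+X=X$ directly), whereas you do two separate spreading arguments---first over $N_0$ to get $a+N_0\subseteq X$, then over $X$ to get $X+N_0\subseteq X$.
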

    \begin{proof}
        By rigidity, $N$ is $\operatorname{acl}(\emptyset)$-definable (in $\mathcal G$, and thus also in $\mathcal K$). So $g$ is $\mathcal K$-generic in $N$ over both $Aa$ and $\acl(\emptyset)$, which implies that $a$ and $g$ are $\mathcal K$-independent over $\emptyset$. Thus $a$ is still $\mathcal K$-generic in $X$ over $g$ -- and since $g+a\in X$, it follows that $g+X$ generically coincides with $X$. Since $X$ and $g+X$ are closed and irreducible, we conclude that they are equal. That is, $g\in\operatorname{Stab}(X)$. By assumption $\operatorname{Stab}(X)$ is finite, so that $g\in\acl(\emptyset)$. But $g$ is $\mathcal K$-generic in $N$, so it follows that $\dim(N)=0$, i.e. $N$ is finite.
    \end{proof}

    We conclude:
    
    \begin{lemma}\label{L: stab of extension is finite} Let $X$ be a $\mathcal G(\emptyset)$-definable, closed, irreducible subvariety of $G$ with finite stabilizer. Let $a\in X$ be $\mathcal K$-generic over $\emptyset$.
    \begin{enumerate}
        \item If $Y\subset G$ is $\mathcal G$-definable and contains $a$, then the set-wise stabilizer $\operatorname{Stab}(Y)$ is finite.
        \item For any $\mathcal G$-algebraically closed set $B$, the stationary type $\tp_{\mathcal G}(a/B)$ has finite (model-theoretic) stabilizer.
    \end{enumerate}
    \end{lemma}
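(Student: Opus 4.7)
My plan is to derive both parts from Lemma \ref{L: stabilizer of extension lemma}. In each case I identify a definable subgroup $N$, take a $\mathcal K$-generic $g\in N$ (over $a$ and relevant parameters), check that $g+a\in X$, and then invoke Lemma \ref{L: stabilizer of extension lemma} to conclude that $N$ is finite.

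For part (2), set $p:=\operatorname{tp}_{\mathcal G}(a/B)$ and $N:=\operatorname{Stab}(p)$. A priori $N$ is $\mathcal G(B)$-definable, but by strong rigidity (Lemma \ref{L: Rigid}) it is in fact $\acl_{\mathcal G}(\emptyset)$-definable. Choose $g$ $\mathcal K$-generic in $N$ over $Ba$; by Lemma \ref{L: independence transfer}, $g$ is also $\mathcal G$-generic in $N$ over $Ba$, hence $\mathcal G$-independent from $a$ over $B$. Stationarity of $p$ together with $g\in\operatorname{Stab}(p)$ forces $g+a\models p$, and since $X\in p$ (as $X$ is $\mathcal G(\emptyset)$-definable and $a\in X$), we obtain $g+a\in X$. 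Lemma \ref{L: stabilizer of extension lemma}, applied with this $N$, then forces $N$ to be finite.

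For part (1), set $N:=\operatorname{Stab}(Y)$, which is $\acl_{\mathcal G}(\emptyset)$-definable by Lemma \ref{L: Rigid}. The natural attempt is to reduce to the case $Y\subseteq X$ by replacing $Y$ with $Y\cap X$: this remains $\mathcal G$-definable and contains $a$, and any $g$ $\mathcal K$-generic in $\operatorname{Stab}(Y\cap X)$ over $a$ now satisfies $g+a\in Y\cap X\subseteq X$, so Lemma \ref{L: stabilizer of extension lemma} at least forces $\operatorname{Stab}(Y\cap X)$ finite. To transfer finiteness back to $\operatorname{Stab}(Y)$, I would analyse the Zariski closure $\overline{Y}$: the connected component $\operatorname{Stab}(Y)^0$ stabilizes $\overline{Y}$ and, being connected, fixes each of its finitely many irreducible components; the component $C$ containing $a$ necessarily contains $X$ by irreducibility of $X$ and $\mathcal K$-genericity of $a$ in $X$. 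One then pushes the Lemma \ref{L: stabilizer of extension lemma} argument through for $\operatorname{Stab}(Y)^0$, using that a $\mathcal K$-generic $g\in\operatorname{Stab}(Y)^0$ translates $a$ into $C$, and exploits the finiteness of $\operatorname{Stab}(X)$ to rule out the possibility that $C$ strictly contains $X$ above an infinite $\operatorname{Stab}(Y)^0$. This transfer step — connecting the set-wise stabilizer of $Y$ itself to that of $Y\cap X$ via the geometry of $\overline{Y}$ — is the principal obstacle I anticipate; part (2) by contrast is cleanly forced by stationarity plus Lemma \ref{L: stabilizer of extension lemma}.
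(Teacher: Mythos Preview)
Your argument for part (2) is correct and matches the paper's proof almost exactly. The invocation of strong rigidity is harmless but unnecessary: Lemma \ref{L: stabilizer of extension lemma} already allows $N$ to be $\mathcal G(A)$-definable over an arbitrary parameter set $A$, so there is no need to bring $N$ down to $\acl_{\mathcal G}(\emptyset)$. (The paper, incidentally, takes $g$ generic in $N^0$ rather than $N$, but either works.)

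For part (1), you have been misled by a typo in the statement. The hypothesis should read $Y\subset X$, not $Y\subset G$; this is what the paper's own proof assumes (it writes ``$g+a\in Y\subset X$''), and it is all that is needed in the single application of the lemma (to $Y=X\cap C_0$ in Proposition \ref{P: H=G}). With $Y\subset X$ the proof is a one-liner: take $g\in N:=\operatorname{Stab}(Y)$ $\mathcal K$-generic over $Aa$ for suitable $A$; then $g+a\in Y\subset X$ and Lemma \ref{L: stabilizer of extension lemma} finishes. Your ``natural attempt'' of passing to $Y\cap X$ already proves exactly this.

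The transfer step you worry about---recovering finiteness of $\operatorname{Stab}(Y)$ from finiteness of $\operatorname{Stab}(Y\cap X)$ for arbitrary $Y\subset G$---is not merely an obstacle but is impossible: take $Y=G$, which contains $a$ and has $\operatorname{Stab}(Y)=G$ infinite. So the statement as literally printed is false, and your proposed Zariski-closure analysis cannot succeed. Drop that part of the plan and simply read the hypothesis as $Y\subset X$.
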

    \begin{proof}
        \begin{enumerate}
            \item Let $N=\operatorname{Stab}(Y)$. Fix $A\subset G$ so that $N$ is $\mathcal G(A)$-definable. Let $g\in N$ be $\mathcal K$-generic over $Aa$. By definition of $N$, $g+a\in Y\subset X$, so that Lemma \ref{L: stabilizer of extension lemma} applies.
            \item Let $p=\tp_{\mathcal G}(a/B)$, and let $N=\operatorname{Stab}(p)$. So $N$ is $\mathcal G(B)$-definable (e.g., \cite[Theorem 7.1.10]{MaBook}). Let $g\in N^0$ (the connected component of $N$) be $\mathcal K$-generic over $Ba$. By Lemma \ref{L: independence transfer}, $g$ is also $\mathcal G$-generic in $N$ over $Ba$. It follows that $g$ and $Ba$ are $\mathcal G$-independent, so that $a\models p_g$, the non-forking extension of $p$ over $Bg$. So by definition of $N$, $a+g\models p$, and thus $a+g\in X$. Now apply Lemma \ref{L: stabilizer of extension lemma}. 
        \end{enumerate}
    \end{proof}

        \section{The Full Socle Theorem}

        In this section we prove an analog of Hrushovski's socle theorem (\cite[Proposition 4.3]{HrMordellLang}). One could also deduce our result from Hrushovski's via a short (but not completely trivial) argument. We include a sketch of a direct proof for readers unfamiliar with Hrushovski's version (and because we can give a slightly simpler proof in our setting).

        Recall that we are still working under Assumption \ref{A: setting}, and $H$ denotes the full socle of $G$, with full components $H_1,...,H_n$.

        \begin{proposition}\label{P: H=G} Let $X$ be a $\mathcal G(\emptyset)$-definable, closed, irreducible subvareity of $G$ with finite stabilizer. Then $X$ is contained in a single coset of $H$.
        \end{proposition}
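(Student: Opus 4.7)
The plan is to show that for $a$ $\mathcal K$-generic in $X$ over $\emptyset$, the image $\pi(a)$ under the quotient $\pi : G \to G/H$ lies in $\acl_{\mathcal G}(\emptyset)$. Once this is proved, irreducibility of $X$ and the constancy of $\pi$ on a $\mathcal K$-dense open subset of $X$ force $\pi$ to be constant on all of $X$, placing $X$ in a single coset of $H$.

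I argue by contradiction: assume $\pi(a) \notin \acl_{\mathcal G}(\emptyset)$. By the semi-minimal analysis of the stationary type $\tp_{\mathcal G}(\pi(a)/\acl_{\mathcal G}(\emptyset))$ in the finite-Morley-rank structure $\mathcal G$, there is a $\mathcal G$-minimal type $q$ (of an element of $G/H$, possibly over a countable parameter set $B$) with $\tp_{\mathcal G}(\pi(a)/\acl_{\mathcal G}(\emptyset)) \not\perp q$. I split into two cases according to whether $q$ is locally modular.

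In the non-locally-modular case, Fact \ref{F: type characterization of full} implies that the connected subgroup $L := G(q) \leq G/H$ coset-generated by $q$ (Definition \ref{D: G(p)}) is full. The short exact sequence $0 \to H \to \pi^{-1}(L) \to L \to 0$ splits up to isogeny (standard for semiabelian varieties), giving a connected subgroup $\tilde L \leq G$ isogenous to $L$. Since fullness is preserved under isogeny of $\mathcal G$-interpretable groups, $\tilde L$ is a connected full subgroup of $G$. But then $\tilde L \subseteq H$ by the definition of $H$ as the sum of all maximal connected full subgroups of positive dimension (Definition \ref{D: socle}); hence $\pi(\tilde L) = 0$, contradicting the fact that the composition $\tilde L \hookrightarrow \pi^{-1}(L) \twoheadrightarrow L$ is an isogeny onto a positive-dimensional group.

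In the locally modular case, Lemma \ref{L: stab vs nlm} gives that $L := G(q) = \operatorname{Stab}(q) \leq G/H$ is connected of $U$-rank $1$, hence of positive $\mathcal K$-dimension. The goal here is to use the non-orthogonality of $\tp_{\mathcal G}(\pi(a))$ with $q$ together with the $1$-basedness of $q$ (coming from local modularity) to produce, over suitable parameters, a positive-dimensional $\mathcal G$-definable connected subgroup $N \leq G$ and a $\mathcal K$-generic $g \in N$ over $a$ and those parameters with $g + a \in X$; Lemma \ref{L: stabilizer of extension lemma} then forces $N$ finite, a contradiction. The main obstacle is this extraction step, which converts a model-theoretic non-orthogonality relation with a locally modular minimal type into a concrete group-theoretic translation symmetry acting on (a conjugate of) $X$. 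This is the core of the socle analysis in Hrushovski's proof of function-field Mordell--Lang (\cite[Proposition 4.3]{HrMordellLang}); the expected simplifications in our setting come from strong rigidity (Lemma \ref{L: Rigid}) and the availability of the background field $\mathcal K$, via Lemma \ref{L: independence transfer}, which allows us to pass back and forth between $\mathcal K$- and $\mathcal G$-genericity inside $\acl_{\mathcal G}(\emptyset)$-definable subgroups.
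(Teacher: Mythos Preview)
Your outline has the right shape, but both cases contain a genuine gap, and both gaps come from the same source: you work with a minimal type $q$ in the quotient $G/H$, whereas the argument needs a minimal type in $G$ itself.

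In the non-locally-modular case, the claim that $0 \to H \to \pi^{-1}(L) \to L \to 0$ splits up to isogeny is \emph{false} for semiabelian varieties. Poincar\'e complete reducibility gives this for abelian varieties, but a non-torsion extension $0 \to \mathbb G_m \to G \to E \to 0$ of an elliptic curve by a torus has no one-dimensional abelian subvariety, hence no $\tilde L \leq G$ mapping isogenously onto $E$. So you cannot lift $L$ to a full subgroup of $G$ this way. In the locally modular case, you need a positive-dimensional $N \leq G$ and a $\mathcal K$-generic $g \in N$ with $g + a \in X$, but $\operatorname{Stab}(q)$ lives in $G/H$, not in $G$; you defer the passage to Hrushovski without supplying it. The paper itself notes that one \emph{can} deduce the result from Hrushovski's socle theorem ``via a short (but not completely trivial) argument,'' and that argument is precisely what is missing from your sketch.

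The paper avoids both problems with one device: the Algebraic Choice Property of $H$. Using the ACP together with stable embeddedness and Lemma~\ref{L: stab of extension is finite}, one finds $b_0 \in X \cap (a_0 + H)$ that is $\mathcal G$-interalgebraic with $c_0 = \pi(a_0)$ over~$\emptyset$. Choosing $B$ with $U(b_0/B) = 1$, the type $p = \tp_{\mathcal G}(b_0/B)$ is a minimal type \emph{of an element of $G$}. Now $G(p) \leq G$ meets infinitely many cosets of $H$, so $G(p) \not\subseteq H$, hence $G(p)$ is not full, hence $p$ is locally modular by Fact~\ref{F: type characterization of full}: your non-locally-modular case simply never occurs. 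For the remaining case, $\operatorname{Stab}(p)$ is already a subgroup of $G$, and since $p$ is locally modular while every minimal type almost internal to $H$ is not, $p \perp H$. Writing $h = a_0 - b_0 \in H$, orthogonality gives $\tp_{\mathcal G}(b_0/Bh) = \tp_{\mathcal G}(g + b_0/Bh)$ for $g$ generic in $\operatorname{Stab}(p)$; since $h + b_0 = a_0 \in X$, also $h + (g + b_0) = g + a_0 \in X$, and Lemma~\ref{L: stabilizer of extension lemma} finishes. The ACP step is exactly what replaces both your lifting attempts.
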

        \begin{proof} \textbf{Throughout the proof, fix $a_0$, a $\mathcal K$-generic element of $X$. Let $C_0$ be the coset $H+a_0$. When referring to $C_0$ as an element (of the interpretable group $G/H$) rather than a set, we write the lowercase $c_0$.}

        First suppose $U(c_0)=0$. Then $a_0$ is still generic in $X$ over $c_0$, which implies that a dense open subset of $X$ belongs to $C_0$. Since $C_0$ is closed, this means $X$ is contained in $C_0$, which gives the desired statement.

        Now assume $U(c_0)\geq 1$. We seek a contradiction. By definition of $U$-rank we can fix a $\mathcal G$-algebraically closed set $B\subset G$ so that $U(c_0/B)=1$. We first use the ACP and stable embededness to `identify' $c_0\in G/H$ with an element of $G$:

        \begin{claim} There is $b_0\in X\cap C_0$ $\mathcal G$-interalgebraic with $c_0$ over $\emptyset$.
        \end{claim}
        \begin{proof} Consider the $\mathcal G(c_0)$-definable family of translates $Y_b=(X\cap C_0)-b$ for $b\in X\cap C_0$. Each $Y_b$ is a subset of $H$. By stable embededness, there is a $\mathcal G(c_0)$-definable reparametrization of this family by $H$-tuples, say $\{Z_t:t\in T\}$ with $T\subset H^n$. By the ACP, there is $t_0\in T\cap\acl_{\mathcal G}(c_0)$. Then by definition, there is $b\in X\cap C_0$ with $(X\cap C_0)-b=Y_{t_0}$. By Lemma \ref{L: stab of extension is finite}, $X\cap C_0$ has finite stabilizer, so there are only finitely many such $b$, and thus each one is $\mathcal G$-algebraic over $c_0t_0$ (and thus over $c_0$). Let $b_0$ be any such $b$. So $b_0\in\acl_{\mathcal G}(c_0)$, and also $c_0\in\acl_{\mathcal G}(b_0)$ since $C_0$ is the coset containing $b_0$.
        \end{proof}

        Fix $b_0$ as in the claim, and let $p=\tp_{\mathcal G}(b_0/B)$. By the claim, $U(b_0/B)=U(c_0/B)=1$. So since $B$ is algebraically closed, $p$ is a minimal type. We will work with the subgroup $G(p)$ of Definition \ref{D: G(p)}.

        Since $U(c_0/B)=1$, the group $G(p)$ is not contained in $H$ (in fact, it intersects infinitely many cosets of $H$). Thus $G(p)$ cannot be full. By Fact \ref{F: type characterization of full}, $p$ must be locally modular. So the equivalent conditions (1)-(3) of Fact \ref{L: stab vs nlm} all hold. In particular, $\operatorname{Stab}(p)$ is infinite. We now give a more intricate version of the argument in Lemma \ref{L: stab of extension is finite} to show that $\operatorname{Stab}(p)$ is actually finite, which will give a contradiction.

        \begin{claim}
            $\operatorname{Stab}(p)$ is finite.
        \end{claim}
        \begin{proof}
            Let $N=\operatorname{Stab}(p)$, so $N$ is $\mathcal G(B)$-definable. Let $g\in N$ be $\mathcal K$-generic over $Ba_0$. Note that $b_0\in\acl_{\mathcal G}(c_0)\subset\acl_{\mathcal G}(a_0)$, so that $g$ is also $\mathcal K$-generic over $Ba_0b_0$. By Lemma \ref{L: independence transfer}, $g$ is also $\mathcal G$-generic in $N$ over $Ba_0b_0$. In particular, $g$ and $b_0$ are $\mathcal G$-independent over $B$. Then by symmetry of forking, we get that $b_0\models p_g$, the non-forking extension of $p$ over $Bg$.

            Let $b_1=g+b_0$, and let $a_1=g+a_0$. Note by definition of $g$ that $b_1\models p$. 
            
            Now let $h=a_0-b_0=a_1-b_1\in H$. By local modularity and Lemma \ref{L: H_i orthogonal}(1), $p$ must be orthogonal to $H$. So $\tp_{\mathcal G}(b_0/Bh)=\tp_{\mathcal G}(b_1/Bh)$. In particular, since $a_0=h+b_0\in X$, it follows that $a_1=h+b_1\in X$. That is, $a_0+g\in X$. The claim now follows by Lemma \ref{L: stabilizer of extension lemma}.
            \end{proof}

            Finally, we have shown that $\operatorname{Stab}(p)$ is both finite and infinite, a contradiction. Thus we rule out the case that $U(c_0)\geq 1$, which proves Proposition \ref{P: H=G}.
            \end{proof}

            We conclude that if $X\subset G$ satisfies the hypotheses of Theorem \ref{T: general}, then $G$ is generated by its full components; and moreover, the structure $\mathcal G$ is determined entirely by the full components:

            \begin{corollary}\label{C: socle is whole thing} Assume there is a $\mathcal G(\emptyset)$-definable, closed, irreducible subvariety $X\subset G$ which has finite stabilizer and generates $G$. Then:
            \begin{enumerate}
                \item $H=G$, i.e. $G$ is generated by its full components.
                \item $\mathcal G$ is precisely the structure induced via the sum map by the disjoint union of the $H_i$ (each with its full structure). In other words, for each $m$, the definable subsets of $G^m$ are precisely the finite unions of sets $Z_1+...+Z_n$ with each $Z_i$ a constructible subset of $H_i^n$.      
            \end{enumerate}
            \end{corollary}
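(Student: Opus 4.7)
The plan is to derive both parts from Proposition \ref{P: H=G}, together with the structural facts already assembled in Lemma \ref{L: H_i orthogonal} and the standing hypothesis that $X$ generates $G$.

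For part (1), I would start by applying Proposition \ref{P: H=G} to $X$: this gives that $X\subset x_0+H$ for any chosen $x_0\in X$, so $X-x_0\subset H$. Because $X$ generates $G$ as an abstract group, every element of $G$ can be written as a finite $\mathbb{Z}$-linear combination of elements of $X$. Collecting coefficients, such a combination has the form $mx_0+h$ with $m\in\mathbb{Z}$ and $h\in H$, so $G/H$ is cyclic, generated by the image of $x_0$. But $G/H$ is itself a connected semiabelian variety over the uncountable algebraically closed field $K$; if it were nontrivial it would have uncountably many $K$-points, contradicting cyclicity. Therefore $G/H=0$, i.e.\ $H=G$.

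For part (2), the idea is to transport the question to the product $H_1\times\cdots\times H_n$ via the sum isogeny $\sigma\colon H_1\times\cdots\times H_n\to G$ from Lemma \ref{L: H_i orthogonal}(3). Since $\sigma^m$ is a surjective finite-to-one $\mathcal G$-definable map, a subset $Y\subset G^m$ is $\mathcal G$-definable if and only if $(\sigma^m)^{-1}(Y)\subset H_1^m\times\cdots\times H_n^m$ is. So it suffices to describe the $\mathcal G$-definable subsets of the product. Here the key inputs are pairwise orthogonality of the $H_i$ (Lemma \ref{L: H_i orthogonal}(2)) together with the fact that each $H_i$ is fully stably embedded, which is built into the definition of a full component. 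The standard stable-theoretic rectangle decomposition for pairwise orthogonal fully stably embedded sorts then yields that every $\mathcal G$-definable subset of $H_1^m\times\cdots\times H_n^m$ is a finite boolean combination — equivalently a finite union — of rectangles $A_1\times\cdots\times A_n$ with $A_i\subset H_i^m$ $\mathcal G$-definable. By fullness of each $H_i$ the $A_i$ are constructible, and pushing a rectangle through $\sigma^m$ yields a set of the form $Z_1+\cdots+Z_n$ with $Z_i\subset H_i^m$ constructible, as claimed.

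The main obstacle I anticipate is the rectangle-decomposition step: although it is a well-known consequence of orthogonality plus stable embeddedness, one must be careful to apply it to arbitrary $m$-tuples (not just single elements) and to relate orthogonality of the groups $H_i$ as full sorts to orthogonality of all types realized in them. Once this is done, the remainder is bookkeeping: constructibility passes through the finite kernel of $\sigma$, and the description in (2) follows immediately.
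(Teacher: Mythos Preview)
Your proposal is correct and follows essentially the same route as the paper: part (1) is deduced from Proposition \ref{P: H=G} together with the hypothesis that $X$ generates $G$ (you supply the details where the paper simply says ``clear''), and part (2) is obtained by transporting along the sum isogeny and invoking the standard rectangle decomposition for pairwise orthogonal full sorts. The obstacle you flag is exactly the point the paper handles by citing the well-known equivalence between orthogonality and the product decomposition of definable sets.
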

            \begin{proof} (1) is clear by Proposition \ref{P: H=G}. We show (2). Since each $H_i$ is full, it is clear that any set of the desired form is $\mathcal G$-definable. We show the converse. Now by (1), we can write $G$ as the image of $H_1\times...\times H_n$ under the sum map. Then the definable subsets of $G^m$ are precisely the images of definable subsets of $(H_1)^m\times...\times(H_n)^m$ under addition. By orthogonality, every definable subset of $(H_1)^m\times...\times(H_n)^m$ is a finite union of products of definable subsets of the $(H_i)^m$ (for stationary sets this is an easy and well-known equivalent formulation of orthogonality, see, e,g, \cite[Definition 4.2]{HrMordellLang}). This implies the desired statement.
            \end{proof}

            \section{The Proof of Theorem \ref{T: main}}
            We have finished the main part of the proof of Theorem \ref{T: main}. We now move toward finishing the proof. Our goal now is to show that any $\mathcal G$-definable subvariety as in Theorem \ref{T: general} decomposes into a sum of subvarieties of the $H_i$. This is a consequence of orthogonality.

            We first note a general fact about isogenies of algebraic groups. We write this separately because we will use it later on as well.

            \begin{lemma}\label{L: isogeny closed} Let $S$ and $T$ be connected, commutative algebraic groups, and $f:S\rightarrow T$ an isogeny. If $V\subset S$ is closed and irreducible, then so is $f(V)$.
            \end{lemma}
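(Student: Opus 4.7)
The plan is to reduce the lemma to the fact that $f$ is a \emph{finite morphism} of algebraic varieties; both the closedness and the irreducibility of $f(V)$ will then follow from standard facts. Recall that an isogeny of commutative algebraic groups is, by definition, a surjective homomorphism $f \colon S \to T$ with finite kernel $K \leq S$. By the standard theory of quotients of algebraic groups, $f$ factors through the quotient morphism $S \to S/K$, and the induced map $S/K \to T$ is an isomorphism. Because $K$ is a finite subgroup, the quotient $S \to S/K$ is a finite morphism, and hence so is $f$.

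From here two classical facts finish the argument. First, a finite morphism is proper, and in particular closed; so $f(V)$ is closed in $T$. Second, the restriction $f|_V \colon V \to f(V)$ is a continuous surjection, and the continuous image of an irreducible space is irreducible; so $f(V)$ is irreducible. Together these give the lemma.

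There is no real obstacle: the only thing to unpack is the chosen definition of `isogeny' and the (classical) fact that finite morphisms are closed. Since every isogeny appearing in the rest of the paper is between semiabelian varieties, both are entirely standard. If one preferred to avoid invoking properness directly, an alternative route is to note that $f(V)$ is constructible by Chevalley, its closure $W$ is irreducible as the closure of the continuous image of the irreducible set $V$, and $\dim W = \dim f(V) = \dim V$ since $f$ has finite fibers; one then uses the homogeneity of the group action (together with closedness of $V$) to upgrade $f(V) \subseteq W$ to equality.
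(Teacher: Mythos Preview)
Your proof is correct and follows essentially the same approach as the paper: both factor $f$ through the quotient $S \to S/\ker(f)$ and argue that each factor is a closed map, with irreducibility then immediate. The only cosmetic difference is that the paper describes the second factor $S/\ker(f) \to T$ as a \emph{universal homeomorphism} rather than an isomorphism (which is the safer phrasing in positive characteristic if one reads $\ker(f)$ as the reduced kernel), whereas you package the whole thing as ``isogenies are finite, hence proper, hence closed''; either way the content is the same.
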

            \begin{proof} This is because isogenies are closed maps (the irreducibility clause is then clear). To show $f$ is closed, factor $f$ as $S\mapsto S/\operatorname{ker(f)}\rightarrow T$. The first map is just the quotient, and the second is a universal homeomorphism. So both factor maps are closed, thus so is $f$.
            \end{proof}

            Returning to our structure $\mathcal G$, we conclude:
            
            \begin{proposition}\label{P: Z_i} Let $X\subset H$ be a $\mathcal G(\emptyset)$-definable, closed, irreducible subvariety with finite stabilizer and which is not contained in any coset of a proper algebraic subgroup of $G$. Then there are closed irreducible subvarieties $X_i\subset H_i$ for each $i$ such that $X=X_1+...+X_n$. 
            \end{proposition}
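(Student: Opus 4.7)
\textbf{Proof plan for Proposition \ref{P: Z_i}.}

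The plan is to pull $X$ back along the isogeny $\pi\colon H_1\times\dots\times H_n\to H$ provided by Lemma \ref{L: H_i orthogonal}(3), and then use orthogonality of the $H_i$ to force the preimage to be a union of rectangles. Note first that since $X\subseteq H$ is not contained in a coset of a proper algebraic subgroup of $G$, we must have $H=G$, so Corollary \ref{C: socle is whole thing} applies; in particular the induced $\mathcal G$-structure on $H_1\times\dots\times H_n$ is just the disjoint union of the $H_i$ each with its full algebraic structure from $K$.

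Let $\widetilde X=\pi^{-1}(X)\subset H_1\times\dots\times H_n$; this is $\mathcal G$-definable and (since $\pi$ is a morphism and $X$ is closed) Zariski closed. The first key step is to apply orthogonality of the pairwise $H_i$ (together with their fullness): every $\mathcal G$-definable subset of $H_1\times\dots\times H_n$ is a finite union of rectangles $Y_1\times\dots\times Y_n$ with each $Y_i\subset H_i$ constructible in $K$. Since $\widetilde X$ is Zariski closed and closure commutes with finite union and with products, we may take each factor $Y_i$ to be Zariski closed in $H_i$. Decomposing each such closed factor into irreducible components and distributing through the product, and then using that over an algebraically closed field a product of closed irreducibles is closed irreducible, we obtain
\[
\widetilde X \;=\; \bigcup_{j=1}^{N}\bigl(Z_j^1\times\dots\times Z_j^n\bigr),
\]
where each $Z_j^i$ is a closed irreducible subvariety of $H_i$. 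In particular, the irreducible components of $\widetilde X$ are among these rectangles.

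The second step is to push this decomposition forward. By Lemma \ref{L: isogeny closed}, $\pi$ sends closed irreducibles to closed irreducibles, so each $\pi(Z_j^1\times\dots\times Z_j^n)=Z_j^1+\dots+Z_j^n$ is a closed irreducible subvariety of $G$ contained in $X$. Their union equals $\pi(\widetilde X)=X$, and since $X$ is irreducible there must exist some index $j$ with $\pi(Z_j^1\times\dots\times Z_j^n)=X$. Setting $X_i:=Z_j^i$ yields the required decomposition $X=X_1+\dots+X_n$ with $X_i\subset H_i$ closed and irreducible.

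The principal obstacle I anticipate is the first step, namely upgrading the orthogonality-plus-fullness statement into a rectangular decomposition into Zariski closed factors. The orthogonality input is essentially the definition (as recorded in \cite[Definition 4.2]{HrMordellLang} and re-used in Corollary \ref{C: socle is whole thing}), but one needs to be a little careful that $\widetilde X$ already being Zariski closed really does let us replace each constructible factor by its Zariski closure without enlarging $\widetilde X$. Once that rectangular-decomposition step is in hand, everything else is a routine application of irreducibility of $X$ together with Lemma \ref{L: isogeny closed}.
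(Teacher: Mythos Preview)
Your proposal is correct and follows essentially the same approach as the paper. The only cosmetic difference is that the paper applies Corollary \ref{C: socle is whole thing}(2) directly to $X\subset G$ (obtaining $X$ as a finite union of sums $Z_1^j+\dots+Z_n^j$ with each $Z_i^j$ constructible in $H_i$), whereas you unroll that corollary by explicitly pulling back along $\pi$, decomposing into rectangles, and pushing forward via Lemma \ref{L: isogeny closed}; the subsequent ``close, refine to irreducibles, pick one by irreducibility of $X$'' is identical in both.
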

            \begin{proof} By Corollary \ref{C: socle is whole thing}, $X$ is a finite union of sets of the form $X_j=Z_1^j+...+Z_n^j$, with each $Z_i^j$ a constructible subset of $H_i$. Since $X$ is closed, we may assume all $Z_i^j$ are closed. Refining the decomposition if necessary, we may then assume all $Z_i^j$ are irreducible. Finally, since $X$ is irreducible, there is $j$ such that $X=Z_1^j+...+Z_n^j$. Now take $X_i=Z_i^j$.

            \end{proof}
            
            We are now ready to prove Theorem \ref{T: main}:

            \begin{proof}[Proof of Theorem \ref{T: main}] Let $K$, $G$, and $X$ be as in Theorem \ref{T: main}. Let $\mathcal G=(G,+,X)$, a group with a distinguished subset. By Lemma \ref{L: full implies determines}, it is enough to show that $\mathcal G$ is a full $K$-relic. As described in the introduction, we may assume by Fact \ref{F: full characterization} that $K$ is uncountable (see the discussion after Question \ref{Q: relic version of main question}).
            
            Let $H_1,...,H_n$ be the full components of $\mathcal G$, and let $H=H_1+...+H_n$ be the full socle. Note that $X$ satisfies the assumptions of Corollary \ref{C: socle is whole thing}, so we have $H=G$. Thus $X\subset H$, so by Proposition \ref{P: Z_i}, there are closed irreducible subvarieties $X_i\subset H_i$ such that $X=X_1+...+X_n$. By the hypotheses of Theorem \ref{T: main}, such a decomposition is impossible for $n>1$. Thus $n=1$, and so $G=H=H_1$ is full.
            \end{proof}

            \section{Factorizations of Subvarieties}

            \subsection{Semiabelian Pairs}

            We now drop Assumption \ref{A: setting} -- though we continue throughout to work over a fixed algebraically closed field $K$. We will be moving toward Theorem \ref{T: general}. In view of what we have already done, our main remaining goals are:
            \begin{enumerate}
                \item Prove that the subvarieties $X_i$ from Proposition \ref{P: Z_i} geometrically determine $H_i$.
                \item Prove the uniqueness of the decomposition claimed in Theorem \ref{T: general}.\end{enumerate}
                
                In fact, we will deduce (1) and (2) by proving more general unique factorization results for subvarieties of semiabelian varieties. These factorization results will only work up to translation. Thus, to streamline the presentation, we define: 

            \begin{definition}
                A \textit{semiabelian pair} is a pair $(G,[X])$, where $G$ is a semiabelian variety over $K$, and $[X]$ is an equivalence class of closed irreducible subvarieties of $G$ modulo translation. If $G$ is an abelian variety, we also call $(G,[X])$ an \textit{abelian pair}.
            \end{definition}

            \begin{example}
                A canonical example of a semiabelian pair is the example $(J,[C])$ considered by Zilber, where $C$ is a curve and $J$ is its Jacobian. Here we use standard abuse of notation to identify $C$ with its image in $J$. Typically, one can only do this after fixing a basepoint in $C$. In our case, this is not needed, because the equivalence class $[C]$ does not depend on the base point.
            \end{example}

            Clearly, if $(G,[X])$ and $(H,[Y])$ are semiabelian pairs, then $(G\times H,[X\times Y])$ is a well-defined semiabelian pair. Similarly, if $(G,[X])$ is a semiabelian pair, and $f:G\rightarrow H$ is an isogeny, then $(H,[f(X)])$ is a well-defined semiabelian pair (this uses Lemma \ref{L: isogeny closed} to get that $f(X)$ is closed).
            
            Lemma \ref{L: isogeny} says that semiabelian pairs are also closed under isogeny preimages:

            \begin{lemma}\label{L: isogeny}
                Let $f:G\rightarrow H$ be an isogeny of semiabelian varieties over $K$, and let $(H,[Y])$ be a semiabelian pair. Then there is exactly one semiabelian pair $(G,[X])$ mapping to $(H,[Y])$ under $f$.
            \end{lemma}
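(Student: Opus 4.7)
My plan is to realize the desired $X$ as an irreducible component of the preimage $f^{-1}(Y)$, and then to use the translation action of the finite group $\ker(f)(K)$ on this preimage to handle uniqueness up to translation.

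For existence, I will set $F := f^{-1}(Y)$. Since $f$ is surjective, $F$ is non-empty, and since $f$ is finite (hence a closed map), each irreducible component $X_i$ of $F$ has closed image $f(X_i) \subseteq Y$. Because the finite union of the $f(X_i)$ equals $Y$ and $Y$ is irreducible, at least one $f(X_i)$ must equal $Y$. Taking $X := X_i$ then yields a closed irreducible subvariety of $G$ with $f(X) = Y$, so $(G,[X])$ is a semiabelian pair mapping to $(H,[Y])$.

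For uniqueness, suppose $X_1, X_2 \subset G$ are closed irreducible subvarieties with $f(X_j) \in [Y]$. Using surjectivity of $f$ on $K$-points, I can translate each $X_j$ independently inside $G$ to arrange $f(X_1) = f(X_2) = Y$. A dimension count (using finiteness of $f$, so $\dim X_j = \dim Y = \dim F$) shows that $X_1$ and $X_2$ are each irreducible components of $F$. The task is then reduced to showing that the group $N := \ker(f)(K)$, which acts on $F$ by translation in $G$, acts transitively on the set of irreducible components of $F$.

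To show this transitivity, I will first handle the case that $f$ is separable: above a dense open $U \subseteq Y$ the map $f$ is \'etale, so each fiber $f^{-1}(y)$ with $y \in U$ is a single free $N$-orbit. Picking a generic $x \in X_1$ with $f(x) \in U$, one has $f^{-1}(f(x)) = x + N$, so $\bigcup_{k \in N}(X_1 + k)$ is a closed set containing $f^{-1}(U)$, hence equals $F$; this forces any other component to have the form $X_1 + k$. In the general case I would factor $f = f_{\mathrm{sep}} \circ \pi$ with $\pi$ a purely inseparable isogeny; since $\pi$ is a universal homeomorphism bijective on $K$-points (as $K$ is perfect) and $N$ surjects onto $\ker(f_{\mathrm{sep}})(K)$, pullback along $\pi$ gives an $N$-equivariant bijection between the components of $f_{\mathrm{sep}}^{-1}(Y)$ and those of $F$, reducing the claim to the separable case. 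The only mildly subtle point is this inseparable reduction; everything else is a direct application of standard facts about isogenies.
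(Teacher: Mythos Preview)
Your proof is correct, and the existence half matches the paper's argument essentially verbatim. For uniqueness, however, you work considerably harder than necessary. The paper observes directly that once $f(X_1)=f(X_2)=Y$, one has the containment (of $K$-point sets)
\[
X_2 \;\subset\; f^{-1}(Y) \;=\; f^{-1}(f(X_1)) \;=\; X_1 + \ker(f)(K),
\]
simply because $f:G(K)\to H(K)$ is a surjective group homomorphism and $f(X_1)=Y$; no \'etaleness, generic points, or case split is needed. Since $\ker(f)(K)$ is finite and $X_2$ is irreducible, $X_2\subset X_1+g$ for some $g\in\ker(f)(K)$, and equality follows from the dimension comparison.

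The point is that your separable case argument already contains this observation (the line ``$\bigcup_{k\in N}(X_1+k)$ \ldots\ equals $F$''), but you reach it via \'etaleness over a dense open $U$, which is irrelevant: the identity $f^{-1}(Y)(K)=X_1(K)+N$ is purely group-theoretic on $K$-points and holds regardless of separability. Consequently the entire inseparable reduction through $f=f_{\mathrm{sep}}\circ\pi$ can be deleted. What your longer route buys is perhaps a clearer picture of the component structure of $f^{-1}(Y)$ as an $N$-torsor over $Y$ in the separable case, but for the lemma at hand this is more machinery than the statement warrants.
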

            \begin{proof}
                For existence, let $X$ be any top-dimensional irreducible component of $f^{-1}(Y)$. Then by closedness and irreducibility (applying Lemma \ref{L: isogeny closed}), $f(X)=Y$.

                For uniqueness, suppose $X'$ is another closed irreducible subvariety of $G$ with $f(X')=Y$. Then $f(X)=f(X')$, which implies that every element of $X'$ differs from $X$ by an element of the kernel $\operatorname{ker}(f)$. That is, $X'\subset X+\operatorname{ker}(f)$. Since $\operatorname{ker}(f)$ is finite, and $X'$ is irreducible, there is $g\in\operatorname{ker}(f)$ with $X'\subset X+g$. By closedness and irreducibility (and since $\dim(X)=\dim(X')$), this means $X'=X+g$, thus $[X']=[X]$.
            \end{proof}

            Moving forward, if $f$ is an isogeny sending the semiabelian pair $(G,[X])$ to the semiabelian pair $(H,[Y])$, we may also say that \textit{$f$ is an isogeny from $(G,[X])$ to $(H,[Y])$.}

            \subsection{Factorizations}
            We can now define factorizations of semiabelian pairs:

            \begin{definition}\label{D: decomposition}
                Let $(G,[X])$ be a semiabelian pair. 
                \begin{enumerate}
                    \item A \textit{factorization} of $(G,[X])$ is a finite sequence of semiabelian pairs $(G_i,[X_i])$, where each $G_i\leq G$, so that the sum map $s:G_1\times...\times G_n\rightarrow G$ is an isogeny from $(G_1\times...\times G_n,[X_1\times...\times X_n])$ to $(G,[X])$.
                    \item $(G,[X])$ is \textit{simple} if $\dim(G)\geq 1$ and $(G,[X])$ has no nontrivial factorizations.
                \end{enumerate}
            \end{definition}

            \begin{example}
                Suppose $(J,[C])$ is the abelian pair given by a curve $C$ and its Jacobian $J$. Then $(J,[C])$ is always simple. Indeed, if we factor $(J,[C])$ into $(J_1,[C_1])$ and $(J_2,[C_2])$, then since $C$ is irreducible of dimension 1, some $C_i$ is a single point. But then since $C$ generates $J$, it follows that $J_i$ is also a single point, so the factorization was trivial.
            \end{example}

            By Lemma \ref{L: isogeny}, a factorization $(G_1,[X_i]),...,(G_n,[X_n])$ of $(G,[X])$ is determined by the subgroups $G_i$. Indeed, this follows since the product $(G_1\times...\times G_n,[X_1\times...\times X_n])$ must be the unique preimage of $(G,[X])$ under the sum isogeny. Thus, we will often identify a factorization with its sequence of subgroups $\{G_i\}$, and say that the sequence $\{G_i\}$ \textit{determines a factorization} of $(G,[X])$.

            Note that if $\{(G_i,[X_i])\}$ is a factorization of $(G,[X])$, and for each $i$, $\{(G_i^j,[X_i^j])\}$ is a factorization of $(G_i,[X_i])$, then $\{(G_i^j,[X_i^j])\}$ (over all $i$ and $j$) is a factorization of $(G,[X])$. We call this a \textit{refinement} of the original factorization $\{(G_i,[X_i])\}$. The following is then clear:

            \begin{lemma}\label{L: existence of factorization}
            Every semiabelian pair admits a factorization into simple semiabelian pairs.
            \end{lemma}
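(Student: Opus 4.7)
The plan is to proceed by induction on $\dim(G)$, exploiting the refinement-closure of factorizations noted immediately before the lemma statement.

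For the base case, suppose $\dim(G)=0$, so that $G$ is finite. Then the empty sequence of semiabelian pairs constitutes a factorization of $(G,[X])$, since the sum map from the trivial group to $G$ is an isogeny and $X$ must be a point. (Alternatively, if one wishes to avoid the empty factorization, this case can be excluded up front, as the definition of simple already requires $\dim(G)\geq 1$; in that case the base case is $\dim(G)=1$, where $(G,[X])$ is automatically simple because any nontrivial factorization would have to split $\dim(G)$ as a sum of at least two positive integers.)

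For the inductive step, assume the result holds for all semiabelian pairs whose ambient group has dimension smaller than $\dim(G)$. If $(G,[X])$ is already simple, the one-term sequence $(G,[X])$ is the desired factorization. Otherwise, by definition of simplicity, $(G,[X])$ admits a nontrivial factorization, say $\{(G_i,[X_i])\}_{i=1}^n$. The key observation is that since the sum map $G_1\times\cdots\times G_n\to G$ is an isogeny, we have $\dim(G)=\sum_i\dim(G_i)$; thus if some $G_i$ equalled $G$, all the other factors would have dimension zero and the factorization would be trivial, contrary to assumption. Hence $\dim(G_i)<\dim(G)$ for each $i$, and the inductive hypothesis applies to each $(G_i,[X_i])$ to yield a factorization $\{(G_i^j,[X_i^j])\}_j$ of $(G_i,[X_i])$ into simple semiabelian pairs.

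Finally, one assembles these refinements: the combined sequence $\{(G_i^j,[X_i^j])\}_{i,j}$ is a factorization of $(G,[X])$ by the refinement remark preceding the lemma, and each of its terms is simple by construction. Since this step relies only on routine verification of the refinement property (already granted), I expect no real obstacle; the entire proof is just dimension induction applied to the definition.
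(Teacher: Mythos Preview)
Your proof is correct and essentially the same as the paper's: both rest on the dimension bound for the number of factors together with the refinement-closure of factorizations. The only cosmetic difference is that the paper phrases it as ``take a factorization with the maximal number of factors; each factor must then be simple,'' while you phrase it as an explicit induction on $\dim(G)$.
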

            \begin{proof} Let $(G,[X])$ be a semiabelian pair. The number of factors in a factorization of $(G,[X])$ is bounded by the dimension of $G$. So, take a factorization $\{(G_i,[X_i])\}$ with the largest number of factors. If some $(G_i,[X_i])$ were non-simple, then factoring it would lead to a refinement of the original factorization $\{(G_i,[X_i])\}$ with more factors, a contradiction.
            \end{proof}

            \subsection{A Galois Correspondence}

            Let us reformulate our work to this point in the language we have developed. Suppose $G$ is a semiabelian variety over $K$, let $X$ be a closed irreducible subvariety which has finite stabilizer and generates $G$, and let $\mathcal G$ be any $K$-relic expanding $(G,+,X)$. Then:
            
            \begin{enumerate}
                \item Theorem \ref{T: main} says that if $(G,[X])$ is simple, then $\mathcal G$ is full.
                \item Proposition \ref{P: Z_i} says that the full components of $\mathcal G$ determine a factorization of $(G,[X])$.
            \end{enumerate}

            We now give a more general picture capturing the above two statements. Namely, suppose $(G,X)$ is as in Theorem \ref{T: general}. We give a Galois-style correspondence between factorizations of $(G,[X])$ and $K$-relics expanding $(G,+,X)$ (note that if $[X]=[Y]$, then any $K$-relic expanding $(G,+,X)$ also defines $Y$ and vice versa, so the collection of relics under consideration only depends on $(G,[X])$).
            
            Precisely, say that two such $K$-relics $\mathcal M$ and $\mathcal N$ are \textit{interdefinable} if they have the same definable sets (i.e. every $\mathcal M$-definable subset of every $M^n$ is also $\mathcal N$-definable and vice versa; after naming a small set of parameters, this is equivalent to $\mathcal M$ and $\mathcal N$ having the same automorphism group). The $K$-relics expanding $(G,+,X)$, viewed up to interdefinability, form a partial order: $\mathcal M\sqsubseteq\mathcal N$ if every $\mathcal M$-definable set is also $\mathcal N$-definable. Similarly, the factorizations of $(G,[X])$ form a partial order under refinement. In this language, we show:

            \begin{theorem}\label{T: Galois Correspondence} Let $(G,[X])$ be a semiabelian pair over $K$, where $X$ has finite stabilizer and generates $G$. Then there is an order-reversing bijective correspondence between factorizations of $(G,[X])$ and $K$-relics expanding $(G,+,X)$ up to interdefinability.
            \end{theorem}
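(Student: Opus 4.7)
My plan is to construct both directions of the bijection explicitly and verify that they are mutually inverse and order-reversing. Given a factorization $\mathcal F = \{(G_i,[X_i])\}_{i=1}^n$ of $(G,[X])$, define $\mathcal G_{\mathcal F}$ to be the $K$-relic on $G$ obtained by expanding $(G,+,X)$ so that each $G_i$ carries its full $K$-induced structure; equivalently, the definable sets of $\mathcal G_{\mathcal F}$ are the finite boolean combinations of sums $Z_1+\cdots+Z_n$ with each $Z_i$ a $K$-constructible subset of $G_i^m$ (in particular $X$ itself is definable as the representative $X_1+\cdots+X_n$ of its translation class). Conversely, given any $K$-relic $\mathcal G$ expanding $(G,+,X)$, the hypotheses of Corollary~\ref{C: socle is whole thing} are automatically satisfied, so its full components $H_1,\ldots,H_n$ satisfy $G = H_1+\cdots+H_n$ via an isogeny, and by Proposition~\ref{P: Z_i} there are $X_j\subseteq H_j$ with $X = X_1+\cdots+X_n$, yielding a factorization $\{(H_j,[X_j])\}$ of $(G,[X])$.

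One composition is essentially immediate: starting with $\mathcal G$ and forming $\mathcal G_{\{H_j\}}$ from its full components, Corollary~\ref{C: socle is whole thing}(2) tells us that the definable sets of $\mathcal G$ are precisely the finite boolean combinations of sums from the $H_j$ — which is the defining description of $\mathcal G_{\{H_j\}}$. So $\mathcal G$ and $\mathcal G_{\{H_j\}}$ are interdefinable. The order-reversing property is also clean: if $\mathcal F'$ refines $\mathcal F$, then each factor of $\mathcal F$ splits further in $\mathcal F'$, imposing more orthogonality constraints and thereby reducing the collection of definable sets; thus $\mathcal G_{\mathcal F'}\sqsubseteq \mathcal G_{\mathcal F}$.

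The technical heart of the proof is the other composition: starting with a factorization $\mathcal F = \{G_i\}$, one must show that the full components of $\mathcal G_{\mathcal F}$ are exactly the $G_i$ of positive dimension. Each $G_i$ is a connected full subgroup of $\mathcal G_{\mathcal F}$ by construction, so it is contained in some maximal connected full subgroup $H$. Using the explicit ``sums'' description of definable sets, any definable subgroup $H\leq G$ in $\mathcal G_{\mathcal F}$ has the form $Z_1+\cdots+Z_n$ with $Z_i$ an algebraic subgroup of $G_i$. If two or more of the $Z_i$ are positive-dimensional, then a generic $K$-constructible curve $C\subseteq Z_{i_1}+Z_{i_2}\subseteq H$ (for instance the graph of a non-constant morphism $Z_{i_1}\to Z_{i_2}$) does not decompose as a boolean combination of factor-separated sums, and so is not $\mathcal G_{\mathcal F}$-definable — contradicting the fullness of $H$. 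Hence $H$ lies in a single $G_i$, and by maximality $H = G_i$. The corresponding $X_i$ are then recovered up to translation by applying Proposition~\ref{P: Z_i} to $\mathcal G_{\mathcal F}$. I expect the main obstacle to be precisely this orthogonality step: verifying the ``sums only'' description of definable sets in $\mathcal G_{\mathcal F}$ rigorously, and converting the model-theoretic orthogonality encoded by Lemma~\ref{L: H_i orthogonal}(2) into the concrete algebraic-geometric statement that a generic subvariety of a product cannot itself be a product.
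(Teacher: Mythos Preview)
Your proposal is correct and follows the same overall architecture as the paper: both directions of the correspondence are defined identically, the composition ``relic $\to$ factorization $\to$ relic'' is handled via Corollary~\ref{C: socle is whole thing}(2) exactly as you do, and the order-reversing check is the same. The only substantive difference is in the other composition. Where you argue that a full subgroup $H$ cannot straddle two distinct $G_i$'s by exhibiting an explicit $K$-constructible curve in $H$ that fails the ``sums only'' description, the paper argues more abstractly: the $G_i$ are pairwise orthogonal in $\mathcal G_{\mathcal F}$ (this is equivalent to your ``sums only'' description, and is the point you rightly flag as the crux), while any two infinite definable subgroups of a \emph{full} subgroup are non-orthogonal (since a full subgroup lives in a single copy of ACF). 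Hence each full component $H_j$ contains at most one $G_i$, and the inclusion $G_1\times\cdots\times G_n\hookrightarrow H_1\times\cdots\times H_m$ between two groups of dimension $\dim(G)$ forces $m=n$ and $G_i=H_i$.

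The paper's route is cleaner and avoids two minor wrinkles in yours: first, your parenthetical example of a non-constant morphism $Z_{i_1}\to Z_{i_2}$ need not exist (e.g.\ if the factors share no simple isogeny class), so you would need a different source of curves; second, you never actually need the intermediate claim that every $\mathcal G_{\mathcal F}$-definable subgroup decomposes as $Z_1+\cdots+Z_n$. Your diagnosis that everything reduces to orthogonality of the $G_i$ in $\mathcal G_{\mathcal F}$ is exactly right---the paper just cashes it out via a dimension count rather than a concrete indecomposable subvariety.
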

            \begin{proof} To a $K$-relic $\mathcal G$ expanding $(G,+,X)$, we associate the factorization determined by the full components of $\mathcal G$ (i.e. the one given by Proposition \ref{P: Z_i}). Conversely, given a factorization $\{(G_i,[X_i])\}$, we associate the $K$-relic on $\mathcal G$ endowing each $G_i$ separately with its full structure. That is, $\mathcal G$ is a structure as in Corollary \ref{C: socle is whole thing} -- the definable subsets of $G^m$ are precisely the finite unions of sums of constructible subsets of the $(G_i)^m$. Note that $X$ is indeed $\mathcal G$-definable, because it has this form (as the sum of the $X_i\subset G_i$). 

            Let us first show that these two associations are inverses. Given a $K$-relic $\mathcal G$ expanding $(G,+,X)$, let $G_1,...,G_n$ be its full components, and let $\mathcal G'$ be the structure associated to the factorization determined by $\{G_i\}$. Then by Corollary \ref{C: socle is whole thing}(2), we have $\mathcal G=\mathcal G'$ (up to interdefinability).

            Now consider a factorization $\{(G_i,[X_i])\}_{i=1}^n$. Let $\mathcal G$ be the associated $K$-relic, and let $H_1,...,H_m$ be its full components. By the choice of $\mathcal G$, the groups $G_i$ are full. It follows that each $G_i$ is a subgroup of some $H_j$. Also by the choice of $\mathcal G$, the groups $G_i$ are orthogonal. Thus each $H_j$ contains at most one $G_i$ (as $H_j$ is full, any two of its definable subgroups are non-orthogonal). Rearranging if necessary, we may thus assume $n\leq m$ and each $G_i\leq H_i$. We thus have the inclusion map $f:G_1\times...\times G_n\rightarrow H_1\times...\times H_m$. Since both sides are factorizations, the domain and target of $f$ have the same dimension, i.e. $\dim(G)$. So, since $f$ is injective (and the target is connected), it is surjective. This implies that $m=n$ and each $G_i=H_i$, which is what we needed to show.

            Now it remains to show that the correspondence is order-reversing. Consider a factorization $\{(G_i,[X_i])\}$, and a refinement $\{(G_i^j,[X_i^j]\}$, where for fixed $i$, $\{(G_i^j,[X_i^j])\}$ is a factorization of $(G_i,[X_i])$. Let $\mathcal G$ be the structure associated to the coarser factorization, and $\mathcal G^{ref}$ the structure associated to the refinement. Then in $\mathcal G$, each $G_i$ is full -- thus so is each $G_i^j$ (as $G_i^j\leq G_i$). So $\mathcal G$ can define all of the basic relations of $\mathcal G^{ref}$, as desired.
            \end{proof}
            
            \subsection{Corollaries and Theorem \ref{T: general}}

            We now give some further corollaries of the Galois correspondence from Theorem \ref{T: Galois Correspondence}. First, we show that the factorization into simple pairs is unique:
            
            \begin{corollary}\label{C: uniqueness} Let $(G,[X])$ be a semiabelian pair, where $X$ has finite stabilizer and generates $G$. Then $(G,[X])$ has exactly one factorization into simple semiabelian pairs.
            \end{corollary}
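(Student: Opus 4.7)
Existence is already provided by Lemma \ref{L: existence of factorization}, so the task reduces to uniqueness. My plan is to deduce this directly from the Galois correspondence of Theorem \ref{T: Galois Correspondence}, without any further model-theoretic input, via a simple order-theoretic observation.

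The first step is to observe that, in the refinement partial order on factorizations of $(G,[X])$, a factorization $\{(G_i,[X_i])\}$ consists of simple pairs if and only if it is \emph{maximal}. For the forward direction, if each factor $(G_i,[X_i])$ is simple, then any attempted refinement can only replace each $(G_i,[X_i])$ by the trivial one-factor refinement (since simple pairs admit no nontrivial factorization by definition), so the original factorization cannot be properly refined. Conversely, if some $(G_i,[X_i])$ is non-simple, then a nontrivial factorization of that single factor yields a strict refinement of the whole factorization.

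Now apply Theorem \ref{T: Galois Correspondence}. Since the correspondence is a bijection and order-reversing, maximal factorizations of $(G,[X])$ correspond precisely to minimum elements of the poset of $K$-relics expanding $(G,+,X)$, modulo interdefinability. But this poset trivially has a unique minimum, namely the relic $(G,+,X)$ itself, since by definition every relic under consideration is an expansion of this one. Hence there is at most one factorization of $(G,[X])$ into simple pairs, and combined with existence there is exactly one. Since Theorem \ref{T: Galois Correspondence} already supplies both bijectivity and order-reversal, the only genuine content of the argument is the maximality observation above; I anticipate no serious technical obstacle beyond carefully stating the elementary order-theoretic equivalence between ``simple factorization'' and ``maximal in the refinement order''.
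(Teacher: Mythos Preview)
Your proposal is correct and follows exactly the paper's own argument: identify simple factorizations with maximal ones, then use the order-reversing bijection of Theorem \ref{T: Galois Correspondence} to translate this into uniqueness of the minimal relic $(G,+,X)$. The paper's proof is simply a terser statement of the same three steps.
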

            \begin{proof} A factorization into simple pairs is the same as a maximal factorization. By the Galois correspondence, maximal factorizations are identified with minimal relics expanding $(G,+,X)$. There is only one such relic -- namely $(G,+,X)$ itself.
            \end{proof}

            Next, we show that under the assumptions of Theorem \ref{T: Galois Correspondence}, fullness characterizes simplicity. This gives another way to prove Theorem \ref{T: main}:
            
            \begin{corollary}\label{C: simple iff full}
                Let $(G,[X])$ be a semiabelian pair, where $X$ has finite stabilizer and generates $G$. Then $(G,[X])$ is simple if and only if the $K$-relic $(G,+,X)$ is full.
            \end{corollary}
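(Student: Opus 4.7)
The plan is to deduce this corollary directly from the Galois correspondence in Theorem \ref{T: Galois Correspondence}, by reading off what happens at the two extremes of the poset on each side.

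First I would identify the boundary objects. Every $K$-relic $\mathcal M$ expanding $(G,+,X)$ satisfies $(G,+,X) \sqsubseteq \mathcal M \sqsubseteq \mathcal M_{\max}$, where $\mathcal M_{\max}$ denotes the $K$-relic endowing $G$ with its full algebraic structure (in the sense of Fact \ref{F: full characterization}): the lower bound is immediate because any relic expanding $(G,+,X)$ interprets $+$ and $X$, and the upper bound is immediate because $K$-relics are by definition built from $K$-interpretable sets. Dually, the factorization poset of $(G,[X])$ has the trivial factorization $\{(G,[X])\}$ as its minimum (coarsest) element, and any factorization into simple pairs (which exists by Lemma \ref{L: existence of factorization}) as a maximum. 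Inspecting the construction in the proof of Theorem \ref{T: Galois Correspondence}, the trivial factorization -- having a single factor, namely $G$ itself -- is sent to the relic which equips that single factor with its full structure, namely $\mathcal M_{\max}$.

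Next I would close the argument in both directions. By definition, $(G,[X])$ is simple if and only if the trivial factorization is its only factorization, i.e., the factorization poset has a single element. By the order-reversing bijection of Theorem \ref{T: Galois Correspondence}, this is equivalent to the relic poset consisting of a single interdefinability class, which is in turn equivalent to its minimum $(G,+,X)$ and its maximum $\mathcal M_{\max}$ being interdefinable. Since $(G,+,X) \sqsubseteq \mathcal M_{\max}$ always holds, this last condition reduces to $\mathcal M_{\max} \sqsubseteq (G,+,X)$, which unpacks precisely to the statement that every $K$-constructible subset of every $G^n$ is already definable in $(G,+,X)$ -- that is, that $(G,+,X)$ is a full $K$-relic.

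I do not anticipate a substantive obstacle, since the real content has been carried by Theorem \ref{T: Galois Correspondence} (and ultimately by Proposition \ref{P: H=G}). The only point requiring a moment of care is verifying that the trivial factorization corresponds under the Galois map to $\mathcal M_{\max}$, but this is transparent from the explicit construction recalled above. The corollary is therefore a short bookkeeping argument sitting on top of the correspondence.
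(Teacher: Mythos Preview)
Your proposal is correct and follows essentially the same route as the paper: both arguments use the Galois correspondence of Theorem~\ref{T: Galois Correspondence} to translate ``only one factorization'' into ``only one relic expanding $(G,+,X)$'', which is equivalent to fullness. Your version simply makes explicit the identification of the extremal elements (trivial factorization $\leftrightarrow$ full structure $\mathcal M_{\max}$), which the paper leaves implicit in its two-sentence proof.
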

            \begin{proof} Note that $(G,+,X)$ is full if and only if there is exactly one $K$-relic expanding it. By the Galois correspondence, this is equivalent to the pair $(G,[X])$ having only one factorization -- which is the same as saying $(G,[X])$ is simple. 
            \end{proof}
            
            Before proving Theorem \ref{T: general}, we need to give a converse of Theorem \ref{T: main}. That is, we show that if $(G,[X])$ has a non-trivial factorization, $X$ does not geometrically determine $G$ (see Definition \ref{D: determines}). This was sketched in the introduction, but here we give the details.
            
            \begin{lemma}\label{L: determines implies simple} Let $(G,[X])$ be a semiabelian pair. If $X$ geometrically determines $G$, then $(G,[X])$ is simple.
            \end{lemma}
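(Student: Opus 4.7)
The plan is to prove the contrapositive: if $(G,[X])$ admits a non-trivial factorization, then $X$ does not geometrically determine $G$. Let $\{(G_i,[X_i])\}_{i=1}^n$ be such a factorization with $n\geq 2$, and after translating $X$ (which does not affect geometric determination), assume $X = X_1 + \cdots + X_n$ with each $X_i \subseteq G_i$. Write $s: \prod_i G_i \to G$ for the sum isogeny and $N$ for its finite kernel. I will exhibit an automorphism $\phi$ of $(G(K),+,X(K))$ that cannot be written as $f \circ \sigma_0$ for any bijective $K$-isogeny $f$ and any field automorphism $\sigma_0$ of $K$.

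To construct $\phi$, fix a finitely generated subfield $K_0 \subseteq K$ over which $G$, each $G_i$, each $X_i$, and the (finitely many) points of $N$ are all defined. Since $K_0$ is not algebraically closed, $\mathrm{Aut}(K/K_0)$ is nontrivial, and a simple counting argument (or, in characteristic zero, its triviality) shows it contains elements that are not powers of Frobenius; pick such a $\sigma$. Define $\tilde\phi: \prod_i G_i(K) \to \prod_i G_i(K)$ to act as $\sigma$ coordinate-wise on the $G_2$-component (via the $K_0$-structure) and as the identity on the other components. Then $\tilde\phi$ is a group automorphism, fixes $N$ pointwise, and preserves $X_1 \times \cdots \times X_n$ setwise, so it descends through $s$ to a group automorphism $\phi$ of $G(K)$ preserving $X(K)$; moreover $\phi$ restricts to the identity on $G_1(K)$ and to the field-automorphism action of $\sigma$ on $G_2(K)$.

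The main obstacle is showing $\phi$ cannot be written as $f \circ \sigma_0$. Suppose for contradiction such a decomposition exists. Since $\phi$ preserves both $G_1(K)$ and $G_2(K)$ setwise, $f$ must restrict to a $K$-isomorphism from the $\sigma_0$-twist $G_i^{\sigma_0}$ to $G_i$ for $i = 1, 2$. The relation $\phi|_{G_1(K)} = \mathrm{id}$ then says that $\sigma_0$ acts on $G_1(K)$ as the $K$-rational map $f^{-1}$; projecting $G_1$ dominantly onto an affine line via a coordinate and using that $\sigma_0$ is both additive and multiplicative, one deduces that $\sigma_0$ agrees on $K$ with a polynomial in one variable, which, being a field automorphism, must be either the identity or a Frobenius power. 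The symmetric argument on $G_2$, using $\phi|_{G_2(K)} = \sigma$, shows that $\sigma \circ \sigma_0^{-1}$ is likewise a Frobenius power, and combining forces $\sigma$ itself to be a Frobenius power, contradicting our choice. A cleaner alternative, valid when $X$ has finite stabilizer and generates $G$, is to invoke Corollary~\ref{C: simple iff full} directly: the non-fullness of $(G,+,X)$ produces, in a saturated model, an automorphism of $(G,+,X)$ that does not preserve some $K$-definable relation on $G$, and such an automorphism cannot be of the form $f \circ \sigma_0$.
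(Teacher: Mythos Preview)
Your setup matches the paper's exactly: both construct the automorphism $(\mathrm{id},\sigma)$ acting trivially on $G_1$ and by a chosen field automorphism on $G_2$, over a finitely generated subfield. The divergence is in the endgame.

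You try to show directly that a \emph{single} well-chosen $\sigma$ cannot factor, by arguing that any decomposition $(\mathrm{id},\sigma)=f\circ\sigma_0$ forces $\sigma_0$ (and then $\sigma$) to be a Frobenius power. The step ``$\sigma_0$ agrees on $K$ with a polynomial in one variable'' has a genuine gap. First, in positive characteristic $f^{-1}$ is not a rational map (bijective isogenies need not be isomorphisms), so the assertion that $\sigma_0$ acts as ``the $K$-rational map $f^{-1}$'' is false as stated. Second, even in characteristic zero, projecting $G_1$ to $\mathbb{A}^1$ via a coordinate does not yield $\sigma_0$ as a one-variable polynomial: the value of $f^{-1}$ at $P\in G_1$ depends on all coordinates of $P$, and when $\dim G_1>1$ the image of the graph of $f^{-1}$ under $G_1\times G_1^{\sigma_0}\to\mathbb{A}^1\times\mathbb{A}^1$ can have dimension up to $\dim G_1$, giving no constraint on the graph of $\sigma_0|_K$. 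So the reduction to ``field automorphism equal to a polynomial, hence Frobenius'' is not justified.

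The paper sidesteps this with a counting argument: rather than identifying $\sigma_0$, it observes that the pair $(G',f)$ \emph{determines} $\sigma_0$ (since $\sigma_0$ inverts $f$ on $G_1$-points, and a field automorphism is determined by its values on the coordinates of those points), and hence determines $\tau$. There are at most $|K|$ such pairs $(G',f)$, while $|\mathrm{Gal}(K/K_0)|>|K|$, so some $\tau$ fails to factor. This avoids analyzing $\sigma_0$ at all. Your ``cleaner alternative'' via Corollary~\ref{C: simple iff full} only applies under extra hypotheses not assumed in the lemma, and the passage from non-fullness to an automorphism not of the form $f\circ\sigma_0$ is not automatic: maps of that form need not preserve all $K$-definable relations either (they move parameters), so the dichotomy you invoke does not hold as stated.
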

            \begin{proof} Suppose $(G,[X])$ is not simple. So there is a factorization into two pairs, say $(G_1,[X_1])$ and $(G_2,[X_2])$. We will build a group automorphism of $G$ which stabilizes $X$ and cannot be expressed as the composition of a field automorphism with an isogeny.

            Let $F$ be a finitely generated field capable of defining each of the following:
            \begin{itemize}
                \item $(G,+,X)$.
                \item Each $(G_i,+,X_i)$.
                \item Each element of the kernel of the sum map $G_1\times G_2\rightarrow G$.
            \end{itemize}

            If $\sigma,\tau\in\operatorname{Gal}(K/F)$, there is an automorphism $(\sigma,\tau)$ of $(G,+,X)$ which acts as $\sigma$ on $G_1$ and $\tau$ on $G_2$ (this map is well-defined because $\sigma$ and $\tau$ both act trivially on the finite intersection $G_1\cap G_2\subseteq F$).

            Now for $\tau\in\operatorname{Gal}(K/F)$, consider the automorphism $(\operatorname{id},\tau)$ as above (acting trivially on $G_1$). If $X$ geometrically determines $G$, we can write $(\operatorname{id},\tau)$ as a composition $f\circ\sigma$, where $\sigma:G\rightarrow G'$ is (induced by) an automorphism of $K$ and $f:G'\rightarrow G$ is a bijective isogeny.

            So $f\circ\sigma$ gives the identity in $G_1$, which means that $\sigma$ is uniquely determined by the pair $(G',f)$ (as $\sigma$ must invert $f$ on $G_1$-points, and we can read off the action of $\sigma$ on $K$ from the coordinates of $G_1$-points). But then, arguing similarly in $G_2$ using the equation $\tau=f\circ\sigma$, it follows that $\tau$ is also determined by the pair $(G',f)$. In particular, the number of $\tau\in\operatorname{Gal}(K/F)$ for which $(\operatorname{id},\tau)$ factors in the desired form is at most the number of isogenies over $K$ with image $G$. Each such isogeny is determined by finitely many coefficients, so the number of such isogenies is at most $|K|$. 
            
            On the other hand, the group $\operatorname{Gal}(K/F)$ has strictly greater cardinality than $|K|$. This can be seen in cases. If $K$ has infinite transcendence degree over $F$, then (as $F$ is finitely generated) the transcendence degree of $K$ over $F$ is exactly $|K|$; now pick a transcendence basis, and any permutation of the basis extends to an automorphism. On the other hand, if $K$ has finite trascendence degree over $F$, then after enlargening $F$, we may assume $K$ is the algebraic closure of $F$. Then $\operatorname{Gal}(K/F)$ is profinite and infinite, and thus uncountable (so bigger than $K$).

            Now we have shown that the size of $\operatorname{Gal}(K/F)$ is strictly greater than the number of elements $\tau$ so that $(\operatorname{id},\tau)$ factors in the desired form. So, there is $\tau$ for which $(\operatorname{id},\tau)$ does not factor in the desired form, and thus $X$ does not geometrically determine $G$.
            \end{proof}

            Combining Lemmas \ref{L: full implies determines} and \ref{L: determines implies simple} and Corollary \ref{C: simple iff full} now gives:

            \begin{corollary}\label{C: big equivalence}
                Let $(G,[X])$ be a semiabelian pair, where $X$ has finite stabilizer and generates $G$. Then the following are equivalent:
                \begin{enumerate}
                    \item $(G,+,X)$ is full.
                    \item $X$ geometrically determines $G$.
                    \item $(G,[X])$ is simple.
                \end{enumerate}
            \end{corollary}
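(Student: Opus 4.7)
The plan is to observe that this corollary is essentially a bookkeeping exercise: the three conditions have already been shown to imply each other pairwise in the preceding results, so all that remains is to assemble them into a cycle.

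First, I would obtain the implication (1) $\Rightarrow$ (2) directly from Lemma \ref{L: full implies determines}: if the $K$-relic $(G,+,X)$ is full, then $X$ geometrically determines $G$, with no need for the hypotheses on the stabilizer or on $X$ generating $G$ (the lemma applies in full generality). Next, I would obtain (2) $\Rightarrow$ (3) from Lemma \ref{L: determines implies simple}, which says that if $X$ geometrically determines $G$ then the semiabelian pair $(G,[X])$ must be simple; again this holds without the extra assumptions, but in our setting we have them for free.

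Finally, I would close the cycle using Corollary \ref{C: simple iff full}, which gives (3) $\Leftrightarrow$ (1) precisely under the hypotheses that $X$ has finite stabilizer and generates $G$; in particular it yields (3) $\Rightarrow$ (1). Chaining (1) $\Rightarrow$ (2) $\Rightarrow$ (3) $\Rightarrow$ (1) establishes the equivalence of all three statements.

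There is no real obstacle here, since all the substantive work has been done in the earlier results. The one minor point worth flagging is that Corollary \ref{C: simple iff full} is the only step that genuinely requires the standing assumptions that $X$ has finite stabilizer and generates $G$ (these assumptions are inherited from the Galois correspondence of Theorem \ref{T: Galois Correspondence}, which underlies that corollary); the other two implications are unconditional. So the proof is just a one-line citation of the three previous results arranged in a triangle.
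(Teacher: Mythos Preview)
Your proposal is correct and matches the paper's proof exactly: the paper simply says the corollary follows by combining Lemma~\ref{L: full implies determines}, Lemma~\ref{L: determines implies simple}, and Corollary~\ref{C: simple iff full}, which is precisely the cycle $(1)\Rightarrow(2)\Rightarrow(3)\Rightarrow(1)$ you describe. Your observation about which implication actually needs the hypotheses on $X$ is accurate and a nice addition.
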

            
            Finally, we are ready to prove Theorem \ref{T: general}:

            \begin{proof}[Proof of Theorem \ref{T: general}]
                We are given a semiabelian pair $(G,[X])$ where $X$ has finite stabilizer and generates $G$. Note that the same holds for any factorization $\{(G_i,[X_i])\}$ -- that is, each $X_i$ has finite stabilizer and generates $G_i$ (otherwise the corresponding property would fail in $(G,[X])$). Thus Corollary \ref{C: big equivalence} applies to the factors in any factorization of $(G,[X])$. In particular, a factorization of $(G,[X])$ into simple semiabelian pairs is the same as a factorization into `geometrically determined pairs' (pairs $(G_i,[X_i])$ where $X_i$ geometrically determines $G_i$). By Corollary \ref{C: uniqueness}, there is exactly one such factorization, and this is equivalent to the statement of Theorem \ref{T: general}.
            \end{proof}

        \section{More General Factorizations}

        So far, our best results on factorizations of semiabelian pairs $(G,[X])$ concern the case where $X$ has finite stabilizer and generates $X$. In this section, we prove slightly weaker results without these hypotheses. The most complete result will only work if $G$ is an abelian variety. This is because we will use the usual decomposition of an abelian variety into simple factors. Regardless, we still work in the language of semiabelian pairs, clarifying as we go when we need to be more restrictive.

        \subsection{Weak Isogeny} Recall that isogenies define an equivalence relation on semiabelian varieties. That is, if there is an isogeny from $G$ to $H$, then there is an isogeny from $H$ to $G$. This fails for isogenies of semiabelian pairs, essentially because if $f:G\rightarrow H$ sends $X\subset G$ to $Y\subset H$, the isogeny $g:H\rightarrow G$ need not send $Y$ to $X$:

        \begin{example}\label{E: isogeny counterexample} Let $C$ be a smooth irreducible projective curve of genus $g\geq 2$ over the complex numbers, such that the (algebraic) automorphism group of $C$ is trivial. Consider the abelian pair $(J,[C])$, where $J$ is the Jacobian of $C$. After distinguishing a point in $C$, view $C$ as a closed curve in $J$ containing $0$. For any $n\geq 1$,  we have an isogeny of semiabelian pairs $f:(J,[C])\rightarrow(J,[nC])$ given by scaling by $n$; but we claim that for $n\geq 2$ there is no isogeny of semiabelian pairs $g:(J,[nC])\rightarrow(J,[C])$. Indeed, let $g$ be such an isogeny. Then $g(nC)$ is a translate of $C$, say $g(nC)=C+a$. So $x\mapsto g\circ f(x)-a$ is a (necessarily finite) morphism $C\rightarrow C$. By Riemann-Hurwitz (and the fact that $g\geq 2$), this map is an automorphism, and is thus the trivial automorphism. So for $x\in C$, we have $g\circ f(x)-a=x$, i.e. $g\circ f(x)-x$ is constant. Since $0\in C$ and $f$ and $g$ are isogenies, we have $g\circ f(0)-0=0$. So $g\circ f(x)-x$ is constant 0 on $C$. Since $C$ generates $J$, $g\circ f(x)=x$ holds on all of $J$. That is, $g=f^{-1}$. But for $n\geq 2$, $J$ has non-trivial $n$-torsion, and so scaling by $n$ is not invertible. 
        \end{example}

        To remedy the situation, we define:

        \begin{definition}\label{D: w-siog}
            Let $(G_1,[X_1])$ and $(G_2,[X_2])$ be semiabelian pairs. We say that $(G_1,[X_1])$ and $(G_2,[X_2])$ are \textit{weakly isogenous} if there are a semiabelian pair $(H,[Y])$ and isogenies $(H,[Y])\rightarrow(G_1,[X_1])$ and $(H,[Y])\rightarrow(G_2,[X_2])$.
        \end{definition}

        Note that the pairs $(J,[C])$ and $(J,[nC])$ from Example \ref{E: isogeny counterexample} are weakly isogenous, witnessed by the isogenies $\operatorname{id}:(J,[C])\rightarrow(J,[C])$ $f:(J,[C])\rightarrow(J,[nC])$. And, indeed, we have:

        \begin{lemma}\label{L: iso equiv rel}
            Weak isogeny gives an equivalence relation on semiabelian pairs.
        \end{lemma}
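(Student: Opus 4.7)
The plan is to verify the three properties of an equivalence relation. Reflexivity is witnessed by the pair $(G,[X])$ itself, using the identity isogeny on both sides. Symmetry is immediate from the symmetric form of Definition \ref{D: w-siog}: the roles of $(G_1,[X_1])$ and $(G_2,[X_2])$ can be swapped without changing the witnessing pair $(H,[Y])$. So the real content is transitivity, and that is where I would focus the argument.

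For transitivity, suppose $(G_1,[X_1])$ is weakly isogenous to $(G_2,[X_2])$ via $(H,[Y])$ with isogenies $f_i \colon H \to G_i$, and $(G_2,[X_2])$ is weakly isogenous to $(G_3,[X_3])$ via $(H',[Y'])$ with isogenies $g_j \colon H' \to G_j$. The natural candidate for a witnessing pair $(K,[Z])$ between $(G_1,[X_1])$ and $(G_3,[X_3])$ is built from the fiber product $F = \{(h,h') \in H \times H' : f_2(h) = g_2(h')\}$. Since $f_2$ and $g_2$ are surjective with finite kernels, the two projections from $F$ to $H$ and to $H'$ are surjective with finite kernels as well. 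I would then let $K$ be the connected component of the identity in $F$; this is a connected algebraic subgroup of the semiabelian variety $H \times H'$, hence itself semiabelian, and the projections $\pi \colon K \to H$ and $\pi' \colon K \to H'$ remain surjective isogenies.

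To equip $K$ with a pair structure, I would use the fact that $f_2 \circ \pi = g_2 \circ \pi'$ by construction, giving a single isogeny $p \colon K \to G_2$. By Lemma \ref{L: isogeny}, there is a unique equivalence class $[Z]$ on $K$ with $p((K,[Z])) = (G_2,[X_2])$. The task is then to show that $f_1 \circ \pi$ sends $(K,[Z])$ to $(G_1,[X_1])$ and $g_3 \circ \pi'$ sends it to $(G_3,[X_3])$. For the first assertion, applying Lemma \ref{L: isogeny} to $\pi \colon K \to H$ and $(H,[Y])$ yields a unique lift $(K,[Z''])$; composing further with $f_2$ shows $(K,[Z''])$ maps to $(G_2,[X_2])$ under $p$, so by the uniqueness of $[Z]$ we have $[Z''] = [Z]$, and then $f_1 \circ \pi$ lands in $[X_1]$. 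The same argument through $\pi'$ handles the $G_3$ side.

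The main obstacle I anticipate is purely organizational: keeping straight which uniqueness clause of Lemma \ref{L: isogeny} is being invoked and showing that $K$ is genuinely a semiabelian variety (taking the identity component is the subtle point, since the raw fiber product $F$ need not be connected). Once that is in place, the argument is essentially formal, and the two applications of Lemma \ref{L: isogeny} — one to pin down $[Z]$ from $(G_2,[X_2])$, one to identify it with the lift from $(H,[Y])$ — do all the real work.
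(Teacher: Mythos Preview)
Your proposal is correct and follows essentially the same approach as the paper: for transitivity you take the connected component of the fiber product over $G_2$, equip it with a pair structure, and use the uniqueness clause of Lemma \ref{L: isogeny} to verify that this structure maps correctly to both ends. The only cosmetic difference is that you define $[Z]$ by pulling back from $(G_2,[X_2])$ and then check both sides symmetrically, whereas the paper pulls back from $(H_1,[Y_1])$ first and invokes uniqueness once at $H_2$; the underlying mechanism is identical.
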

        \begin{proof}
            Reflexivity and Symmetry are clear. For transitivity, suppose we have isogenies $(H_1,[Y_1])\rightarrow(G_1,[X_1])$, $(H_1,[Y_1])\rightarrow(G_2,[X_2])$, $(H_2,[Y_2])\rightarrow(G_2,[X_2])$, and $(H_2,[Y_2])\rightarrow(G_3,[X_3])$. We show that $(G_1,[X_1])$ and $(G_3,[X_3])$ are weakly isogenous.
            
            Indeed, let $H_3$ be the connected component of the fiber product of $H_1$ and $H_2$ over $G_2$. Then $H_3\rightarrow H_1$ and $H_3\rightarrow H_2$ are isogenies. By Lemma \ref{L: isogeny}, there is $Y_3$ so that $H_3\rightarrow H_1$ determines an isogeny of semiabelian pairs $(H_3,[Y_3])\rightarrow(H_1,[Y_1])$. Thus $H_3\rightarrow H_1\rightarrow G_1$ gives an isogeny $(H_3,[Y_3])\rightarrow(G_1,[X_1])$. We claim there is also an isogeny $(H_3,[Y_3])\rightarrow(G_3,[X_3])$. Indeed, denote the image of $(H_3,[Y_3])$ in $H_2$ by $(H_2,[Y_4])$. Since the maps $H_3\rightarrow H_1\rightarrow G_2$ and $H_3\rightarrow H_2\rightarrow G_2$ agree, we have the isogeny $(H_2,[Y_4])\rightarrow(G_2,[X_2])$. By Lemma \ref{L: isogeny}, the preimage of $[X_2]$ in $H_2$ is unique; so $[Y_4]=[Y_2]$, and thus the composition $H_3\rightarrow H_2\rightarrow G_3$ sends $[Y_3]\rightarrow[Y_2]\rightarrow[X_3]$ as desired. 
        \end{proof}

        Weak isogeny is thus the equivalence relation generated by isogenies.

        \subsection{Factorization of Weak Isogeny Classes}

        If $(G,[X])$ is a semiabelian pair, we denote by $[G,X]$ its weak isogeny class. Given two weak isogeny classes $[G,X]$ and $[H,Y]$, the product $[G\times H,X\times Y]$ is a well-defined weak isogeny class (this is easy to check). Thus we have a direct product operation on weak isogeny classes. We also have a trivial class, consisting only of the trivial group with the identity element. We now define:

        \begin{definition}
            A weak isogeny class is \textit{simple} if it is not the trivial class, and it cannot be factored as a product of two non-trivial classes.
        \end{definition}

        If $f:(G,[X])\rightarrow(H,[Y])$ is an isogeny of semiabelian pairs, then $f$ identifies factorizations of $(G,[X])$ with factorizations of $(H,[Y])$ and vice versa (via Lemmas \ref{L: isogeny closed} and \ref{L: isogeny}). Thus, factorizations of $[G,X]$ as a weak isogeny class correspond with factorizations of the semiabelian pair $(G,[X])$ up to weak isogeny. It follows, for example, that a weak isogeny class is simple if and only if some (equivalenly any) of its members is simple.

        As in Lemma \ref{L: existence of factorization}, the existence of simple factorizations is automatic:

        \begin{lemma}\label{L: weak existence of factorization} Every weak isogeny class can be factored into simple classes.
        \end{lemma}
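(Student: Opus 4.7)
The plan is to reduce this directly to Lemma \ref{L: existence of factorization}, which gives the analogous statement at the level of semiabelian pairs. First, pick any representative $(G,[X])$ of the given weak isogeny class. By Lemma \ref{L: existence of factorization}, there is a factorization $\{(G_i,[X_i])\}_{i=1}^n$ of $(G,[X])$ into simple semiabelian pairs. This is by construction a factorization of the semiabelian pair $(G,[X])$ in the sense of Definition \ref{D: decomposition}, so in particular the sum map $G_1\times\cdots\times G_n\to G$ is an isogeny from $(G_1\times\cdots\times G_n,[X_1\times\cdots\times X_n])$ to $(G,[X])$.

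Next, I would pass to weak isogeny classes. The product of weak isogeny classes is well-defined, and the existence of the isogeny above shows that $[G_1\times\cdots\times G_n, X_1\times\cdots\times X_n] = [G,X]$ as weak isogeny classes. Equivalently, using compatibility of the product operation with weak isogeny, $[G,X] = [G_1,X_1]\times\cdots\times[G_n,X_n]$. So the sequence $\{[G_i,X_i]\}$ gives a factorization of $[G,X]$ into weak isogeny classes.

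The only remaining point is to verify that each factor $[G_i,X_i]$ is simple as a weak isogeny class. This is exactly the observation recorded in the paragraph immediately preceding the lemma: factorizations of a weak isogeny class correspond (via picking any representative and using Lemmas \ref{L: isogeny closed} and \ref{L: isogeny}) with factorizations of the semiabelian pair, so a weak isogeny class is simple iff any one of its representatives is simple as a semiabelian pair. Since each $(G_i,[X_i])$ is simple by construction, each $[G_i,X_i]$ is simple as a weak isogeny class, completing the factorization.

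There is essentially no obstacle here once the framework is set up; the real work was done in Lemma \ref{L: existence of factorization} (a dimension-counting argument) and in verifying that simplicity passes through weak isogeny. The only small point to keep in mind is that one must not attempt to prove existence by a bounded-refinement argument directly on weak isogeny classes, since the obvious dimension invariant requires choosing a representative anyway; reducing to Lemma \ref{L: existence of factorization} is cleaner.
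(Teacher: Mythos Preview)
Your argument is correct. The paper takes a slightly different route: rather than reducing to Lemma \ref{L: existence of factorization}, it repeats that lemma's maximal-refinement argument directly at the level of weak isogeny classes (choose a factorization with the largest number of factors, bounded by $\dim(G)$, and observe each factor must be simple). Your approach is arguably cleaner in that it reuses the existing lemma and the already-stated equivalence ``simple pair $\Leftrightarrow$ simple class'' from the preceding paragraph, whereas the paper's version is more self-contained but essentially duplicates the earlier proof. Your closing remark that the direct bounded-refinement argument ``requires choosing a representative anyway'' is exactly right---the paper's appeal to $\dim(G)$ implicitly does this---so the two routes are doing the same work in different packaging.
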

        \begin{proof} Choose a factorization with the largest number of factors (which exists because every factorization has at most $\dim(G)$ factors). Then each factor must be simple, or else we could strictly refine the factorization.
        \end{proof}

        Moreover, our previous work gives us:

        \begin{lemma}\label{L: uniqueness} Let $[G,X]$ be a weak isogeny class, where $X$ has finite stabilizer and generates $G$. Then the factorization of $[G,X]$ into simple classes is unique.
        \end{lemma}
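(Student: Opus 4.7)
The plan is to reduce Lemma \ref{L: uniqueness} to its semiabelian-pair analog, Corollary \ref{C: uniqueness}, by invoking the correspondence (stated just before Lemma \ref{L: weak existence of factorization}) between factorizations of the weak isogeny class $[G,X]$ and factorizations of the semiabelian pair $(G,[X])$ taken up to weak isogeny. The hypothesis that $X$ has finite stabilizer and generates $G$ is invariant under isogeny, so it is really a property of the class $[G,X]$; in particular it holds for the representative $(G,[X])$.

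First I would apply Corollary \ref{C: uniqueness} directly to $(G,[X])$ to obtain its unique factorization $\{(G_i,[X_i])\}_{i=1}^n$ into simple semiabelian pairs. Passing to weak isogeny classes yields a factorization $\{[G_i,X_i]\}_{i=1}^n$ of $[G,X]$, and each class is simple since simplicity of a semiabelian pair is equivalent to simplicity of its weak isogeny class (as noted right after Lemma \ref{L: iso equiv rel}). This establishes existence (which is in any case already guaranteed by Lemma \ref{L: weak existence of factorization}); the real content is uniqueness.

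For uniqueness, suppose $\{[H_j,Y_j]\}_{j=1}^m$ is any factorization of $[G,X]$ into simple classes. By the correspondence, I can realize it by a genuine factorization $\{(H_j',[Y_j'])\}_{j=1}^m$ of the pair $(G,[X])$ with each $[H_j',Y_j'] = [H_j,Y_j]$. Since each $[H_j,Y_j]$ is simple, so is each $(H_j',[Y_j'])$. Moreover, as observed in the proof of Theorem \ref{T: general}, the property that $X$ has finite stabilizer and generates $G$ forces each $Y_j'$ to have finite stabilizer in $H_j'$ and to generate $H_j'$. Hence Corollary \ref{C: uniqueness} applies and tells us that $\{(H_j',[Y_j'])\}$ and $\{(G_i,[X_i])\}$ coincide up to reordering as factorizations of $(G,[X])$ into simple pairs. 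Passing back to weak isogeny classes gives $\{[H_j,Y_j]\} = \{[G_i,X_i]\}$ up to reordering, which is what we wanted.

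I do not anticipate a substantive obstacle here: the proof is essentially a bookkeeping exercise built on top of Corollary \ref{C: uniqueness}. The only point requiring a little care is ensuring that a factorization of the weak isogeny class can be lifted to an actual factorization of the fixed representative $(G,[X])$ — and this is exactly what the correspondence quoted above provides.
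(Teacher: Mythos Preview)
Your proposal is correct and takes essentially the same approach as the paper: the paper's entire proof is the single sentence ``This is a restatement of Corollary \ref{C: uniqueness},'' and you have simply spelled out the bookkeeping behind that sentence, using the correspondence between factorizations of $[G,X]$ and factorizations of $(G,[X])$ up to weak isogeny that the paper records just before Lemma \ref{L: weak existence of factorization}. The extra remark that each factor inherits finite stabilizer and generation is harmless but not needed, since Corollary \ref{C: uniqueness} is applied only to $(G,[X])$ itself.
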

        \begin{proof} This is a restatement of Corollary \ref{C: uniqueness}.
        \end{proof}

        \begin{remark}\label{R: conditions isogeny invariant}
            One should note that the conditions of Lemma \ref{L: uniqueness} are weak isogeny invariant. Indeed, if $f:(G,[X])\rightarrow(H,[Y])$ is an isogeny of semiabelian pairs, once checks easily that some (equivalently any) representative of $[X]$ generates $G$ if and only some (equivalently any) representative of $[Y]$ generates $H$; and moreover, $f$ sends the connected component of $\operatorname{Stab}(X)$ to the connected component of $\operatorname{Stab}(Y)$.
        \end{remark}

        \subsection{Characterization of Simple Classes}

        Lemma \ref{L: weak existence of factorization} is slightly misleading, because if we consider arbitrary subvarieties (rather than generating subvarieties with finite stabilizer), there can be exotic simple classes. For example, one can find a semiabelian variety $G$, and a semiabelian subvariety $H\leq G$, such that $H$ has no complement in $G$ (i.e. there is no $H'$ with $G$ isogenous to $H\times H'$). In this case, if $H$ and $G/H$ are simple as semiabelian varieties, then $(G,[H])$ is simple as a semiabelian pair.

        In contrast, in the case of abelian varieties, the usual simple factorization lets us give a nicer characterization. Note that if $G$ and $H$ are isogenous semiabelian vareities, and one is an abelian variety, then so is the other. Thus, we can speak of \textit{weak isogeny classes of abelian pairs.}

        \begin{lemma}\label{L: simple characterization}
            Let $[G,X]$ be a weak isogeny class of abelian pairs. Then $[G,X]$ is simple if and only if one of the following holds:
            \begin{enumerate}
                \item $G$ is a simple abelian variety and $X=G$.
                \item $G$ is a simple abelian variety and $X$ is a single point.
                \item $(G,+,X)$ is a full relic.
            \end{enumerate}
            \end{lemma}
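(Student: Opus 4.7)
The plan is to verify both directions of the equivalence, using previously established tools for the $(\Leftarrow)$ side and Poincaré reducibility for the $(\Rightarrow)$ side.

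For $(\Leftarrow)$: In cases (1) and (2), I would argue that any nontrivial factorization $(G_1,[X_1])\times(G_2,[X_2])$ weakly isogenous to $[G,X]$ yields, through a common representative $H$ isogenous to $G$, an isogeny $H\to G_1\times G_2$ with each $\dim G_i\geq 1$. Since $H$ is simple as an abelian variety (being isogenous to the simple $G$), each projection $H\to G_i$ must be an isogeny, forcing $\dim H=\dim G_i$ for both $i$, contradicting $\dim H=\dim G_1+\dim G_2$. For case (3), I would combine Lemma \ref{L: full implies determines} (fullness implies geometric determination) with Lemma \ref{L: determines implies simple} (geometric determination implies simplicity) to conclude directly.

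For $(\Rightarrow)$: Assuming $[G,X]$ simple, I would distinguish three cases. If $X$ is a single point, then any nontrivial factorization of $G$ as an abelian variety produces a corresponding nontrivial factorization of $(G,[X])$ into pairs whose distinguished subvarieties are single points; simplicity of the pair thus forces $G$ to be simple, giving case (2). The case $X=G$ is analogous and yields case (1). In the remaining case $0<\dim X<\dim G$, the main task is to show that $X$ generates $G$ and has finite stabilizer, after which Corollary \ref{C: big equivalence} delivers case (3). For both sub-claims I would invoke Poincaré reducibility (the input specific to abelian varieties, hence the restriction in the lemma). If $X$ fails to generate $G$, then after translation $X\subseteq H$ for a proper abelian subvariety $H$, and a Poincaré complement $H'$ would give, via Lemma \ref{L: isogeny}, a nontrivial factorization $(H,[X])\times(H',[\{0\}])$ weakly isogenous to $(G,[X])$, contradicting simplicity. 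Similarly, if the connected component $S$ of $\operatorname{Stab}(X)$ were positive-dimensional, then $S\neq G$ (since $X\neq G$), and a Poincaré complement $S'$ would give a nontrivial factorization $(S,[S])\times(S',[\bar X])$, where $\bar X\subseteq S'$ corresponds to the image of $X$ in $G/S$.

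The main obstacle I anticipate is the bookkeeping around the translation ambiguity intrinsic to semiabelian pairs: specifically, using Lemma \ref{L: isogeny} to verify that the preimages of $[X]$ under the Poincaré splittings are exactly the claimed product translation classes, and that the factorizations produced in the two sub-claims are genuinely nontrivial (both factors of positive dimension). Once this is handled, the rest of the argument assembles cleanly from ingredients already developed earlier in the paper.
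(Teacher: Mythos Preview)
Your proposal is correct and follows essentially the same strategy as the paper: Poincar\'e complements to produce nontrivial factorizations in the forward direction, and the established equivalences for the converse. The only differences are organizational---the paper splits the forward direction by the trichotomy ``infinite stabilizer / contained in a proper coset / otherwise'' rather than by $\dim X$, and for the converse in case~(3) it invokes Corollary~\ref{C: big equivalence} directly rather than your (slightly cleaner) chain through Lemmas~\ref{L: full implies determines} and~\ref{L: determines implies simple}, which avoids having to check the finite-stabilizer and generation hypotheses.
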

            \begin{proof} First suppose $(G,[X])$ is simple. 
            \begin{itemize}
                \item Case 1: $X$ has infinite stabilizer. Then let $H$ be the connected component of the stabilizer, and let $H'\leq G$ be a complement of $H$ (see, e.g., \cite[II, \S 2, p.30]{LangAV}, so that the sum map $H\times H'\rightarrow G$ is an isogeny. The projection $H'\rightarrow G/H$ is an isogeny. By Lemma \ref{L: isogeny}, there is a semiabelian pair $(H',[Y])$ projecting to $(G/H,[X/H])$. Then $H+Y=X$, so that $[G,X]$ factors into $[H,H]$ and $[H',Y]$. By simplicity, $H=G$, and thus also $X=G$. So our original class is just $[G,G]$. A factorization of this class corresponds to a usual factorization of abelian varieties. So by simplicity of the class, $G$ is simple, and we have case (1) of the lemma.
                \item Case 2: $X$ is contained in a coset of a proper abelian subvariety $H<G$. As in case 1, let $H'\leq G$ be a complement of $H$. Let $a\in H'$ be an element mapping to the coset containing $X$ under the isogeny $H'\rightarrow G/H$. Then $[G,X]$ factors as the product of $[H,X]$ and $[H',\{a\}]$. By simplicity, $H$ is the trivial subgroup. Thus our original class is just $[G,X]$ with $X$ a single point. Again, a factorization of $[G,X]$ now corresponds to a factorization of $G$ into simple abelian varieties, and by simplicity, it follows that $G$ is simple.
                \item Case 3: $X$ has finite stabilizer and generates $G$. Then by Theorem \ref{T: main} (and simplicity), $(G,+,X)$ is full.
            \end{itemize}
            
            Now we show the converse. If either (1) or (2) holds (i.e. $G$ is a simple abelian variety and $X$ is either $G$ or a point), then simplicity is clear. Now assume (3) holds. Again, since (1) and (2) are clear, we may also assume (1) and (2) fail. Then simplicity follows by Corollary \ref{C: big equivalence}.
            \end{proof}

            \begin{remark} Note that conditions (1)-(3) of Lemma \ref{L: simple characterization} are mutually exclusive (except if $G$ is the trivial group, in which case (1) and (2) coincide). The fact that (3) is incompatible with (1) and (2) is because in cases (1) and (2), the relic $(G,+,X)$ is a pure abelian group.
            \end{remark}

            \begin{remark}
                A corollary of Lemma \ref{L: simple characterization} is that conditions (1)-(3) are weak isogeny invariant. This is most interesting for (3), as for (1) and (2) it is clear.
            \end{remark}

            Before moving on to the main result, we point out that Lemma \ref{L: simple characterization} allows for a complete answer to Question \ref{Q: main question}:

            \begin{corollary}\label{C: determines characterization} Suppose $G$ is an abelian variety and $X$ is a closed irreducible subvariety. Then the following are equivalent:
            \begin{enumerate}
                \item $X$ geometrically determines $G$.
                \item $(G,[X])$ is simple and $X$ is not a single point or all of $G$.
                \item $G$ and $X$ satisfy the hypotheses of Theorem \ref{T: main}.
            \end{enumerate}
            \end{corollary}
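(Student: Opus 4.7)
The plan is to establish the three implications $(3) \Rightarrow (1) \Rightarrow (2) \Rightarrow (3)$, leveraging the machinery already developed earlier in the paper. The implication $(3) \Rightarrow (1)$ is essentially a restatement of Theorem \ref{T: main}, which applies a fortiori in the abelian case.

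For $(1) \Rightarrow (2)$, I would first invoke Lemma \ref{L: determines implies simple} to conclude that if $X$ geometrically determines $G$, then $(G,[X])$ is simple. It then remains to exclude the cases $X = \{pt\}$ and $X = G$. In either case, the set $V(K)$ carries no information beyond being preserved by every group automorphism of $G(K)$, so to contradict geometric determination it suffices to produce a group automorphism of $G(K)$ not arising as a composition of a field automorphism with a bijective isogeny. One can argue along the lines of Lemma \ref{L: determines implies simple}: using any non-trivial decomposition of $G$ up to isogeny into abelian subvarieties $G_1, G_2$ (or, if $G$ is simple, using the divisible-abelian-group structure on $G(K)$ together with a cardinality argument), one obtains at least $2^{|K|}$ candidate group automorphisms of $G(K)$, while there are only $|K|$-many compositions of the allowed form.

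For $(2) \Rightarrow (3)$, I would apply Lemma \ref{L: simple characterization}: simplicity of $(G,[X])$ puts us in one of the three mutually exclusive cases (1), (2), or (3) of that lemma. Cases (1) and (2) are precisely the cases $X = G$ and $X = \{pt\}$, which are ruled out by hypothesis. Therefore case (3) holds, i.e.\ $(G,+,X)$ is full. Tracing through the case analysis in the proof of Lemma \ref{L: simple characterization}, fullness is established only in the case where $X$ has finite stabilizer and (some translate of $X$) generates $G$; after translating, we may assume $X$ itself generates $G$, giving conditions (1) and (2) of Theorem \ref{T: main}. Finally, condition (3) of Theorem \ref{T: main} (no proper product decomposition $X = W_1 + W_2$) follows immediately from simplicity of $(G,[X])$, since any such decomposition would yield a non-trivial factorization in the sense of Definition \ref{D: decomposition}.

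The main obstacle is the implication $(1) \Rightarrow (2)$, specifically the verification that $X = G$ and $X = \{pt\}$ cannot geometrically determine $G$. Here the difficulty is purely set-theoretic/algebraic rather than model-theoretic: one must produce enough pathological group automorphisms of $G(K)$, and then show they cannot all factor through the allowed form. The cardinality counting argument from Lemma \ref{L: determines implies simple} should transfer with minimal modification, since in that lemma the argument only uses that one can choose an automorphism that is non-trivial on a single isogeny factor of $G$; when $X$ is trivial (a point or $G$), every group automorphism of $G(K)$ is a candidate, and one has strictly more of these than the available $(f,\sigma)$-pairs.
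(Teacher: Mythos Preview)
Your proposal is correct and follows essentially the same cycle of implications as the paper, with two minor differences worth noting.

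For $(2)\Rightarrow(3)$, the paper argues directly: if $X$ had infinite stabilizer with connected component $H_1$, take a complement $H_2$ and factor $(G,[X])$ via $H_1,H_2$; simplicity forces $H_1=G$, so $X=G$, contradiction (and similarly if $X$ lies in a proper coset). You instead route through the case analysis inside the proof of Lemma \ref{L: simple characterization} to extract the same conclusion. This is legitimate but slightly indirect---you are using the \emph{proof} of that lemma rather than its statement, and the content you extract is exactly the direct argument the paper gives. It would be cleaner to just repeat the two-line complement argument.

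For $(1)\Rightarrow(2)$ in the degenerate cases $X=G$ or $X=\{pt\}$, the paper's hint is that $G(K)\otimes\mathbb Q$ is infinite-dimensional, so the abstract automorphism group of $G(K)$ is enormous and one easily finds automorphisms not of the form $f\circ\sigma$. Your proposed approach---adapting the counting from Lemma \ref{L: determines implies simple}---is in the same spirit, but note that the argument there genuinely uses the two-factor decomposition (fixing the identity on $G_1$ pins down $\sigma$ from $f$). When $G$ is simple you cannot do this, so you really do need the infinite-rank observation (or an equivalent fact about the size of $\operatorname{Aut}(G(K))$ as an abstract group) rather than a direct transplant of that lemma's proof. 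Both the paper and you leave this step as a sketch, so this is not a gap so much as a point where your stated plan would need adjustment.
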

            \begin{proof}
                $(1)\rightarrow(2)$: Suppose $X$ geometrically determines $G$. Then Lemma \ref{L: determines implies simple} gives that $(G,[X])$ is simple, and by Lemma \ref{L: simple characterization}, it suffices to show that $X$ is not a point or all of $G$. But if $X$ is a point or all of $G$, then an automorphism of $(G,+,X)$ is just an abstract group automorphism of $(G,+)$ (possibly required to fix one point). It is then easy to build exotic group automorphisms and conclude that $X$ does not geometrically determine $G$. We leave the details to the reader, as this is likely well-known. The main ingredient is the fact that over an algebraically closed field, the rank of an abelian variety (i.e. the dimension of its tensor product with $\mathbb Q$ as a vector space) is always infinite.
                
                $(2)\rightarrow(3)$: Suppose $(G,[X])$ is simple and $X$ is not a point or all of $G$. It follows easily that $X$ has finite stabilizer and generates $G$, and this shows that the hypotheses of Theorem \ref{T: main} are met. For example, suppose $X$ is stabilized by a non-trivial abelian subvariety $H_1\leq G$ (the case that $X$ fails to generate $G$ is similar). Let $H_2$ be an orthogonal complement of $H_1$ in $G$. Then one can factor $(G,[X])$ using $H_1$ and $H_2$. By simplicity we get $H_1=G$ -- thus $X$ is stabilized by all of $G$, and thus $X=G$, a contradiction.
                
                $(3)\rightarrow(1)$: This is the statement of Theorem \ref{T: main}.
            \end{proof}
            %Lemma \ref{L: simple characterization} allows for a complete characterization of when $X$ geometrically determines $G$: namely, $X$ geometrically determines $G$ if and only if $(G,[X])$ is simple and $X$ is not a single point or all of $G$, if and only if the conditions of Theorem \ref{T: main} are met for the pair $(G,X)$. Indeed:

        \subsection{The Unique Factorization Theorem}

        We now give our most general result on factorizations -- namely, we show that for abelian varieties, the factorization into simple classes is unique.

        \begin{theorem}\label{T: unique factorization}
            Let $G$ be an abelian variety, and $X$ a closed irreducible subvariety. Then the factorization of the weak isogeny class $[G,X]$ into simple classes is unique.
        \end{theorem}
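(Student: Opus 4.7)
The plan is to produce a canonical three-part decomposition of the weak isogeny class $[G,X]$ that reduces the general case to the setting of Lemma \ref{L: uniqueness}. Let $A$ be the connected component of $\operatorname{Stab}(X)$, and let $C$ be the smallest abelian subvariety of $G$ containing some coset of $X$, so that $A \leq C \leq G$. I claim that as weak isogeny classes,
\[
[G,X] \;=\; [A,A] \cdot [C/A,\tilde{X}] \cdot [G/C,\{0\}],
\]
where $\tilde{X} \subseteq C/A$ has finite stabilizer and generates $C/A$. To establish this, apply Poincar\'e reducibility twice. First, choose a complement $G'$ of $A$ in $G$; since $X$ is $A$-stable and irreducible, the top-dimensional component of its pullback along the sum isogeny $A \times G' \to G$ has the form $A \times X''$, with $X''$ corresponding to $\bar{X} := X/A$ under $G' \to G/A$. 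This gives $[G,X] = [A,A] \cdot [G/A,\bar{X}]$. Repeating inside $G/A$ with a complement $D$ of $C/A$, and using that $\bar{X}$ lies in a single coset of $C/A$, one obtains $[G/A,\bar{X}] = [C/A,\tilde{X}] \cdot [D,\{0\}]$ after translation; the class $[D,\{0\}]$ equals $[G/C,\{0\}]$ by Lemma \ref{L: isogeny}, while $\tilde{X}$ generates $C/A$ by minimality of $C$ and has finite stabilizer because $\bar{X}$ does.

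Each of the three pieces admits a unique simple factorization: $[A,A]$ via the classical unique decomposition of the abelian variety $A$ into simple factors (yielding type-(1) simple pairs in the sense of Lemma \ref{L: simple characterization}); $[G/C,\{0\}]$ similarly from $G/C$ (type (2)); and $[C/A,\tilde{X}]$ by Lemma \ref{L: uniqueness} (type (3)). Assembling these yields a simple factorization of $[G,X]$, reproving existence.

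For uniqueness, start with any simple factorization $[G,X] = \prod_i [G_i,X_i]$. By Lemma \ref{L: simple characterization} each factor is of type (1), (2), or (3); and this classification is a weak-isogeny invariant, by the remark following that lemma. Grouping factors by type yields a coarse factorization $[G,X] = [\mathcal{A}_1,\mathcal{A}_1] \cdot [\mathcal{A}_3,Z] \cdot [\mathcal{A}_2,\{0\}]$, where $\mathcal{A}_j$ is the product of the type-$(j)$ factors (and $Z$ generates $\mathcal{A}_3$ with finite stabilizer). Computing the connected stabilizer and the generated subvariety of the distinguished set on the right (each a weak-isogeny invariant, by Remark \ref{R: conditions isogeny invariant}) identifies $\mathcal{A}_1$ with $A$, $\mathcal{A}_3$ with $C/A$, $\mathcal{A}_2$ with $G/C$ (all up to isogeny), and $Z$ with $\tilde{X}$. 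Within each type-group, uniqueness is inherited: the type-(1) and type-(2) subfactorizations reduce to the unique simple decompositions of $A$ and $G/C$ as abelian varieties, while the type-(3) subfactorization is unique by Lemma \ref{L: uniqueness} applied to $(C/A,\tilde{X})$.

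The main obstacle is carrying out the two Poincar\'e-reducibility steps carefully in the weak-isogeny framework: one must verify that the pullback of the distinguished subvariety along each sum isogeny really takes the claimed product form. The first step uses $A$-stability and irreducibility of $X$ to force the top component into the form $A \times X''$, and the second uses that $\bar{X}$ lies in a single coset of $C/A$ to isolate a point on the $G/C$-factor. Lemma \ref{L: isogeny} is invoked throughout to transport the distinguished subvarieties (up to translation) between weakly isogenous pairs, ensuring that all identifications above are valid as equalities of weak isogeny classes rather than just as isogenies of underlying abelian varieties.
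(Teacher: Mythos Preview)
Your proof is correct and follows essentially the same strategy as the paper: both isolate the connected stabilizer (your $A$, the paper's $S$) to peel off the type-(1) factors, then the smallest subvariety whose coset contains $X$ (your $C$, the paper's $R$ after passing to the quotient) to separate the type-(3) core from the type-(2) remainder, reducing to Lemma \ref{L: uniqueness} and the classical unique factorization of abelian varieties. The only difference is presentational---you build the canonical three-term decomposition first via Poincar\'e reducibility and then match an arbitrary factorization against it, whereas the paper starts from an arbitrary simple factorization and extracts the same invariants directly.
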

        \begin{proof}
            Consider a factorization of the pair $(G,[X])$ into $\{(G_i,[X_i])\}$. By Lemma \ref{L: simple characterization}, each $X_i$ is either equal to $G_i$ or has finite stabilizer. Let $S$ be the sum of those $G_i$ for which $X_i=G_i$. It follows that $S$ is the connected component of $\operatorname{Stab}(X)$. In particular, $S$ is determined up to isogeny by the class $[G,X]$ (see Remark \ref{R: conditions isogeny invariant}). Moreover, the factorization of $S$ into the $G_i$ is unique up to isogeny, by the usual unique factorization of abelian varieties.

            Now let $T$ be the sum of those $G_i$ so that $X_i$ has finite stabilizer (so $T$ is a complement of $S$), and let $X_T$ be the sum of the corresponding $X_i$. Then the projection gives an isogeny $(T,[X_T])\rightarrow(G/S,[X/S])$, which means we reduce the problem to uniqueness of factorization for $[T,X_T]$.

            Thus, from now on, we assume $S$ is trivial, and thus each $X_i$ has finite stabilizer.

            Now again by Lemma \ref{L: simple characterization}, each $X_i$ is either a point or generates $G_i$. Let $R$ be the sum of those $G_i$ for which $X_i$ generates $G_i$, and let $P$ be the sum of those $G_i$ such that $X_i$ is a point. Let $X_R$ and $X_P$ be the sums of the corresponding $X_i$. So $X_P$ is one point.
            
            Note, then, that $R$ is the smallest abelian subvariety of $G$ such that $X$ is contained in one coset of $R$. Thus $R$ is determined by the pair $(G,[X])$, and moreover it is easy to see that this definition of $R$ is isogeny invariant. Moreover, the translation class $[X_R]$ is determined as well, because $X_R$ is just a translate of $X$. So, put together, the class $[G_R,X_R]$ is determined by the class $[G,X]$. So a factorization of $[G,X]$ consists of a factorization of $[G_R,X_R]$ together with a factorization of $[G_P,X_P]$. The factorization of $[G_R,X_R]$ is unique by Lemma \ref{L: uniqueness}, and the factorization of $[G_P,X_P]$ is just a factorization of the abelian variety $G_P$ into simple factors, so is unique as well.
        \end{proof}

\appendix

\section{Stability for the Algebraic Geometer}\label{S: crash course}

The proofs of Theorems \ref{T: main} and \ref{T: general} are really arguments in \textit{stability} -- a part of model theory that can be viewed as aimed at developing abstract notions of independence and dimension. The idea is that the structure $(A(K),+,V(K))$ (as in the statement of Theorem \ref{T: main}) has its own `internal' version of dimension theory, parallel to the `external' dimension theory coming from the field $K$. On some small level, one could think of this internal theory as `algebraic geometry over $(A(K),+,V(K))$'. The proof, then, essentially plays the two dimension theories against each other, with the goal of showing that they are really the same.

In fact, important parts of stability were consciously modeled after the dimension theory of algebraic varieties, and many notions from stability can be seen as abstractions of specific notions in algebraic geometry. Since this paper may be of interest to readers in algebraic geometry, we now attempt to explain the main concepts of stability in geometric terms. \textbf{For the model theorist: in precise terms, we will be describing the behavior of $U$-rank in theories of finite Morley rank}. This is by no means a text on stability, and may from time to time prefer clarity over precision. The goal is for the reader to have some intuition for the words that appear repeatedly in the rest of the paper. For a more thorough account of these terms, see the first chapters of \cite{PillayBook} (mainly the first two chapters).  

\subsection{The basic setup}

Throughout, we work in a saturated model $\mathcal M=(M,...)$ of a complete first-order theory of finite Morley rank. This means we have a first-order language (such as $(+,\times)$), and we have chosen a very large model of it (such as an uncountable algebraically closed field), with underlying set $M$. The `finite Morley rank' clause is a strong form of stability. In particular, we remark that any uncountable algebraically closed field, as well as any structure $(A(K),+,V(K))$ (as in Theorem \ref{T: main}) over an uncountable algebraically field, is an example of a model with these properties.

Throughout, the reader should keep in mind the main example, where $\mathcal M=(K,+,\times)$ is an (uncountable) algebraically closed field. We refer to this example as `ACF'. Our goal is to describe various notions from stability by saying what they look like in ACF.

\subsection{Definable Sets} The basic object of model theory is a \textit{definable set}. This is any subset of $M^n$ singled out by a formula in the relevant language, potentially over parameters. For example, in ACF, the line $\{(x,y):y=ax+b\}$ is definable for any $a$ and $b$. If we wish to care about parameters, we may call this line `$(a,b)$-definable' or `definable over $(a,b)$'. In a similar way, if $V\subset K^n$ is any affine algebraic set, defined by polynomials over a field $F\leq K$, then $V$ is $F$-definable. A theorem of Tarski (equivalent to one of Chevalley) says that this is essentially exhaustive -- in ACF, every definable set is a finite Boolean combination of affine algebraic sets (such sets are called \textit{constructible}).

\subsection{Interpretable Sets} In modern years, it is commonplace to augment a structure by new `sorts' representing definable quotient objects (i.e. quotients of definable sets by definable equivalence relations). Quotients like this are called \textit{interpretable sets}. They are treated, for all intents and purposes, like new definable sets, and one can check that `formally adjoining' them to a language does not cause any problems. The union of \textit{all} of the new sorts is called $M^{eq}$ (which serves as a replacement for the `universe' $M$).

In ACF, a canonical example of an interpretable set is a projective algebraic set (i.e. the set of $K$-points of a projective variety). For example, the $K$-points of projective $n$-space are formed by quotienting $K^{n+1}-\{(0,...,0)\}$ (a definable set) by the equivalence relation given by scalings (a definable equivalence relation). And in general, there is a way to view every interpretable set as a \textit{projective constructible set} -- a finite Boolean combination of sets of this form.

\subsection{Complete Types} Definable and interpretable sets are analogous to \textit{classical} algebraic sets -- not to varieties viewed as schemes. The model-theoretic analog of a (\textit{scheme-theoretic}) irreducible variety over a subfield $F\leq K$ is a \textit{complete type over a subset}. This consists of a \textit{sort} $S$ (an ambient space such as projective or affine space of a certain dimension), a \textit{parameter set} $A\subset M^{eq}$ (e.g. the field $F$), and a complete description $p$ of an element of $S$ over $A$ (meaning a choice of which formulas with parameters in $A$ should be true or false). A point $a\in S$ satisfying all the formulas in $p$ is called a \textit{realization of $p$}. One says things like $a\models p$ (read "$a$ models $p$" or "$a$ satisfies $p$")  and $p=\tp(a/A)$. Note that by convention, one only considers types over \textit{small sets} -- sets $A\subset M^{eq}$ with $|A|<|M|$. This ensures that:
\begin{enumerate}
    \item Every complete type has a realization (this is the actual meaning of the term `saturated').
    \item Two points $a$ and $b$ realize the same type over $A$ if and only if they are automorphism conjugate over $A$ (i.e. there is $\sigma\in\operatorname{Aut}(\mathcal M)$ fixing $A$ pointwise and satisfying $\sigma(a)=b$).
\end{enumerate}

(1) and (2) above essentially follow from the fact that $M$ is very large.

In ACF, there is a bijective correspondence between complete types in $K^n$ over $A\subset K$ and irreducible affine varieties over the field $F=\langle A\rangle$ generated by $A$ (and the same holds in projective space for projective varieties; note that irreducible here means over $F$, not geometrically irreducible). This works as follows: given a complete type $p=\tp(a/A)$ in $K^n$, the type $p$ will specify which polynomials from $F[x_1,...,x_n]$ send $a$ to 0. The set of such polynomials is a prime ideal in $F[x_1,...,x_n]$ (and every prime ideal arises in this way). The injectivity of this map follows from Tariski's (and Chevalley's) theorem above.

Note that given a complete type $p$ over $A$, mapping to an irreducible variety $V$ over $F=\langle A\rangle\leq K$, the realizations of $p$ are exactly the points of $V(K)$ lying over the generic point of $V$ over $F$. We call such points \textit{generic points of $V(K)$ over $A$}. There are plenty of non-generic points of $V(K)$ over $A$ as well, and these do not realize the type corresponding to $V$ over $A$ -- each such point realizes the type corresponding to a proper subvariety of $V$. Roughly speaking, this is how model theory deals with the lack of a Zariski topology in abstract settings (we care about generic points of $V(K)$ only, and not the points in their closure). 

\subsection{Non-Forking Extensions} The main technical tool of stability is an analog of the base change operation. The idea is as follows: suppose we have a type $p$ over a field $F$, corresponding to a variety $V$ over $F$. Given an extension $L$ of $F$, how can we identify abstractly the types over $L$ corresponding to the (components of the) base change of $V$ to $L$? The realizations of these types are the generic elements of $V(K)$ over $L$ -- i.e. those realizations of $p$ which remain generic in $V(K)$ over the bigger field $L$. So to make sense of this in the abstract context, one needs to decide abstractly which realizations of a type remain `generic' over a larger parameter set.

The correct definition was worked out by either Shelah or Morley, depending on the level of generality. In our abstract structure $\mathcal M$, one is given a complete type $p$ over $A$, and a new parameter set $B\supset A$. There are, in general, many complete types $q$ over $B$ extending $p$. Shelah (or Morley) identified a distinguished non-empty finite subset of these, called the \textit{non-forking extensions of $p$ over $B$}. We only explain the definition below in the special case of ACF (in general, even in ACF-relics, the definition is much more technical). Non-forking extensions are thought of as generic/free extensions. In ACF, if $p$ corresponds to a variety $V$ over $F=\langle A\rangle$, and $L=\langle B\rangle$, then the non-forking extensions of $p$ over $B$ correspond to the $L$-irreducible components of the base change of $V$ over $L$. Recall that there may be more than one such component (i.e. irreducibility is not preserved over field extensions).

\subsection{Stationarity and Algebraic Closure} Recall that a variety $V$ over $F$ is \textit{geometrically irreducible} if $V$ remains irreducible over $F^{alg}$, the algebraic closure of $F$. In this case, $V$ remains irreducible over any extension of $F$.

The same works in the abstract setup. Given $A\subset M^{eq}$, one defines $\acl(A)$ (the algebraic closure of $A$) to be the set of $b\in M^{eq}$ such that $\tp(b/A)$ has only finitely many realizations. (In ACF, restricted to the `home sort' $K$, this is the same as the field $\langle A\rangle^{alg}$.) If $\acl(A)=A$ then $A$ is called \textit{algebraically closed}.

Now given a complete type $p$ over $A$, one shows the following:

\begin{enumerate}
\item There are only finitely many extensions of $p$ over $\acl(A)$, and moreover all of them are non-forking.
\item If $A$ is algebraically closed, then for any $B\supset A$, there is only one non-forking extension of $p$ over $B$.
\end{enumerate}

One now says that $p$ is \textit{stationary} if for any $B\supset A$, $p$ has only one non-forking extension over $B$.  This extension is called $p_B$. Equivalently, $p$ is stationary if $p$ has a unique extension to $\acl(A)$. So stationary types are the analog of geometrically irreducible varieties.

\subsection{Dimension Theory} In ACF, if $p$ is a complete type over $A$ corresponding to the variety $V$, then a \textit{forking extension} of $p$ (i.e. not non-forking) corresponds to a subvariety of $V$ of smaller dimension (base changed to a bigger field). This allows for the usual definition of dimension (which model theorists call \textit{$U$-rank}). Precisely, one defines $U(p)$ as the supremum of the lengths of all \textit{forking chains} $p=p_0\geq p_1...\geq p_n$ of complete types -- where `forking chain' means that each $p_{i+1}$ is a forking extension of $p_i$. If $a\models p$ and $U(p)=r$, one also says $U(a/A)=r$.  One can then show:

\begin{enumerate}
\item $U(p)$ is always finite.
\item $U(p)=0$ if and only if $p$ has only finitely many realizations. Equivalently, $U(a/A)=0$ if and only if $a\in\acl(A)$.
\end{enumerate}

\subsection{Minimal Types} A complete type $p$ over $A$ is called \textit{minimal} if $p$ is stationary and $U(p)=1$. This is the analog of a geometrically irreducible curve. Just as curves are paramount to algebraic geometry, minimal types are paramount to stability theory: indeed, much of the structural analysis of theories of finite Morley rank works by trying to classify minimal types. The key notion is \textit{Zilber's trichotomy} (see below), which is an old (and ultimately false) conjecture proposing such a classification into three families. A true instance of Zilber's trichotomy is the key ingredient in the background of our proof.

One can also characterize minimal types as follows: $p$ is minimal if and only if every interpretable set $X$ intersects the realizations of $p$ in a finite or cofinite set (i.e. $p$ cannot be split into two infinite pieces). This is analogous to the fact that a subvariety of an irreducible curve is either dense open or of dimension 0.

\subsection{Additivity and Independence}

The most important feature of $U$-rank is the so-called \textit{additivity formula}: for any $a$, $b$, and $A$, one has $$U(a,b/A)=U(a/A)+U(b/Aa)=U(b/A)+U(a/Ab)$$ (here $Aa$ and $Ab$ are shorthand for $A\cup\{a\}$ and $A\cup\{b\}$, respectively). This is a sort of fiber-dimension theorem. The algebraic geometry analog is the fact that the dimension of the generic fiber of a dominant map $f:X\rightarrow Y$ is just $\dim(X)-\dim(Y)$ (the `dominant map' in question is the projection $\tp(a,b/A)\rightarrow\tp(a/A)$ (or $\tp(b/A)$), where e.g. the fiber above $\tp(a/A)$ is $\tp(b/Aa)$).

One now says that $a$ and $b$ are \textit{independent over $A$} if $U(a/Ab)=U(a/A)$. By additivity, this is equivalent to $U(a,b/A)=U(a/A)+U(b/A)$ and also to $U(b/Aa)=U(b/A)$. In ACF, this says that the variety corresponding to $\tp(ab/A)$ is the product of the varieties corresponding to $\tp(a/A)$ and $\tp(b/A)$. Of course, one can extend this to $n$ points, and say that $a_1,...,a_n$ are independent over $A$ if $U(a_1,...,a,_n/A)$ is the sum of the $U(a_i/A)$ -- this is analogous to taking the product of the $n$ varieties corresponding to the $\tp(a_i/A)$.

Note that independent points always exist: that is, given any $a$, $b$, and $A$, there is some $b'\models\tp(b/A)$ with $a$ and $b'$ independent over $A$ -- one chooses $b'$ realizing any non-forking extension of $\tp(a/A)$ over the larger set $Ab$.

\subsection{Orthogonality and Internality} This point is about two notions which act trivially in algebraic geometry, and thus are unique to the abstract setup. On the other hand, these notions are crucial for the general theory, and are among the most essential tools in our proof.

Over an algebraically closed field, every irreducible variety of positive dimension admits a dominant rational map to the affine line. It follows that given two such varieties $X$ and $Y$, there is always a non-trivial definable relation between the $K$-points of $X$ and $Y$ (namely the relation $f(x)=g(y)$, where $f$ and $g$ are rational functions on $X$ and $Y$, respectively). In contrast, in the abstract setting, two definable sets might `have nothing to do with each other'. For example, $\mathcal M$ could just be the disjoint union of two algebraically closed fields $K_1$ and $K_2$, with no relation at all between them. In this case, we say that $K_1$ and $K_2$ are \textit{orthogonal}. Of course, there are less trivial examples -- a good deal of the (quite active) work on the model theory of differential equations involves determining when the solution sets of two equations are orthogonal.

Formally speaking, let $p$ and $q$ be stationary types over $A$ and $B$, respectively. We say $p$ and $q$ are \textit{orthogonal} if for any $C\supset A\cup B$, any realization of $p_C$ is independent from any realization of $q_C$ over $C$. One can extend this by replacing either $p$ or $q$ (or both) with an interpretable set. For example, $p$ and $X$ are orthogonal if $p$ is orthogonal to every stationary type lying in $X$.

An opposite notion is \textit{internality}. Roughly speaking, $p$ is \textit{internal to $q$} (resp. \textit{almost internal to $q$}) if $p$ `can be described entirely (resp. up to finite covers) using $q$' -- formally, potentially modulo a finite set of parameters, every realization of $p$ is definable\footnote{An element $b$ is \textit{definable} over $A$ if $b\in f(A)$ for some $\emptyset$-definable function $f$. Equivalently, if $b$ is the unique realization of $\tp(b/A)$.} (resp. algebraic) over a finite tuple of realizations of $q$. As above, one can get similar notions replacing $p$, $q$, or both with an interpretable set. Geometrically, one thinks of internality as saying there is a rational map $r\rightarrow p$ where $r$ is a type inside $q^n$; while almost internality allows $r$ to be a finite cover of a type in $q^n$. Note that internality and almost internality have the obvious transitivity properties. In ACF, any two types (resp. interpretable sets) are internal to each other.

There is a certain calculus of orthogonality and internality, coming from the characterization of minimal types. This is technical, but also intuitive. For example:
\begin{enumerate}
\item Any two minimal types are either orthogonal or almost internal to each other -- and the latter case induces an equivalence relation on minimal types. 
\item More generally, if $p$ is a minimal type, and $X$ is either a type or an interpretable set, then $p$ is either orthogonal to $X$ or almost internal to $X$ (and cannot be both).
\item If $X$ and $Y$ are either types or interpretable sets, then $X$ and $Y$ are non-orthogonal if and only if there is a minimal type almost internal to both.
\end{enumerate}

\subsection{Zilber's Trichotomy} In the abstract setup, there might not be a very rich structure on a minimal type. For example, $\mathcal M$ might be a pure infinite set with no structure, in which case $M$ itself is (the set of realizations of) a `trivial' minimal type. Other minimal types arise in linear algebra as generic types of affine spaces (equipped only with `affine linear' structure). A good deal of work has gone into sorting out the possible complexity levels of minimal types -- the theme being \textit{Zilber's Trichotomy}.

As stated above, Zilber's trichotomy was a proposed classification of all minimal types into three classes. We will not define these explicitly. The key point is that Zilber isolated a dividing line among minimal types, breaking them into those of `trivial or linear complexity' (called \textit{locally modular}) and those of `non-linear complexity' (called \textit{non-locally modular}). (The `trichotomy' refers to trivial, linear, and non-linear.) In ACF, every minimal type is non-locally modular. Zilber's original conjecture (roughly) said that non-local modularity could only happen in ACF.

For now, we only mention that the equivalence relation on minimal types coming from almost internality/non-orthogonality refines Zilber's suggested classification (that is, two minimal types of different levels must be orthogonal).

\subsection{Groups, Connected Components, and Stabilizers} A \textit{definable group} is a group whose underlying set and group operation are definable (more generally, one could take \textit{interpretable groups}, whose underlying set and operation are \textit{interpretable}). Definable groups are the analog of algebraic groups. In fact, in ACF, every definable (or even interpretable) group is isomorphic to an algebraic group via an interpretable isomorphism.

If $G$ is a definable group, and $p$ is a stationary type in $G$, then $p$ has a \textit{stabilizer}, analogous to the stabilizer of a closed (irreducible) subvariety of an algebraic group. The model theoretic version of the stabilizer is more delicate. Recall that our `subvarieties' are not closed (they are analogous to generic parts of closed sets). Because of this, the proper definition of `stabilizing' is actually `generic stabilizing'. Formally, if $p$ is a type over $A$, then $\operatorname{Stab}(p)$ is the set of $g$ such that $g$ preserves the set of realizations of the non-forking extension $p_g$. The stabilizer is always a definable subgroup of $G$.

Over an algebraically closed field, the connected component of the identity in an algebraic group $G$ is an irreducible normal algebraic subgroup; its cosets form both the irreducible components of $G$ and the connected components of $G$. This works also in the abstract setup. Suppose $G$ is a definable group, whose underlying set and operation are definable over $A$. Then one can isolate a non-empty finite set of `generic types' of $G$ -- complete (stationary) types over $\acl(A)$ inside of $G$ of maximum $U$-rank. The realizations of these types are called the \textit{generic} elements of $G$ over $A$. One shows that $G$ acts transitively on its generic types, and all of the generic types have the same stabilizer, say $G^0\leq G$. $G^0$ is always normal, and is the smallest definable subgroup of $G$ of finite index. It follows that $G^0$ has exactly one generic type, and the other generic types are just the translates of this type to the cosets of $G^0$. We call $G^0$ the \textit{connected component of $G$}. If $G=G^0$ then $G$ is \textit{connected}. This is equivalent to $G$ having only one generic type. In ACF, $G^0$ coincides with the usual connected component of the identity, and the generic types are just the complete types corresponding to the cosets of $G^0$.

\subsection{Stable Embededness} Finally, we discuss a subtle but useful fact about stable theories that plays an essential role in our proof. Often in model theory, one needs to worry about what parameters are needed to define a specific set. A \textit{stably embedded} set is a definable (or interpretable set) $X$ such that, to define subsets of $X^n$, one only needs parameters from $X$ itself. Formally, $X$ is \textit{stably embedded over $A$} if $X$ is $A$-definable, and every definable subset of every $X^n$ is definable over $A\cup X$.

In stable theories, stable embeddedness is automatic: if $X$ is $A$-definable, then $X$ is stably embedded over $A$. The ACF analogy is the fact that, given a fixed algebraic family $X\rightarrow T$ of varieties, each fiber $X_t$ (for $t$ a $K$-rational point of $T$) is determined by finitely many points in $X_t(K)$ (for example, a polynomial $P\in K[x]$ of degree $d$ is determined by its action on $d+1$ distinct points).

            \bibliography{references}
        \bibliographystyle{plain}
    \end{document}